\documentclass{article}
\usepackage{eurosym}
\usepackage[T1]{fontenc}
\usepackage{float,color,ulem}
\usepackage{makeidx}
\usepackage{amssymb}
\usepackage{amsfonts}
\usepackage{amsmath}
\usepackage{hyperref}
\usepackage{caption}
\usepackage{graphicx,multirow}
\usepackage{epsf,epsfig,subfigure,diagbox}
\usepackage[toc,page,title,titletoc,header]{appendix}
\usepackage{indentfirst}
\hypersetup{urlcolor=blue,linkcolor=blue ,citecolor=blue,colorlinks=true}
\usepackage[english]{babel}
\setcounter{MaxMatrixCols}{10}
\usepackage{bbm, dsfont}
\newtheorem{theorem}{Theorem}[section]

\newtheorem{assumption}[theorem]{Assumption}

\newtheorem{definition}[theorem]{Definition}
\newtheorem{example}[theorem]{Example}

\newtheorem{lemma}[theorem]{Lemma}

\newtheorem{proposition}[theorem]{Proposition}

\newtheorem{remark}[theorem]{Remark}

\numberwithin{equation}{section}
\newenvironment{proof}[1][Proof]{\textbf{#1.} }{\ \rule{0.5em}{0.5em}}

\newcommand{\G}{{\rm G}}

\newcommand{\D}{{\rm D}}
\newcommand{\F}{{\rm F}}
\newcommand{\Li}{{\rm L}}
\newcommand{\M}{{\rm M}}
\newcommand{\C}{{\rm C}}
\newcommand{\V}{{\rm V}}
\newcommand{\W}{{\rm W}}
\newcommand{\loc}{{\rm loc}}

\begin{document}

\title{\textbf{{Monotone abstract non-densely defined Cauchy problems applied to age structured population dynamic models}}}

\author{Pierre Magal$^{1}$ Ousmane Seydi$^{2}$ and Feng-Bin Wang$^{3,4}$ \\
{\small $^{1}$ \textit{Univ. Bordeaux, IMB, UMR 5251, F-33400 Talence, France} } \\
{\small \textit{CNRS, IMB, UMR 5251, F-33400 Talence, France.}} \\
{\small $^{2}$ \textit{D\'epartement Tronc Commun,\'Ecole Polytechnique de Thi\`es,
S\'en\'egal}} \\
\small{$^{3}$ Department of Natural Science in the Center for General Education,}\\
\small{ Chang Gung University, Guishan, Taoyuan 333, Taiwan.} \\
\small{$^{4}$ Community Medicine Research Center,}\\
\small{Chang Gung Memorial Hospital, Keelung, Keelung 204, Taiwan.}
}

\maketitle
\begin{abstract}
In this article we first derive some sufficient conditions to establish the monotonicity and comparison principles of the semi-flow generated by non-densely defined Cauchy problems. We apply our results to a class of age structured population models. As a consequence we obtain a monotone semi-flow theory and some comparison principles for age structured models.
\end{abstract}
\vspace{0.2in}\noindent \textbf{Key words}. Semilinear differential equations, non-dense domain, integrated semigroup, monotone semiflow, comparison principle,  age structured population dynamics models.

\vspace{0.1in}\noindent \textbf{AMS Subject Classification}. 37N25, 92D25, 92D30

\tableofcontents

\section{Introduction}

In this article we consider an abstract semi-linear Cauchy problem
\begin{equation}\label{1.1}
\dfrac{du(t)}{dt}=Au(t)+F(t,u(t)), \text{ for } t \geq 0, \text{ with } u(0)=u_0 \in \overline{D(A)},
\end{equation}
where $A:D(A) \subset X \to X$ is a linear operator on a Banach space $X$, and $F:[0, \infty) \times \overline{D(A)} \to X$ is continuous. We assume that the map $x \to F(t,x)$ is Lipschitz on the bounded sets of $\overline{D(A)}$ uniformly with respect to $t$ in a bounded interval of $[0, \infty)$. We point out that $D(A)$ is not necessarily dense in $X$ and $A$ is not necessarily a Hille-Yosida operator.

Population dynamics is one of the interesting subjects in mathematical biology, and a central aim is to study its long-term behavior of the associated models.
Monotonicity methods and comparison principles are the main tools in the investigation of the global dynamics of such model systems. In the existing works, monotonicity methods and comparison arguments with dynamical system approaches have been well developed in ordinary differential equations, delay differential equations, and partial differential equations.
We refer Smith \cite{Smith95}, Hirsch ans Smith \cite{Hirsch-Smith} and Zhao \cite{Zhao2017} for more results and references on this subject. However, very few studies have addressed monotonicity and comparison for system \eqref{1.1}. The existence of the semiflow as well as the positivity of the solutions of \eqref{1.1} has been addressed by Magal and Ruan \cite{Magal-Ruan09a, Magal-Ruan2018}. The condition used in \cite{Magal-Ruan09a, Magal-Ruan2018} is a special case of the so called subtangential condition. In this paper, we intend to further extend this type of analysis to get a comparison principle between non negative solutions of \eqref{1.1}. Thus, we will derive the theory of monotone semiflow, the comparison principle and invariance of solutions for system \eqref{1.1}. Several examples of differential equations, such as delay differential equations \cite{Ducrot-Magal-Ruan2013, Liu-Magal-Ruan}, parabolic equation with non-linear and non local boundary conditions \cite{Chu-Ducrot-Magal-Ruan, Ducrot-Magal-Prevost} can be put into the present framework of non densely abstract Cauchy problem \eqref{1.1}. More examples can be found in \cite{Magal-Ruan2018}. Thus, our developed results in this paper will have a wide range of applicability.

The paper is organized as follows. In sections 2 and 3 we recall some basic results about non densely defined Cauchy problems. In section 4 we prove a result on the monotonicity of the semiflow. Section 5 is devoted to the establishment of  the comparison principle. In the last section, we provide applications to age structured population dynamics models and we refer to the book of Webb \cite{Webb-85book} and Magal and Ruan \cite{Magal-Ruan2018} for more results on this topic.

\section{Preliminary results}
Let $A: D(A) \subset X \to X$ be a linear operator. In the following we use the following notations
$$
X_0:=\overline{D(A)}
$$
and $A_0:D(A_0) \subset X_0 \to X_0$ the part of $A$ in $X_0$ that is
$$
A_0 x=A x, \quad \forall x\in D(A_0),
$$
and
$$
D(A_0)=\lbrace x\in D(A): Ax\in X_0\rbrace.
$$
\begin{assumption} \label{ASS2.2}
We assume that
\begin{itemize}
\item[{\rm (i)}] There exist two constants $\omega_A\in \mathbb{R}$ and $M_A\geq 1$,
such that $(\omega_A,+\infty)\subset \rho (A)$ and
\begin{equation*}
\left\Vert (\lambda I-A)^{-k}\right\Vert _{\mathcal{L}(X_0)}\leq
M_A \left( \lambda -\omega_A \right) ^{-k},\;\forall \lambda >\omega_A,\;k\geq 1.
\end{equation*}
\item[{\rm(ii)}] $\lim_{\lambda \rightarrow +\infty }(\lambda I-A)^{-1}x=0,\
\forall x\in X$.
\end{itemize}
\end{assumption}
It is important to note that Assumption \ref{ASS2.2} does not say that $A$ is a Hille-Yosida linear operator since the operator norm in \textit{i)} is taken into $X_0 \subseteq  X$ (where the inclusion can be strict) instead of $X$. Further, it follows from \cite{Magal-Ruan2018} that $\rho(A)=\rho(A_0)$. Therefore by Assumption \ref{ASS2.2}, $(A_0,D(A_0))$ is a Hille-Yosida linear operator of type $(\omega_A,M_A)$ and generates a strongly continuous semigroup $\lbrace T_{A_0}(t) \rbrace_{t\geq 0}\subset \mathcal{L}(X_0)$ with
\begin{equation*}
\Vert T_{A_0}(t)\Vert_{\mathcal{L}(X_0)}\leq M_A e^{\omega_A t}, \quad \forall t\geq 0.
\end{equation*}
As a consequence
$$
\lim_{\lambda \to + \infty} \lambda \left(\lambda I -A \right)^{-1}x =x
$$
only for $x \in X_0$. It is certainly worth pointing out that the above limit does not exist in general whenever $x$ belongs to $X$.

We summarize the above discussions as follows.
\begin{lemma}\label{lemma2.2} \ Assumption \ref{ASS2.2} is satisfied if and only if
there exist two constants, $M_A\geq 1$ and $\omega_A \in \mathbb{R},$
such that $\left( \omega_A ,+\infty \right) \subset \rho (A)$ and
$A_{0}$ is the infinitesimal generator of a $C_{0}$-semigroup
$\left\{ T_{A_{0}}(t)\right\} _{t\geq 0}$
on $X_{0}$ which satisfies $\left\| T_{A_{0}}(t)\right\| _{\mathcal{L}%
(X_{0})}\leq M_Ae^{\omega_A t},\forall t\geq 0$.
\end{lemma}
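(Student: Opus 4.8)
The plan is to prove the two implications separately, using the Hille-Yosida generation theorem on the Banach space $X_0$ as the pivot, and exploiting the fact recalled in the text that $\rho(A)=\rho(A_0)$ together with the observation that, for $\lambda\in\rho(A)$, the operator $(\lambda I-A_0)^{-1}$ is nothing but the restriction of $(\lambda I-A)^{-1}$ to $X_0$. Throughout I keep in mind that $(\lambda I-A)^{-1}$ maps all of $X$ into $D(A)\subseteq X_0$.

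For the forward implication, assume Assumption \ref{ASS2.2}. Since $(\omega_A,+\infty)\subset\rho(A)$, the operator $A$ is closed, and hence so is its part $A_0$ in $X_0$. The resolvent bound in (i) transfers verbatim to $A_0$: for $x\in X_0$ one has $(\lambda I-A_0)^{-k}x=(\lambda I-A)^{-k}x$, so $\|(\lambda I-A_0)^{-k}\|_{\mathcal{L}(X_0)}\le M_A(\lambda-\omega_A)^{-k}$. The one genuine step is to show that $D(A_0)$ is dense in $X_0$, and this is exactly where (ii) enters. First I would check that for $x\in X_0$ the element $(\lambda I-A)^{-1}x$ lies in $D(A_0)$: it belongs to $D(A)\subseteq X_0$, and $A(\lambda I-A)^{-1}x=\lambda(\lambda I-A)^{-1}x-x\in X_0$. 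Then for $x\in D(A)$ the computation $\lambda(\lambda I-A)^{-1}x-x=(\lambda I-A)^{-1}Ax$, combined with (ii) applied to $Ax\in X$, yields $\lambda(\lambda I-A)^{-1}x\to x$ in $X_0$, with each approximant in $D(A_0)$. Since $\overline{D(A)}=X_0$, density of $D(A_0)$ follows, and the Hille-Yosida theorem produces the semigroup $\{T_{A_0}(t)\}_{t\ge0}$ with the stated exponential bound.

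For the converse, assume $(\omega_A,+\infty)\subset\rho(A)$ and that $A_0$ generates a $C_0$-semigroup obeying $\|T_{A_0}(t)\|_{\mathcal{L}(X_0)}\le M_Ae^{\omega_A t}$. The bound in (i) is recovered by reading the Hille-Yosida resolvent estimate for $A_0$ back on $X_0$ through the identification $(\lambda I-A_0)^{-k}=(\lambda I-A)^{-k}$ on $X_0$. The delicate point is (ii), which must hold for every $x\in X$, whereas the resolvent estimate only controls the resolvent on $X_0$. I would bridge this gap with the resolvent identity: fixing $\mu>\omega_A$ and setting $y:=(\mu I-A)^{-1}x\in D(A)\subset X_0$, one obtains $(\lambda I-A)^{-1}x=y-(\lambda-\mu)(\lambda I-A)^{-1}y$, and then the standard approximation property $\lambda(\lambda I-A)^{-1}y\to y$ in $X_0$ (valid because $A_0$ generates a $C_0$-semigroup) gives $(\lambda-\mu)(\lambda I-A)^{-1}y\to y$, whence $(\lambda I-A)^{-1}x\to 0$.

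I expect the main obstacle to be the derivation of (ii) on all of $X$ in the converse direction: the resolvent estimate is only available on $X_0$, so one cannot argue directly, and the resolvent-identity reduction that pushes an arbitrary $x\in X$ into the range $(\mu I-A)^{-1}X\subset X_0$ is the crucial device. The density argument in the forward direction is the analogous, but more routine, use of (ii).
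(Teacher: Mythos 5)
Your proof is correct and follows the route the paper implicitly takes: the paper offers no written proof of this lemma (it is presented as a summary of the preceding discussion, citing Magal--Ruan for $\rho(A)=\rho(A_0)$ and then invoking the Hille--Yosida theorem for $A_0$), and your forward direction is exactly that argument made explicit, with the density of $D(A_0)$ in $X_0$ correctly extracted from condition (ii) via $\lambda(\lambda I-A)^{-1}x-x=(\lambda I-A)^{-1}Ax$ for $x\in D(A)$. Your converse --- in particular recovering (ii) on all of $X$ by pushing an arbitrary $x\in X$ into $X_0$ through $y=(\mu I-A)^{-1}x$ and the resolvent identity --- is the one genuinely non-routine step, and you handle it correctly; the paper omits this direction entirely, deferring to the cited references.
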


Next, we consider the non  homogeneous Cauchy problem
\begin{equation}\label{2.1}
v^\prime(t)=\ A v(t)+f(t), \quad t\geq0 \quad \text{and} \quad v(0)=v_0\in X_0,
\end{equation}
with $f\in L^{1}_{\loc}(\mathbb{R},X)$.

The integrated semi-group is one of the major tools to investigate non-homogeneous Cauchy problems. This notion was first introduced by Ardent \cite{Arendt87a, Arendt87b}.  We refer to the books Arendt et al. \cite{Arendt01} whenever $A$ an Hille-Yosida operator. We refer to Magal and Ruan \cite{Magal-Ruan07, Magal-Ruan2018} and Thieme \cite{Thieme08} for an integrated semi-group theory whenever $A$ is not Hille-Yosida operator. We also refer to the book of Magal and Ruan \cite{Magal-Ruan2018} for more references and results on this topic.

\begin{definition} Let Assumption \ref{ASS2.2} be satisfied. Then $\left\lbrace S_A(t) \right\rbrace_{t \geq 0} \in \mathcal{L}(X)$ the \textbf{integrated semigroup generated by} $A$ is a strongly continuous family of bounded linear operators  on $X$, which is defined by
$$
S_A(t)x=(\lambda I-A_0) \int_0^t T_{A_{0}}(l) (\lambda I-A)^{-1}x dl, \forall t \geq 0.
$$
\end{definition}

In order to obtain the existence and uniqueness of solutions for \eqref{2.1} whenever $f$ is a continuous map, we will require the following assumption.

\begin{assumption}\label{ASS2.3} Assume that for any $\tau >0$ and $f\in C\left( \left[ 0,\tau %
\right] ,X\right) $ there exists $v_{f}\in C\left( \left[
0,\tau \right] ,X_{0}\right) $ an integrated (mild) solution of
\begin{equation*}
\frac{dv_{f}(t)}{dt}=Av_{f}(t)+f(t),\text{ for }t\geq 0\text{ and }v_{f}(0)=0,
\end{equation*}
that is to say that
$$
\int_0^t v_f(r)dr \in D(A), \ \forall t\geq 0
$$
and
$$
v_f(t)=A\int_0^t v_f(r)dr +\int_0^t f(r)dr , \ \forall t\geq 0.
$$
Moreover we assume that there exists a non decreasing map $\delta
:[0,+\infty) \rightarrow [0,+\infty)$ such that
\begin{equation*}
\Vert v_f(t) \Vert \leq \delta(t) \underset{s\in [0,t]}{\sup} \Vert
f(s)\Vert , \ \forall t\geq 0,
\end{equation*}
with $\delta(t)\rightarrow 0$ as $t\rightarrow 0^+$.
\end{assumption}
\begin{remark}
Note that Assumption \ref{ASS2.3} is equivalent (see \cite{Magal-Ruan09a}) to the assumption that there exists a non-decreasing map $\delta : [0,+\infty)\rightarrow [0,+\infty)$ such that for each $\tau >0$ and each $f\in C\left( \left[ 0,\tau \right] ,X\right)$ the map $t\rightarrow (S_A\ast f)(t)$ is differentiable in $[0,\tau]$ with
$$
\Vert (S_A \diamond f)(t) \Vert\leq \delta(t) \sup_{s\in [0,t]} \Vert f(s) \Vert,\ \forall t\in [0,\tau],
$$
where $(S_A\ast f)(t)$ and $(S_A \diamond f)(t)$ will be defined below in Theorem~\ref{thm2.6} and equation \eqref{2.3}.
\end{remark}
\begin{remark}\label{Remark x.4}
It is important to point out the fact Assumption \ref{ASS2.3} is also equivalent to saying that $\left\lbrace S_A(t)\right\rbrace_{t\geq 0}\subset \mathcal{L}(X,X_0)$ is of bounded semi-variation on $[0,t]$ for any $t>0$. That is to say that
$$
V^\infty(S_A,0,t):=\sup\left\lbrace \left\Vert  \sum_{i=0}^{n-1} [S_A(t_{j+1})-S_A(t_{j})]x_j \right\Vert\right\rbrace <+\infty
$$
where the supremum is taken over all partitions $0=t_0<\dots<t_n=t$ of $[0,t]$ and all elements $x_1,\dots,x_n\in X$ with $\Vert x_j \Vert\leq 1$, for $j=1,2,\ldots,n$.
Moreover the non-decreasing map $\delta : [0,+\infty)\rightarrow [0,+\infty)$ in Assumption \ref{ASS2.3} is defined by
$$
\delta(t):= \sup_{s \in [0,t]} V^\infty(S_A,0,s),\ \forall t\geq 0.
$$
\end{remark}

The following result is proved in \cite[Theorem 2.9]{Magal-Ruan09a}.
\begin{theorem}\label{thm2.6}  Let Assumptions \ref{ASS2.2} and \ref{ASS2.3} be satisfied. Then for each $\tau
>0$ and each $f\in C(\left[ 0,\tau \right] ,X)$ the map
$$
t\rightarrow \left(S_{A}\ast f\right) (t):=\int_{0}^{t}S_{A}(t-s)f(s)ds
$$
is continuously differentiable, $\left( S_{A}\ast
f\right) (t)\in D(A),\forall t\in \left[ 0,\tau \right] ,$ and if we set $%
u(t)=\frac{d}{dt}\left( S_{A}\ast f\right) (t),$ then
\begin{equation*}
u(t)=A\int_{0}^{t}u(s)ds+\int_{0}^{t}f(s)ds,\;\forall t\in \left[ 0,\tau %
\right] .
\end{equation*}
Moreover we have
\begin{equation*}
\left\| u(t)\right\| \leq \delta (t)\sup_{s\in \left[ 0,t\right] }\left\|
f(s)\right\| ,\;\forall t\in \left[ 0,\tau \right] .
\end{equation*}
Furthermore, for each $\lambda \in \left( \omega ,+\infty \right)$ we have
for each $t\in \left[ 0,\tau \right]$ that
\begin{equation} \label{2.2}
\left( \lambda I-A\right) ^{-1}\frac{d}{dt}\left( S_{A}\ast f\right)
(t)=\int_{0}^{t}T_{A_{0}}(t-s)\left( \lambda I-A\right) ^{-1}f(s)ds.
\end{equation}
\end{theorem}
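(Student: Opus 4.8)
The plan is to identify $(S_A\ast f)$ with the primitive of the integrated mild solution $v_f$ supplied by Assumption \ref{ASS2.3}; once this identification is made, every assertion of the theorem reads off immediately. So let $v_f\in C([0,\tau],X_0)$ denote the mild solution of $v'=Av+f$, $v(0)=0$, and set $V(t):=\int_0^t v_f(s)\,ds$. Because $v_f$ is $X_0$-valued and $X_0$ is closed, $V(t)\in X_0$; and the defining relation of $v_f$ gives $V(t)\in D(A)$ with $V'(t)=v_f(t)=AV(t)+\int_0^t f(s)\,ds$. The entire theorem will follow from the identity $(S_A\ast f)(t)=V(t)$ for all $t\in[0,\tau]$.

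To establish this identity I apply the bounded injective operator $R_\lambda:=(\lambda I-A)^{-1}$, $\lambda>\omega_A$, to both sides and check that the images agree. On the right I first record, straight from the definition of $S_A$, the resolvent identity $R_\lambda S_A(t)x=\int_0^t T_{A_0}(\ell)R_\lambda x\,d\ell$; this uses that $R_\lambda(\lambda I-A_0)w=w$ for $w\in D(A_0)$ together with $\int_0^t T_{A_0}(\ell)y\,d\ell\in D(A_0)$ for $y\in X_0$. Pulling the bounded $R_\lambda$ inside the convolution and applying Fubini yields $R_\lambda(S_A\ast f)(t)=\int_0^t (T_{A_0}\ast R_\lambda f)(\sigma)\,d\sigma$. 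On the left I apply $R_\lambda$ to the classical equation for $V$: writing $\phi:=R_\lambda V$ and $F(s):=\int_0^s f$, the relation $R_\lambda AV=\lambda\phi-V$ (valid since $V(s)\in X_0$, which also shows $\phi(s)\in D(A_0)$) turns the equation into $\phi'=A_0\phi+R_\lambda F$ on $X_0$ with $\phi(0)=0$, now genuinely driven by the generator $A_0$. As $\phi$ is a classical solution, the variation-of-constants formula for the $C_0$-semigroup $T_{A_0}$ gives $R_\lambda V(t)=\int_0^t T_{A_0}(t-r)R_\lambda F(r)\,dr$, and a second Fubini exchange rewrites this as $\int_0^t (T_{A_0}\ast R_\lambda f)(\sigma)\,d\sigma$. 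Thus $R_\lambda V(t)=R_\lambda(S_A\ast f)(t)$, and injectivity of $R_\lambda$ gives $V(t)=(S_A\ast f)(t)$. I expect this identity — the reduction to the densely defined generator $A_0$ via the resolvent, keeping track that $f$ is only $X$-valued, and the two Fubini computations matching — to be the main technical obstacle.

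The remaining conclusions are now immediate. Since $v_f$ is continuous, $(S_A\ast f)=V$ is continuously differentiable with $u:=\frac{d}{dt}(S_A\ast f)=v_f$; in particular $(S_A\ast f)(t)=V(t)\in D(A)$, and the mild-solution relation for $v_f$ is precisely $u(t)=A\int_0^t u(s)\,ds+\int_0^t f(s)\,ds$. The estimate $\|u(t)\|=\|v_f(t)\|\le\delta(t)\sup_{s\in[0,t]}\|f(s)\|$ is exactly the bound provided by Assumption \ref{ASS2.3}. Finally, differentiating the identity $R_\lambda(S_A\ast f)(t)=\int_0^t (T_{A_0}\ast R_\lambda f)(\sigma)\,d\sigma$ in $t$ and using boundedness of $R_\lambda$ gives $R_\lambda u(t)=(T_{A_0}\ast R_\lambda f)(t)=\int_0^t T_{A_0}(t-s)R_\lambda f(s)\,ds$, which is \eqref{2.2}.
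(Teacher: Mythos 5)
Your proposal is correct. Note that the paper itself does not prove this statement: it is quoted from Magal and Ruan \cite{Magal-Ruan09a}, Theorem 2.9, so there is no in-text proof to compare against line by line. Your argument is a sound, self-contained derivation whose entire content is the identification $(S_{A}\ast f)(t)=\int_0^t v_f(s)\,ds$, where $v_f$ is the integrated solution supplied by Assumption \ref{ASS2.3}; once that is in place, the membership in $D(A)$, the integrated equation for $u=v_f$, the estimate via $\delta$, and formula \eqref{2.2} all read off exactly as you say. The two pillars of the identification both check out: the resolvent identity $(\lambda I-A)^{-1}S_A(t)x=\int_0^t T_{A_0}(\ell)(\lambda I-A)^{-1}x\,d\ell$ follows from the definition of $S_A$ because $(\lambda I-A)^{-1}x\in D(A)\subset X_0$ and $\int_0^t T_{A_0}(\ell)y\,d\ell\in D(A_0)$ for $y\in X_0$; and on the other side, $\phi=(\lambda I-A)^{-1}V$ is a genuine classical solution of $\phi'=A_0\phi+(\lambda I-A)^{-1}\int_0^{\cdot}f$ (since $A_0\phi=\lambda\phi-V$ is continuous and $V'=v_f$ is continuous), so the variation-of-constants representation for the $C_0$-semigroup $T_{A_0}$ applies, and injectivity of the resolvent closes the loop. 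This resolvent reduction to the densely defined part $A_0$ is the same mechanism that underlies the approximation formula \eqref{2.4} and the original reference's treatment (which is organized instead around regular $f$ plus a density argument using the bound in Assumption \ref{ASS2.3}); your route avoids the density step entirely because Assumption \ref{ASS2.3} already hands you $v_f$ for every continuous $f$. The only cosmetic caveat is that Assumption \ref{ASS2.3} asserts existence, not uniqueness, of $v_f$; your argument is unaffected since you prove the identity for whichever $v_f$ is given, which a posteriori yields uniqueness.
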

From now on we will use the following notation
\begin{equation} \label{2.3}
\left( S_{A} \diamond f\right) (t):=\frac{d}{dt}\left( S_{A}\ast f\right) (t).
\end{equation}
From \eqref{2.2} and using the fact that $\left( S_{A} \diamond f\right) (t) \in X_0 $, we deduce the approximation formula
\begin{equation} \label{2.4}
\left( S_{A} \diamond f\right) (t)= \lim_{\lambda \to + \infty}\int_{0}^{t}T_{A_{0}}(t-s)\lambda \left( \lambda I-A\right) ^{-1}f(s)ds.
\end{equation}
A consequence of the approximation formula is the following
\begin{equation} \label{2.5}
\left( S_{A} \diamond f\right) (t+s)=T_{A_0}(s)\left( S_{A} \diamond f\right) (t)+ \left( S_{A} \diamond f(t+.)\right) (s), \forall t, s \geq 0.
\end{equation}

The following result is proved by Magal and Ruan \cite[Theorem 3.1]{Magal-Ruan07}, which will be constantly used and applied to the operator $A-\gamma B$ in sections 4 and 5.
\begin{theorem}[Bounded Linear Perturbation]~\\
\label{TH2.8}
Let Assumptions \ref{ASS2.2} and \ref{ASS2.3} be satisfied. Assume $L\in \mathcal{%
L}\left( X_{0},X\right) $ is a bounded linear operator.\ Then $%
A+L:D(A)\subset X\rightarrow X$ satisfies the conditions in Assumptions \ref{ASS2.2} and \ref{ASS2.3}. More precisely, if we fix $\tau _{L}>0$ such that
\begin{equation*}
\delta \left( \tau _{L}\right) \left\| L\right\| _{\mathcal{L}\left(
X_{0},X\right) }<1,
\end{equation*}
and if we denote by $\left\{ S_{A+L}(t)\right\} _{t\geq 0}$ the integrated
semigroup generated by $A+L,$ then for any $f\in C\left( \left[ 0,\tau _{L}%
\right] ,X\right)$, we have
\begin{equation*}
\left\| \frac{d}{dt}\left( S_{A+L}\ast f\right) \right\| \leq \frac{\delta
\left( t\right) }{1-\delta \left( \tau _{L}\right) \left\| L\right\| _{%
\mathcal{L}\left( X_{0},X\right) }}\sup_{s\in \left[ 0,t\right] }\left\|
f(s)\right\| ,\;\forall t\in \left[ 0,\tau _{L}\right] .
\end{equation*}
\end{theorem}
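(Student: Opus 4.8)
The plan is to absorb the bounded term $L$ into the forcing and solve the perturbed problem by a contraction argument built on the unperturbed convolution $S_A \diamond \cdot$. For $f \in C([0,\tau_L],X)$ I want the unique $u \in C([0,\tau_L],X_0)$ satisfying $u' = (A+L)u + f$, $u(0)=0$ in the integrated sense. Since $L \in \mathcal{L}(X_0,X)$ and $u$ is $X_0$-valued, the map $s \mapsto Lu(s)+f(s)$ belongs to $C([0,\tau_L],X)$, so by Theorem~\ref{thm2.6} applied to $A$ this is equivalent, using the linearity of $g \mapsto S_A\diamond g$ and notation~\eqref{2.3}, to the fixed point identity
$$u(t) = \left(S_A \diamond (Lu(\cdot)+f(\cdot))\right)(t) = \left(S_A \diamond Lu\right)(t) + \left(S_A \diamond f\right)(t).$$
(The equivalence is just Duhamel: $\int_0^t u \in D(A)\subset X_0$, so $L$ commutes with the integral and $A\int_0^t u + L\int_0^t u = (A+L)\int_0^t u$.)

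Next I set up the contraction. Define the bounded linear operator $\mathcal{K}:C([0,\tau_L],X_0)\to C([0,\tau_L],X_0)$ by $(\mathcal{K}w)(t):=(S_A \diamond Lw)(t)$; Theorem~\ref{thm2.6} and the monotonicity of $\delta$ give $\|\mathcal{K}w\|_\infty \le \delta(\tau_L)\|L\|_{\mathcal{L}(X_0,X)}\|w\|_\infty$. The hypothesis $\delta(\tau_L)\|L\|_{\mathcal{L}(X_0,X)}<1$ makes $I-\mathcal{K}$ invertible, so the fixed point $u=(I-\mathcal{K})^{-1}(S_A\diamond f)$ exists and is unique. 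For the quantitative bound I bootstrap on the identity itself: setting $\phi(t):=\sup_{s\in[0,t]}\|u(s)\|$, Theorem~\ref{thm2.6} yields $\|u(t)\| \le \delta(t)\sup_{s\le t}\|f(s)\| + \delta(t)\|L\|_{\mathcal{L}(X_0,X)}\phi(t)$; since the right-hand side is nondecreasing in $t$ I may replace $\|u(t)\|$ by $\phi(t)$ and solve the scalar inequality, using $\delta(t)\|L\|\le\delta(\tau_L)\|L\|<1$, to get
$$\|u(t)\| \le \frac{\delta(t)}{1-\delta(\tau_L)\|L\|_{\mathcal{L}(X_0,X)}}\sup_{s\in[0,t]}\|f(s)\|, \quad t\in[0,\tau_L].$$
This is the asserted bound once $u$ is identified with $\tfrac{d}{dt}(S_{A+L}\ast f)$, and it simultaneously verifies Assumption~\ref{ASS2.3} for $A+L$, the new $\delta$ still tending to $0$ as $t\to0^+$.

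The remaining and most delicate task is to check that $A+L$ satisfies Assumption~\ref{ASS2.2}, so that $S_{A+L}$ and $\tfrac{d}{dt}(S_{A+L}\ast f)$ are legitimately defined. The obstacle is that $L$ ranges in $X$, not $X_0$, so classical bounded perturbation of $C_0$-semigroups on $X_0$ does not apply and one cannot estimate $(\lambda I-A)^{-1}L$ from the resolvent bound of Assumption~\ref{ASS2.2}(i) alone, which lives in $\mathcal{L}(X_0)$. The way around is to control $(\lambda I-A)^{-1}$ as a map $X\to X_0$ via the integrated-semigroup content of Assumption~\ref{ASS2.3}: by Remark~\ref{Remark x.4} the bounded semivariation gives $\|S_A(t)x\|\le\delta(t)\|x\|$, and the Laplace representation $(\lambda I-A)^{-1}x=\lambda\int_0^{\infty}e^{-\lambda t}S_A(t)x\,dt$ then yields
$$\|(\lambda I-A)^{-1}\|_{\mathcal{L}(X,X_0)} \le \lambda\int_0^{\infty}e^{-\lambda t}\delta(t)\,dt,$$
whose right-hand side tends to $0$ as $\lambda\to+\infty$ (rescale $r=\lambda t$ and use $\delta(t)\to0$ with dominated convergence, valid for $\lambda$ beyond the exponential growth rate of $\delta$).

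Consequently $\|(\lambda I-A)^{-1}L\|_{\mathcal{L}(X_0)} \le \|L\|_{\mathcal{L}(X_0,X)}\|(\lambda I-A)^{-1}\|_{\mathcal{L}(X,X_0)}<1$ for $\lambda$ large, and the factorization $\lambda I-(A+L)=(\lambda I-A)(I-(\lambda I-A)^{-1}L)$ on $D(A)$ with a Neumann series gives $(\omega',+\infty)\subset\rho(A+L)$ together with $(\lambda I-(A+L))^{-1}=(I-(\lambda I-A)^{-1}L)^{-1}(\lambda I-A)^{-1}$; Assumption~\ref{ASS2.2}(ii) and the Hille--Yosida power estimate on $X_0$ follow from the same bookkeeping, so by Lemma~\ref{lemma2.2} the part of $A+L$ in $X_0$ generates a $C_0$-semigroup and $S_{A+L}$ exists. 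Finally, since $\tfrac{d}{dt}(S_{A+L}\ast f)$ solves the same integrated problem by Theorem~\ref{thm2.6} applied to $A+L$, uniqueness of the fixed point forces $u=\tfrac{d}{dt}(S_{A+L}\ast f)$, which closes the argument. I expect the resolvent/semivariation step bridging $X$ and $X_0$ to be the crux, the contraction and Neumann-series bookkeeping being comparatively routine.
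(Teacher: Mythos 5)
First, a remark on scope: the paper does not prove Theorem~\ref{TH2.8} at all; it quotes it from \cite[Theorem 3.1]{Magal-Ruan07}. Your proposal reconstructs what is essentially that proof: the fixed-point identity $u=S_A\diamond(Lu+f)$, the contraction $\mathcal{K}$ with constant $\delta(\tau_L)\Vert L\Vert_{\mathcal{L}(X_0,X)}<1$, the bootstrap on $\phi(t)=\sup_{s\le t}\Vert u(s)\Vert$ yielding the stated bound, and the identification of the fixed point with $\tfrac{d}{dt}(S_{A+L}\ast f)$ by uniqueness are all correct and are the standard route. Two steps, however, need repair.

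(1) The claim that the Hille--Yosida power estimate in Assumption~\ref{ASS2.2}(i) for $A+L$ ``follows from the same bookkeeping'' is not right: the Neumann-series factorization gives at best $\Vert(\lambda I-(A+L))^{-1}\Vert_{\mathcal{L}(X_0)}\le M_A(1-\varepsilon)^{-1}(\lambda-\omega_A)^{-1}$, and iterating produces the constant $(M_A/(1-\varepsilon))^{k}$, which blows up with $k$; a first-power resolvent bound never implies the uniform-in-$k$ estimate. The correct route --- and you already have the tool in hand, since you invoke Lemma~\ref{lemma2.2} --- is to bypass the power estimate entirely: construct the semigroup generated by the part of $A+L$ in $X_0$ as the fixed point of $V(t)x=T_{A_0}(t)x+\bigl(S_A\diamond LV(\cdot)x\bigr)(t)$ on $[0,\tau_L]$ (the same contraction $\mathcal{K}$), extend by the semigroup property, check the exponential bound, and then Lemma~\ref{lemma2.2} delivers Assumption~\ref{ASS2.2} for $A+L$. (2) Your Laplace-transform argument for $\Vert(\lambda I-A)^{-1}\Vert_{\mathcal{L}(X)}\to 0$ tacitly assumes $\int_0^\infty e^{-\lambda t}\delta(t)\,dt<\infty$, i.e.\ that $\delta$ grows at most exponentially; Assumption~\ref{ASS2.3} controls $\delta$ only near $t=0$ and gives no growth bound at infinity. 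You do not need to prove this statement: it is exactly the unnumbered lemma the paper records immediately after Theorem~\ref{TH2.8}, quoted from \cite[Lemma 2.13]{Magal-Ruan09a}, so cite it. (If you insist on a proof, split the Laplace integral at a fixed $t_0$ and control the tail via the identity $S_A(t+s)=S_A(s)+T_{A_0}(s)S_A(t)$ and the $\mathcal{L}(X_0)$ growth bound of $T_{A_0}$.) With these two repairs your argument is complete and coincides with the proof in \cite{Magal-Ruan07}.
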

The following result is proved in \cite[Lemma 2.13]{Magal-Ruan09a}.
\begin{lemma}  Let Assumptions \ref{ASS2.2} and \ref{ASS2.3} be satisfied. Then
$$
\lim_{\lambda \to + \infty} \Vert \left( \lambda I -A \right)^{-1} \Vert_{\mathcal{L}(X)}=0.
$$
\end{lemma}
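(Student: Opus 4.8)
The crux of the statement is that Assumption \ref{ASS2.2}(ii) only provides strong (pointwise) convergence $(\lambda I-A)^{-1}x\to 0$, whereas the claim asserts convergence in the operator norm of $\mathcal{L}(X)$. The plan is to upgrade pointwise convergence to uniform convergence by exploiting the one genuinely uniform estimate available in this non-densely defined setting, namely the bound on the integrated semigroup $\{S_A(t)\}_{t\ge 0}$ coming from Assumption \ref{ASS2.3}, together with a Laplace-transform representation of the resolvent in terms of $S_A$.

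First I would record the uniform bound $\sigma(t):=\|S_A(t)\|_{\mathcal{L}(X,X_0)}\le\delta(t)$ for all $t\ge 0$. This is immediate from the semi-variation characterization in Remark \ref{Remark x.4}: taking the trivial partition $0=t_0<t_1=t$ together with an arbitrary $x\in X$, $\|x\|\le 1$, and using $S_A(0)=0$ gives $\|S_A(t)x\|=\|[S_A(t)-S_A(0)]x\|\le V^\infty(S_A,0,t)\le\delta(t)$. (Equivalently, one applies Theorem \ref{thm2.6} to the constant map $f\equiv x$, for which $(S_A\diamond f)(t)=S_A(t)x$.) Since $\delta$ is non-decreasing with $\delta(t)\to 0$ as $t\to 0^+$, this is exactly the uniform-in-$x$ decay near $t=0$ that will drive the argument.

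Next I would establish, for $\lambda>\omega_A$, the resolvent representation
\begin{equation*}
(\lambda I-A)^{-1}x=\lambda\int_0^\infty e^{-\lambda t}S_A(t)x\,dt,\qquad x\in X.
\end{equation*}
Fixing $\lambda_0>\omega_A$ and setting $y=(\lambda_0 I-A)^{-1}x\in X_0$, the definition of $S_A$ and the identity $A_0\int_0^t T_{A_0}(l)y\,dl=(T_{A_0}(t)-I)y$ yield $S_A(t)x=\lambda_0\int_0^t T_{A_0}(l)y\,dl-(T_{A_0}(t)-I)y$. Taking Laplace transforms term by term (Fubini together with the Laplace formula $\lambda(\lambda I-A_0)^{-1}y$ for $T_{A_0}$) and simplifying by means of the resolvent identity and $\rho(A)=\rho(A_0)$ collapses the right-hand side to $(\lambda I-A)^{-1}x$. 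The same explicit formula for $S_A(t)x$, with $\|y\|\le\|(\lambda_0 I-A)^{-1}\|_{\mathcal{L}(X)}\|x\|$ and $\|T_{A_0}(t)\|_{\mathcal{L}(X_0)}\le M_A e^{\omega_A t}$, shows $\sigma(t)\le C(1+t)e^{\omega_A^{+}t}$, so the integral converges for large $\lambda$ and the tail is controlled.

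Combining the two steps gives $\|(\lambda I-A)^{-1}\|_{\mathcal{L}(X)}\le\lambda\int_0^\infty e^{-\lambda t}\sigma(t)\,dt$. To finish, fix $\varepsilon>0$ and choose $\eta>0$ with $\delta(t)\le\varepsilon$ on $[0,\eta]$; splitting the integral at $\eta$, the contribution over $[0,\eta]$ is at most $\varepsilon\,\lambda\int_0^\infty e^{-\lambda t}\,dt=\varepsilon$ via $\sigma\le\delta$, while the tail over $[\eta,\infty)$ is bounded using $\sigma(t)\le C(1+t)e^{\omega_A^{+}t}$ and tends to $0$ as $\lambda\to+\infty$ because of the factor $e^{-(\lambda-\omega_A^{+})\eta}$. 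Letting $\lambda\to\infty$ and then $\varepsilon\to 0$ yields the claim. I expect the main obstacle to be the careful justification of the Laplace representation in the non-Hille--Yosida setting—tracking where arguments lie in $X$ versus $X_0$, why $(\lambda_0 I-A)^{-1}x$ need not belong to $D(A_0)$, and why the term-by-term Laplace inversion is legitimate—whereas the final concentration-of-mass estimate is routine.
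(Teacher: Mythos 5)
The paper does not prove this lemma itself (it quotes \cite{Magal-Ruan09a}), and your argument is correct and follows essentially the standard route taken in that reference: the uniform bound $\Vert S_A(t)\Vert_{\mathcal{L}(X,X_0)}\le\delta(t)$ extracted from Assumption \ref{ASS2.3}, the Laplace representation $(\lambda I-A)^{-1}x=\lambda\int_0^{\infty}e^{-\lambda t}S_A(t)x\,dt$, and the splitting of the integral at a small time where $\delta\le\varepsilon$. The delicate points are handled correctly---in particular $y=(\lambda_0 I-A)^{-1}x$ lies in $X_0$ but in general not in $D(A_0)$, and the identity $A_0\int_0^tT_{A_0}(l)y\,dl=T_{A_0}(t)y-y$ requires only $y\in X_0$---so the proof is complete as written.
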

It follows that if $B \in \mathcal{L}(X_0,X)$, then for all $\lambda>0$ large enough the linear operator $\lambda I -A -B$ is invertible and its inverse can be written as follows
$$
\left(\lambda I -A -B \right)^{-1}=\left(\lambda I -A \right)^{-1}\left[ I-B\left(\lambda I -A \right)^{-1}\right]^{-1}.
$$
\section{Existence and Uniqueness of a Maximal Semiflow}
Consider now the non-autonomous semi-linear Cauchy problem
\begin{equation}\label{2.6}
U(t,s)x=x+A\int_{s}^{t}U(l,s)xdl+\int_{s}^{t}F(l,U(l,s)x)dl,\;\;\text{{}}%
t\geq s\geq 0,
\end{equation}
and the following problem
\begin{equation}
U(t,s)x=T_{A_{0}}(t-s)x+\frac{d}{dt}(S_{A}\ast F(.+s,U(.+s,s)x)(t-s),\text{ }%
t\geq s\geq 0.   \label{2.7}
\end{equation}%
We will make the following assumption.

\begin{assumption}  \label{ASS2.10} Assume that $F:\left[ 0,+\infty \right) \times \overline{D(A)}\rightarrow X$
is a continuous map such that for each $\tau _{0}>0$ and each $\xi >0,$
there exists $K(\tau _{0},\xi )>0$ such that
\begin{equation*}
\left\| F(t,x)-F(t,y)\right\| \leq K(\tau _{0},\xi )\left\| x-y\right\|
\end{equation*}
whenever $t\in \left[ 0,\tau _{0}\right] ,$\textit{\ }$y,x\in X_{0},$ and $%
\left\| x\right\| \leq \xi ,\left\| y\right\| \leq \xi .$
\end{assumption}
In the following definition $\tau $ is the blow-up time of maximal solutions
of (\ref{2.6}).
\begin{definition}[Non autonomous maximal semiflow]
\label{DE5.1} ~\\ Consider two maps $\tau :\left[ 0,+\infty \right)
\times X_{0}\rightarrow \left( 0,+\infty \right] $ and $U:D_{\tau
}\rightarrow X_{0},$ where
$$
D_{\tau }=\left\{ (t,s,x)\in \left[ 0,+\infty
\right) ^{2}\times X_{0}:s\leq t<s+\tau \left( s,x\right) \right\}.
$$
We say that $U$ is \textbf{ a maximal non-autonomous semiflow on} $X_{0}$ if $U$
satisfies the following properties
\begin{itemize}
\item[{\rm(i)}] $\tau \left( r,U(r,s)x\right) +r=\tau \left( s,x\right)
+s,\forall s\geq 0,\forall x\in X_{0},\forall r\in \left[ s,s+\tau \left(
s,x\right) \right)$.

\item[{\rm(ii)}] $U(s,s)x=x,\forall s\geq 0,\forall x\in X_{0}$.

\item[{\rm(iii)}]$U(t,r)U(r,s)x=U(t,s)x,\forall s\geq 0,\forall x\in
X_{0},\forall t,r\in \left[ s,s+\tau \left( s,x\right) \right) $ with $t\geq
r.$

\item[{\rm(iv)}] If $\tau \left( s,x\right) <+\infty ,$ then
\begin{equation*}
\lim_{t\rightarrow \left( s+\tau \left( s,x\right) \right) ^{-}}\left\|
U(t,s)x\right\| =+\infty .
\end{equation*}
\end{itemize}
\end{definition}

Set
\begin{equation*}
D=\left\{ \left( t,s,x\right) \in \left[ 0,+\infty \right) ^{2}\times
X_{0}:t\geq s\right\} .
\end{equation*}
The following theorem is the main result in this section, which was proved
in \cite[Theorem 5.2]{Magal-Ruan07}.
\begin{theorem} \label{TH2.12}
Let Assumptions \ref{ASS2.2}, \ref{ASS2.3} and \ref{ASS2.10} be satisfied. Then there exists a map $\tau
:\left[ 0,+\infty \right) \times X_{0}\rightarrow \left( 0,+\infty \right] $
and a maximal non-autonomous semiflow $U:D_{\tau }\rightarrow X_{0},$ such
that for each $x\in X_{0}$ and each $s\geq 0,$ $U(.,s)x\in C\left( \left[
s,s+\tau \left( s,x\right) \right) ,X_{0}\right) $ is a unique maximal
solution of (\ref{2.6}) (or equivalently a unique maximal solution of (\ref{2.7})). Moreover, $D_{\tau }$ is open in $D$ and the map $\left(
t,s,x\right) \rightarrow U(t,s)x$ is continuous from $D_{\tau }$ into $%
X_{0}. $
\end{theorem}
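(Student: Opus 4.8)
The plan is to recast \eqref{2.7} as a fixed point equation, solve it locally by the Banach contraction principle, continue the local solutions to a unique maximal one, and then read off the semiflow properties from uniqueness and the continuous dependence estimates.

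First I would fix $s\geq 0$ and $x\in X_0$, and after the time shift $v(t):=U(t+s,s)x$ rewrite \eqref{2.7} on an interval $[0,\tau_0]$ as the fixed point problem $v=\Phi(v)$, where
\[
\Phi(v)(t):=T_{A_0}(t)x+\left(S_A\diamond F(\cdot+s,v(\cdot))\right)(t).
\]
By Theorem~\ref{thm2.6}, $\Phi$ maps $C([0,\tau_0],X_0)$ into itself, and for $v_1,v_2$ staying in a closed ball $\overline{B}(x,r)\subset X_0$ the cancellation of the $T_{A_0}(t)x$ terms together with the bound $\|(S_A\diamond f)(t)\|\leq\delta(t)\sup_{[0,t]}\|f\|$ and Assumption~\ref{ASS2.10} gives $\|\Phi(v_1)(t)-\Phi(v_2)(t)\|\leq\delta(t)\,K(\tau_0,\xi)\sup_{[0,t]}\|v_1-v_2\|$ with $\xi=\|x\|+r$. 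Since $\delta(t)\to 0$ as $t\to 0^+$ and $T_{A_0}$ is strongly continuous, choosing $\tau_0=\tau_0(\xi)$ small enough makes $\Phi$ a contraction that preserves a suitable closed ball, yielding a unique local solution. The crucial point I would record is that $\tau_0$ depends on $x$ only through the bound $\xi$, so the local existence time is uniform on bounded subsets of $X_0$.

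Next I would define $\tau(s,x)$ as the supremum of existence times and glue the unique local solutions into a maximal $U(\cdot,s)x\in C([s,s+\tau(s,x)),X_0)$; equivalence with \eqref{2.6} is immediate from Theorem~\ref{thm2.6} applied to $f(t)=F(t,U(t,s)x)$. The blow-up alternative (iv) follows by contradiction: if $\tau(s,x)<+\infty$ while $\limsup_{t\to(s+\tau(s,x))^-}\|U(t,s)x\|<+\infty$, the uniform local existence time lets one restart the equation from a time near $s+\tau(s,x)$ and prolong the solution, contradicting maximality. The identities (ii) and (iii) come from uniqueness: $U(s,s)x=x$ is read off at $t=s$, while $U(t,r)U(r,s)x$ and $U(t,s)x$ both solve \eqref{2.7} with the same data at time $r$ and hence coincide; property (i) then expresses the compatibility of maximal existence times under this cocycle relation.

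Finally, for continuity of $(t,s,x)\mapsto U(t,s)x$ and openness of $D_\tau$ in $D$, I would subtract the integral equations for two solutions and use Theorem~\ref{thm2.6} with the Lipschitz bound to obtain, on any compact interval on which both solutions stay bounded, a Gronwall-type control of $\sup\|U(\cdot,s_1)x_1-U(\cdot,s_2)x_2\|$ by $\|x_1-x_2\|$ and $|s_1-s_2|$, the dependence on $s$ entering through the shifted nonlinearity and the strong continuity of $T_{A_0}$. I expect the main obstacle to be here: one must combine this local Lipschitz dependence with the uniform local existence time to show that if $(t_0,s_0,x_0)\in D_\tau$ then nearby triples remain in $D_\tau$ with solutions staying uniformly bounded on $[s_0,t_0]$, so that no premature blow-up occurs. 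A covering argument along $[s_0,t_0]$ that propagates the uniform bounds step by step is exactly what forces $D_\tau$ to be open and $U$ to be continuous up to the blow-up time, and would close the proof.
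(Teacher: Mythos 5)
Note first that the paper does not prove this theorem: it is quoted verbatim from Magal and Ruan \cite{Magal-Ruan07}, Theorem 5.2, so there is no in-paper proof to compare against. Your outline is, in substance, the argument used there: a fixed-point formulation of \eqref{2.7} via the key estimate $\Vert (S_A\diamond f)(t)\Vert \leq \delta(t)\sup_{[0,t]}\Vert f\Vert$ from Theorem~\ref{thm2.6}, a contraction on a short interval using Assumption~\ref{ASS2.10}, a local existence time controlled only by $\Vert x\Vert$, continuation to a maximal solution with the blow-up alternative, the semiflow identities from uniqueness, and a Gronwall-plus-covering argument for continuity and the openness of $D_\tau$. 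The equivalence of \eqref{2.6} and \eqref{2.7} via Theorem~\ref{thm2.6} applied to $f(t)=F(t,U(t,s)x)$ is also the intended route.

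One technical point deserves correction, because it is exactly the point you (rightly) call crucial. If you run the contraction on the ball $\overline{B}(x,r)$ of \emph{constant} center $x$ in $C([0,\tau_0],X_0)$, the invariance of that ball requires $\Vert T_{A_0}(t)x-x\Vert$ to be small uniformly for $\Vert x\Vert\leq\xi$, and strong continuity of $T_{A_0}$ gives this only on compact sets, not on bounded sets; so as written the claim that $\tau_0$ depends on $x$ only through $\xi$ does not follow. The standard repair (and the one used in the cited proof) is to work instead on the tube
$$
E_{\tau_0}=\bigl\{ v\in C([0,\tau_0],X_0):\ \Vert v(t)-T_{A_0}(t)x\Vert\leq r,\ \forall t\in[0,\tau_0]\bigr\},
$$
for which $\Vert \Phi(v)(t)-T_{A_0}(t)x\Vert=\Vert (S_A\diamond F(\cdot+s,v))(t)\Vert\leq\delta(t)\sup\Vert F\Vert$ contains no semigroup term, and $\Vert v(t)\Vert\leq M_Ae^{\omega_A^{+}\tau_0}\Vert x\Vert+r=:\xi$ gives the uniform bound feeding $K(\tau_0,\xi)$. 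With this modification the uniformity on bounded sets is genuine, and the rest of your argument (restarting near the endpoint for the blow-up alternative, and the step-by-step propagation along $[s_0,t_0]$ for openness of $D_\tau$) goes through; note also that to prove property (iv) in Definition~\ref{DE5.1} you should negate the full limit, i.e.\ assume $\liminf_{t\to(s+\tau(s,x))^-}\Vert U(t,s)x\Vert<+\infty$ and restart along a bounded sequence of times, rather than assuming the $\limsup$ is finite.
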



\section{Positivity and Monotonicity for a maximal semiflow}
In order to define a partial order on the Banach space $X$, we need to consider $X_+$ the positive cone of $X$. That is to say that $X_+$ is a closed convex subset of $X$ satisfying the two following properties
\begin{itemize}
\item[(i)] $\lambda x \in X_+, \; \forall \lambda \geq 0 , \forall x \in X_+$;
\item[(ii)] If $x \in X_+$ and $-x \in X_+$ then $x=0$.
\end{itemize}
Then the partial $\geq$ on $X$ is defined as follows
$$
x \geq 0 \Leftrightarrow x \in X_+.
$$
Thus, this partial order serves to compare the elements of $X$ as follows
$$
x \geq y \Leftrightarrow x-y \in X_+.
$$
The partial order on $X$ induces a partial order on $X_0$ which corresponds to the positive cone
$$
X_{0+}=X_0 \cap X_+.
$$
\begin{assumption}[Positivity] \label{ASS3.0} We assume that there exists a bounded linear operator $B \in \mathcal{L}(X_0,X)$ satisfying the two following properties

\begin{itemize}
\item[{\rm(i)}] For each $\gamma>0$ the linear operator $A-\gamma B$ is resolvent positive. That is to say that
$$
\left(\lambda I-(A- \gamma B) \right)^{-1} X_+ \subset X_+
$$
for each $\lambda >0$ large enough.
\item[\rm{(ii)}] For each $\xi >0$ and each $\sigma>0$ there exists $\gamma=\gamma(\xi,\sigma)>0$, such that
$$
 x \geq 0 \Rightarrow F(t,x)+\gamma Bx \geq 0
$$
whenever $x \in X_0$,  $ \Vert x \Vert \leq \xi $ and $t \in [0, \sigma]$.
\end{itemize}
\end{assumption}

\begin{remark} \label{REM3.3} For densely defined Cauchy problems, the standard conditions related to the positivity of solutions can be obtained if we take $B=I$, the identity, in Assumption \ref{ASS3.0}. We also note that the positivity for a delay differential equation was discussed in \cite[Example 3.6]{Magal-Ruan09a}, where $B$ is not equal to the identity.
\end{remark}

Assumption \ref{ASS3.0} ${\rm(i)}$ guarantees that the semigroup $\left\{ T_{\left( A- \gamma B \right)_{0}}(t)\right\} _{t\geq 0}$ is a $C_0$-semigroup of positive operators on $X_{0}$. By using the classical semigroup approximation formula
$$
 T_{\left( A- \gamma B \right)_{0}}(t)x=\lim_{n \to \infty} \left( \dfrac{n}{t} \right)^n
 \left( \dfrac{n}{t} -(A- \gamma B) \right)^{-n}x \geq 0, \forall t >0, \forall x \in X_{ 0+}.
$$
As a consequence of formula \eqref{2.4} we deduce that for each $t \in [0, \tau]$,
$$
\left( S_{(A-\gamma B)} \diamond f \right)(t)=\lim_{\lambda \to \infty} \int_0^t T_{\left( A- \gamma B \right)_{0}}(t-s) \lambda \left(\lambda I-(A- \gamma B) \right)^{-1} f(s)ds \geq 0,
$$
whenever $f \in C\left([0, \tau],X_+ \right)$. As a consequence
\begin{equation}\label{eq3.1}
f \leq g \Rightarrow \left( S_{(A-\gamma B)} \diamond f \right) \leq \left( S_{(A-\gamma B)} \diamond g \right),
\end{equation}
whenever $g-f \in C\left([0, \tau],X_+ \right)$ and $f \in C\left([0, \tau],X \right)$.

The following result has been studied in \cite[Proposition 3.5]{Magal-Ruan09a}.
\begin{theorem}[Positive semiflow]\label{TH3.1}~\\
Let Assumptions \ref{ASS2.2}, \ref{ASS2.3}, \ref{ASS2.10} and \ref{ASS3.0} be satisfied. Then the maximal semiflow generated by (\ref{2.6}) is non negative. That is to say that for each $x \in X_0$ and $s \geq 0$
$$
x \geq 0  \Rightarrow  U(t,s)x \geq 0,
$$
whenever $t \in \left[s,  s+\tau \left( s,x\right) \right)$.
\end{theorem}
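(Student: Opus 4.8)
The plan is to reduce the nonlinear positivity statement to the two linear positivity facts already assembled in the excerpt: that $\{T_{(A-\gamma B)_0}(t)\}_{t\ge 0}$ is a positive $C_0$-semigroup on $X_0$, and that the convolution operator $f\mapsto (S_{(A-\gamma B)}\diamond f)$ maps nonnegative inputs to nonnegative outputs (formula \eqref{eq3.1}). The key algebraic move is to rewrite the fixed-point equation \eqref{2.7} for the operator $A-\gamma B$ rather than $A$. Concretely, I would add and subtract $\gamma B U(t,s)x$ inside the nonlinear term, so that $F(t,u)$ becomes $\big(F(t,u)+\gamma B u\big)-\gamma B u$, absorbing the $-\gamma B u$ into the linear part. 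By Theorem \ref{TH2.8} the perturbed operator $A-\gamma B$ again satisfies Assumptions \ref{ASS2.2} and \ref{ASS2.3}, so the integrated semigroup $S_{(A-\gamma B)}$ and the associated variation-of-constants formula are all available.

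First I would fix $s\ge 0$ and $x\in X_{0+}$, and work on a small time interval $[s,s+\tau_0]$ on which the maximal solution stays bounded, say $\|U(t,s)x\|\le \xi$. Then I would invoke Assumption \ref{ASS3.0}(ii) to choose $\gamma=\gamma(\xi,\sigma)>0$ so that $F(t,y)+\gamma B y\ge 0$ for every $y\in X_{0+}$ with $\|y\|\le \xi$ and $t\in[0,\sigma]$. With this choice, the reformulated integral equation reads
\begin{equation*}
U(t,s)x = T_{(A-\gamma B)_0}(t-s)x + \Big(S_{(A-\gamma B)}\diamond \big[F(\cdot+s,U(\cdot+s,s)x)+\gamma B\,U(\cdot+s,s)x\big]\Big)(t-s).
\end{equation*}
The first term is nonnegative because $x\ge 0$ and the semigroup is positive. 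The second term is where the nonlinear feedback sits.

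The main step, and the step I expect to be the genuine obstacle, is establishing nonnegativity of the convolution term despite the fact that its integrand depends on the unknown $U$ itself. One cannot simply apply \eqref{eq3.1}, since that requires the integrand to be nonnegative, which is exactly what we are trying to prove about $U$. The natural remedy is a fixed-point or iteration argument on a sufficiently small time interval: I would set up the Picard-type sequence $U_0\equiv x$ and $U_{n+1}(t,s)x = T_{(A-\gamma B)_0}(t-s)x + \big(S_{(A-\gamma B)}\diamond[F(\cdot+s,U_n)+\gamma B U_n]\big)(t-s)$, show inductively that each $U_n\ge 0$ using \eqref{eq3.1} together with Assumption \ref{ASS3.0}(ii), and pass to the limit. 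The contraction estimate from Theorem \ref{TH2.8}, namely $\delta(\tau_L)\|L\|<1$ with $L=-\gamma B$, guarantees that on a short enough interval the iteration converges to the unique solution $U$, and nonnegativity is preserved in the limit because $X_{0+}$ is closed. This yields positivity on a first short interval $[s,s+\eta]$.

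Finally I would propagate positivity across the whole maximal interval of existence by a continuation argument. Using the semiflow property (iii) of Definition \ref{DE5.1}, $U(t,s)x = U(t,r)U(r,s)x$, and the fact that $U(r,s)x\ge 0$ has already been obtained, I would restart the short-interval argument from the new nonnegative initial datum $U(r,s)x$ at time $r$; one must re-invoke Assumption \ref{ASS3.0}(ii) with an updated bound $\xi$ on each successive subinterval. Since the solution remains bounded on any compact subinterval strictly inside $[s,s+\tau(s,x))$, a standard covering of that compact subinterval by finitely many short intervals extends nonnegativity to all $t\in[s,s+\tau(s,x))$. The delicate bookkeeping is ensuring $\gamma$ and the interval length can be chosen uniformly on each compact piece; this is where the boundedness of $U$ on compact subintervals and the local nature of the Lipschitz and positivity assumptions must be combined carefully.
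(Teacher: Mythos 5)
Your proposal is correct and follows essentially the same route as the paper: the paper cites Magal--Ruan for Theorem \ref{TH3.1} itself, but its displayed proof of the monotone version (Theorem \ref{TH3.4}) is exactly your argument --- rewrite the equation for $A-\gamma B$, set up the contraction $\Psi_x$ on the set of nonnegative continuous functions bounded by $\xi$, use positivity of $T_{(A-\gamma B)_0}$ and of $S_{(A-\gamma B)}\diamond\,\cdot$ together with Assumption \ref{ASS3.0}(ii) to show the iterates stay in the cone, pass to the limit by closedness of $X_{0+}$, and propagate via the semiflow property. Your identification of the self-referential integrand as the genuine obstacle, and its resolution by Picard iteration on a short interval, is precisely the paper's mechanism.
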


In order to derive a monotone semiflow property we need to extend the Assumption \ref{ASS3.0} as follows.

\begin{assumption}[Monotonicity] \label{ASS3.1} We assume that there exists a bounded linear operator $B \in \mathcal{L}(X_0,X)$ satisfying the two following properties

\begin{itemize}
\item[{\rm(i)}] For each $\gamma>0$ the linear operator $A-\gamma B$ is resolvent positive. That is to say that
$$
\left(\lambda I-(A- \gamma B) \right)^{-1} X_+ \subset X_+
$$
for each $\lambda >0$ large enough.
\item[{\rm(ii)}] For each $\xi >0$ and each $\sigma>0$ there exists $\gamma=\gamma(\xi,\sigma)>0$, such that
$$
0 \leq x \leq y \Rightarrow 0 \leq F(t,x)+\gamma Bx \leq F(t,y)+\gamma By
$$
whenever $x,y \in X_0$,  $ \Vert x \Vert \leq \xi $, $ \Vert y \Vert \leq \xi $ and $t \in [0, \sigma]$.
\end{itemize}
\end{assumption}

 Assumption \ref{ASS3.1} ${\rm(ii)}$ definitely means that the map $x \to  F(t,x)+\gamma Bx$ is non negative and monotone increasing on $B_{X_0}(0,\xi) \cap X_+$ for each $t \in [0, \sigma]$. Consequently we obtain the following result.
\begin{theorem}[Monotone Semiflow]\label{TH3.4}~\\
Let Assumptions \ref{ASS2.2}, \ref{ASS2.3}, \ref{ASS2.10} and \ref{ASS3.1} be satisfied. Then the maximal semiflow generated by (\ref{2.6}) is non negative and monotone increasing. That is to say that for each $x, y \in X_0$ and $s \geq 0$
$$
0 \leq x \leq y \Rightarrow 0 \leq U(t,s)x \leq U(t,s)y,
$$
whenever $t \in \left[s, \min\left( s+\tau \left( s,x\right),s+\tau \left( s,y \right) \right) \right)$
\end{theorem}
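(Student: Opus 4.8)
The plan is to obtain the non-negativity assertion $0 \le U(t,s)x$ directly from Theorem~\ref{TH3.1}: Assumption~\ref{ASS3.1}(i) coincides with Assumption~\ref{ASS3.0}(i), and Assumption~\ref{ASS3.1}(ii) taken with $y=x$ (so that $0\le x\le x$) yields Assumption~\ref{ASS3.0}(ii), hence the hypotheses of the positive semiflow theorem hold. It then remains to prove the comparison $U(t,s)x \le U(t,s)y$ for $0 \le x \le y$. I would fix $s \ge 0$ and $T < \min(\tau(s,x),\tau(s,y))$; since $U(\cdot,s)x$ and $U(\cdot,s)y$ are continuous on $[s,s+T]$, pick $\xi>0$ bounding both on this interval, set $\sigma = s+T$, and take the associated $\gamma = \gamma(\xi,\sigma)$ furnished by Assumption~\ref{ASS3.1}(ii). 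The crucial device is the algebraic rewriting
$$
A w + F(t,w) = (A-\gamma B)w + \bigl(F(t,w)+\gamma Bw\bigr),
$$
which replaces the pair $(A,F)$ by $(A-\gamma B, G)$ with $G(t,w):=F(t,w)+\gamma Bw$. By Theorem~\ref{TH2.8} the operator $A-\gamma B$ still satisfies Assumptions~\ref{ASS2.2} and~\ref{ASS2.3}, and $G$ satisfies Assumption~\ref{ASS2.10} since it differs from $F$ by the bounded operator $\gamma B$; hence by the uniqueness in Theorem~\ref{TH2.12} the semiflow computed from $(A-\gamma B,G)$ coincides with $U$, and $U(\cdot,s)z$ solves the reformulation of \eqref{2.7},
$$
U(t,s)z = T_{(A-\gamma B)_0}(t-s)z + \bigl(S_{(A-\gamma B)}\diamond G(\cdot+s, U(\cdot+s,s)z)\bigr)(t-s).
$$

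Next I would localize. By Theorem~\ref{TH2.8} the convolution $f \mapsto S_{(A-\gamma B)}\diamond f$ obeys a bound governed by a modified, still vanishing-at-$0$ semivariation function; combined with the Lipschitz constant $K+\gamma\|B\|_{\mathcal{L}(X_0,X)}$ of $G$, this produces a length $\eta>0$ such that on every subinterval of length $\eta$ the fixed-point map
$$
\Phi_z(w)(t) = T_{(A-\gamma B)_0}(t)z + \bigl(S_{(A-\gamma B)}\diamond G(\cdot+s, w)\bigr)(t)
$$
is a contraction on the non-negative ball of radius $\xi$ in $C([0,\eta],X_0)$, whose unique fixed point is the restriction of the corresponding solution. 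The point is that $\Phi_z$ is order-preserving: if $0 \le w_1 \le w_2$ with $\|w_i\|\le\xi$, then $G(t,w_1)\le G(t,w_2)$ by Assumption~\ref{ASS3.1}(ii), so $(S_{(A-\gamma B)}\diamond G(\cdot,w_1)) \le (S_{(A-\gamma B)}\diamond G(\cdot,w_2))$ by the monotonicity~\eqref{eq3.1} of the convolution; moreover $z_1 \le z_2$ gives $T_{(A-\gamma B)_0}(t)z_1 \le T_{(A-\gamma B)_0}(t)z_2$ because Assumption~\ref{ASS3.1}(i) renders this semigroup positive, as recalled after Assumption~\ref{ASS3.1}. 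Thus $z_1 \le z_2$ and $w_1 \le w_2$ imply $\Phi_{z_1}(w_1) \le \Phi_{z_2}(w_2)$, and $\Phi_z$ preserves the non-negative ball since $G(t,0)=F(t,0)\ge 0$ and the semigroup is positive.

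The comparison on $[0,\eta]$ would then follow from a monotone Picard iteration: starting from $w\equiv 0$, the inductive inequality $\Phi_x^{\,n}(0)\le \Phi_y^{\,n}(0)$ holds for every $n$ (using $\Phi_x(w)\le \Phi_y(w)$ together with the monotonicity of $\Phi_y$), and passing to the contraction limit gives $U(\cdot+s,s)x \le U(\cdot+s,s)y$ on $[0,\eta]$, in particular $U(s+\eta,s)x \le U(s+\eta,s)y$. I would finally extend the comparison across $[s,s+T]$ by covering it with finitely many intervals of length $\eta$ and invoking the evolution identity $U(t,s)=U(t,s+\eta)U(s+\eta,s)$ of Definition~\ref{DE5.1}(iii) to feed the newly ordered data into the next step. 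I expect the main obstacle to be the consistent bookkeeping of constants: $\gamma$ depends on the a priori bound $\xi$, which itself depends on the interval on which both solutions are controlled, while the contraction length $\eta$ depends on $\gamma$. One must fix $\xi$ and $\sigma$ first, then $\gamma$, then $\eta$, and verify that the non-negative ball of radius $\xi$ is genuinely invariant under $\Phi_z$, so that the monotone iteration remains inside the region where Assumption~\ref{ASS3.1}(ii) is in force.
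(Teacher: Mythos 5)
Your proposal is correct and follows essentially the same route as the paper: replace $(A,F)$ by $(A-\gamma B,\,F+\gamma B\,\cdot)$, observe that the resulting fixed-point map $\Phi_z$ is order-preserving (positivity of $T_{(A-\gamma B)_0}$ and of $S_{(A-\gamma B)}\diamond\,\cdot$ plus Assumption~\ref{ASS3.1}(ii)), run a monotone Picard iteration, and propagate by the semiflow property. The only cosmetic differences are that you seed the iteration at $0$ rather than at the constant functions $x$ and $y$, and you make the local-to-global continuation step more explicit than the paper does.
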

\begin{proof}
Without loss of generality we can assume that $s=0$ and $x\in X_{0+}.$
Moreover, using the semiflow property, it is sufficient to prove that there
exists $\sigma_0 \in \left( 0,\tau \left( 0,x\right) \right) $ such that $%
U(t,0)x\in X_{0+},$ $\forall t\in \left[ 0,\sigma_0 \right] .$ Let $x, y\in
X_{0+}$ be fixed. Set
$$
\xi :=2 \left( \left\Vert x \right\Vert+ \Vert y \Vert +1 \right) .
$$
Let $\gamma >0$ such that
\begin{equation*}
u \leq  v   \Rightarrow F(t,u)+\gamma Bu \leq F(t,v)+\gamma Bv
\end{equation*}%
when $u,v \in X_{0+}$, $\left\Vert u \right\Vert \leq \xi $, $\left\Vert v \right\Vert \leq \xi $ and $t\in \lbrack 0,1]$. \\
Let us fix $\tau _{\gamma }>0$ such that
$$
\gamma \Vert B \Vert_{\mathcal{L}(X_0,X)} \delta \left(\tau _{\gamma } \right)<1.
$$ For each $\sigma \in \left( 0,\tau _{\gamma
}\right) ,$ define
\begin{equation*}
E^{\sigma }=\left\{ \varphi \in C\left( \left[ 0,\sigma \right]
,X_{0+}\right) :\left\Vert \varphi (t)\right\Vert \leq \xi ,\;\;\forall t\in %
\left[ 0,\sigma \right] \right\} .
\end{equation*}%
Then it is sufficient to consider the fixed point problem
\begin{equation*}
u(t)=T_{(A-\gamma B)_{0}}(t)x + S_{(A-\gamma B)}\diamond ( F(.,u(.))+\gamma
Bu(.)) (t)=:\Psi_x (u)(t), \forall t\in [0,\sigma].
\end{equation*}
Moreover, by using Theorem \ref{TH2.8}, for each $\varphi \in E^{\sigma }$
and each $t\in \left[ 0,\sigma \right] $, we deduce that
\begin{eqnarray*}
\left\Vert \Psi_x (\varphi )(t)\right\Vert &=&\left\Vert T_{(A-\gamma
B)_{0}}(t)x+ S_{(A-\gamma B)}\diamond \left(F(.,\varphi (.))+\gamma B \varphi
(.)\right) (t)\right\Vert \\
&\leq &\left\Vert T_{(A-\gamma B)_{0}}(t)x\right\Vert \\
&& +\frac{\delta (t)}{1-\gamma \Vert B \Vert_{\mathcal{L}(X_0,X)}  \delta (\tau _{\gamma })}\sup_{s\in [
0,t] }\left\Vert F(s,\varphi(s))+\gamma B \varphi (s)\right\Vert \\
&\leq &\sup_{t\in \left[ 0,\sigma \right] }\left\Vert T_{(A-\gamma
B)_{0}}(t)x\right\Vert \\
&&+\frac{\delta (\sigma )}{1-\gamma \Vert B \Vert_{\mathcal{L}(X_0,X)}  \delta (\tau
_{\gamma })}\left[ \sup_{s\in \left[ 0,\sigma \right] }\left\Vert
F(s,0)\right\Vert +\left[ K(1,\xi )+\gamma \Vert B \Vert_{\mathcal{L}(X_0,X)} \right] \xi \right] .
\end{eqnarray*}%
Hence, there exists $\sigma _{1}\in \left( 0,1\right) $ such that
\begin{equation*}
\Psi_x(E^{\sigma })\subset E^{\sigma } \text{ and } \Psi_y(E^{\sigma })\subset E^{\sigma },\;\forall \sigma \in \left( 0,\sigma
_{1}\right] .
\end{equation*}%
Therefore, for each $\sigma \in \left( 0,\sigma _{1}\right] $ and each pair $%
\varphi ,\psi \in E^{\sigma },$ we have for $t\in \left[ 0,\sigma \right] $ and $z=x,y$
that
\begin{eqnarray*}
\left\Vert \Psi_z (\varphi )(t)-\Psi_z (\psi )(t)\right\Vert  =\left\Vert
\left( S_{(A-\gamma B)}\diamond \left[ F(.,\varphi (.))-F(.,\psi (.))+\gamma B
\left( \varphi -\psi \right) (.)\right] \right) (t)\right\Vert
\end{eqnarray*}%
and by using Assumption \ref{ASS2.10} we obtain
\begin{eqnarray*}
&& \left\Vert \Psi_z (\varphi )(t)-\Psi_z (\psi )(t)\right\Vert \\
&& \quad\quad \leq \frac{\delta (t)}{1-\gamma \Vert B \Vert_{\mathcal{L}(X_0,X)}
\delta (\tau_{\gamma })}\left[ K(1,\xi )+\gamma \Vert B \Vert_{\mathcal{L}(X_0,X)} \right] \sup_{s\in \left[ 0,\sigma
\right] }\left\Vert \left( \varphi -\psi \right) (s)\right\Vert .
\end{eqnarray*}
Thus, there exists $\sigma _{2}\in \left( 0,\sigma _{1}\right] $ such that $%
\Psi_z (E^{\sigma _{2}})\subset E^{\sigma _{2}}$ and $\Psi_z $ is a contraction
strict on $E^{\sigma _{2}}$.

Now choosing the constant functions $\varphi  \equiv x$ and $\psi \equiv y $ we obtain
$$
\Psi_x(\varphi) \leq \Psi_y (\varphi) \leq  \Psi_y(\psi).
$$
Since $\Psi$ is monotone, it follows that
$$
\Psi_x^2(\varphi) \leq \Psi_y ( \Psi_x(\varphi) ) \leq \Psi_y ^2(\psi).
$$
Now by induction, we obtain
$$
\Psi_x^n(\varphi) \leq  \Psi_y ^n(\psi), \forall n \in \mathbb{N} \setminus \left\lbrace 0 \right\rbrace.
$$
By taking the limit when $n$ goes to $+ \infty$ we obtain
$$
U(t,0)x=\lim_{n \to + \infty} \Psi_x^n(\varphi)(t) \leq \lim_{n \to + \infty} \Psi_y^n(\varphi)(t)=U(t,0)y.
$$
\end{proof}

Recall that the positive cone $X_{+}$ is said to be \textbf{normal} if there exists a norm $\left\Vert .\right\Vert _{1}$
equivalent to $\left\Vert .\right\Vert $, which is monotone. That is to say that
\begin{equation*}
\forall x,y\in X,\text{ }0\leq x\leq y\Rightarrow \left\Vert x\right\Vert
_{1}\leq \left\Vert y\right\Vert _{1}.
\end{equation*}

\begin{lemma}
Let Assumptions \ref{ASS2.2}, \ref{ASS2.3}, \ref{ASS2.10} and \ref{ASS3.1} be satisfied. Assume in addition that $X_+$ is normal. Then for each $x, y \in X_0$ and $s \geq 0$  the maximal time of existence of the semiflow $U$ satisfies the following properties
$$
0 \leq x \leq y \Rightarrow \tau(s,x) \geq \tau(s,y).
$$
\end{lemma}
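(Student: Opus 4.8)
The plan is to argue by contradiction, using the blow-up characterization of the maximal time of existence together with the monotonicity of the semiflow and the normality of the cone. First I would fix $s\geq 0$ and $x,y\in X_0$ with $0\leq x\leq y$, and suppose, contrary to the claim, that $\tau(s,x)<\tau(s,y)$ (if instead $\tau(s,x)\geq \tau(s,y)$ there is nothing to prove). This inequality immediately forces $\tau(s,x)<+\infty$, so that property ${\rm(iv)}$ in Definition~\ref{DE5.1} applies to $U(\cdot,s)x$ and yields $\left\Vert U(t,s)x\right\Vert\to +\infty$ as $t\to \left(s+\tau(s,x)\right)^-$.

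Next I would exploit the strict inequality $s+\tau(s,x)<s+\tau(s,y)$ to observe that the compact interval $\left[s,\,s+\tau(s,x)\right]$ is contained in the half-open interval $\left[s,\,s+\tau(s,y)\right)$ on which $t\mapsto U(t,s)y$ is defined and continuous, by Theorem~\ref{TH2.12}. Hence $U(\cdot,s)y$ is bounded on $\left[s,\,s+\tau(s,x)\right]$; writing $\left\Vert\cdot\right\Vert_1$ for the monotone norm furnished by the normality of $X_+$, and using that it is equivalent to $\left\Vert\cdot\right\Vert$, we may set $M:=\sup_{t\in[s,\,s+\tau(s,x)]}\left\Vert U(t,s)y\right\Vert_1<+\infty$.

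The key step then combines the monotone semiflow property with normality. By Theorem~\ref{TH3.4} we have $0\leq U(t,s)x\leq U(t,s)y$ for every $t\in\left[s,\,s+\tau(s,x)\right)$, and since $\left\Vert\cdot\right\Vert_1$ is monotone this gives $\left\Vert U(t,s)x\right\Vert_1\leq \left\Vert U(t,s)y\right\Vert_1\leq M$ on the whole interval $\left[s,\,s+\tau(s,x)\right)$. Because $\left\Vert\cdot\right\Vert_1$ is equivalent to $\left\Vert\cdot\right\Vert$, the family $U(t,s)x$ stays $\left\Vert\cdot\right\Vert$-bounded as $t\to\left(s+\tau(s,x)\right)^-$, contradicting the blow-up obtained in the first step. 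Therefore $\tau(s,x)\geq\tau(s,y)$.

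I do not expect a genuine obstacle here; the only point requiring care is recognizing that normality is exactly what converts the order relation $U(t,s)x\leq U(t,s)y$ into the norm bound that defeats the blow-up alternative, and that one must pass to the monotone norm $\left\Vert\cdot\right\Vert_1$ \emph{before} comparing, since the original norm $\left\Vert\cdot\right\Vert$ need not itself be monotone along the order of $X_+$.
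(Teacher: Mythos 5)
Your proof is correct and follows essentially the same route as the paper's: contradiction via the blow-up alternative, using the monotone semiflow property of Theorem~\ref{TH3.4} together with the monotone equivalent norm supplied by normality to bound $U(t,s)x$ by $U(t,s)y$. Your version is slightly more careful in justifying the boundedness of $U(\cdot,s)y$ on the closed interval $[s,s+\tau(s,x)]$ via continuity on a compact set, which the paper leaves implicit.
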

\begin{proof}
By Theorem~\ref{TH3.4}, we see that
$$
 0 \leq U(t,s)x \leq U(t,s)y,\ \forall t \in \left[s,s+ \min\left( \tau \left( s,x\right),\tau \left( s,y \right) \right) \right).
$$
Since the equivalent norm $\left\Vert .\right\Vert _{1}$ is monotone, it follows that
$$\left\Vert U(t,s)x\right\Vert_{1}\leq \left\Vert U(t,s)y\right\Vert _{1},\ \forall t \in \left[s,s+ \min\left( \tau \left( s,x\right),\tau \left( s,y \right) \right) \right).$$
Assume by contradiction that $\tau(s,x) < \tau(s,y)$. Then we have
$$
+\infty=\lim_{t\rightarrow \left( s+\tau \left( s,x\right) \right) ^{-}}\left\|
U(t,s)x\right\|_{1} \leq \lim_{t\rightarrow \left( s+\tau \left( s,x\right) \right) ^{-}}\left\|
U(t,s)y\right\|_{1} <+\infty,
$$
which is a contradiction.
\end{proof}

\section{Comparison principle}

\begin{proposition}[Lower Solution: Integral form]\label{pro4.1}
Let Assumptions \ref{ASS2.2}, \ref{ASS2.3}, \ref{ASS2.10} and \ref{ASS3.1} be satisfied, and $v \in C([s,s+\hat{\tau}], X_{0+})$ with $s \geq 0$ and $\hat{\tau} \geq 0$. Assume that for each $\gamma>0$ large enough and each $t \in [s,s+\hat{\tau}]$,
$$
v(t) \leq T_{(A-\gamma B)_0}(t-s)x+\left(S_{(A-\gamma B)} \diamond (\gamma B+F)(s+.,v(s+.)) \right)(t-s).
$$
Then
$$
v(t) \leq U(t,s)x,
$$
whenever $t \in [s,\min(s+\hat{\tau}, s+\tau(s,x)))$.
\end{proposition}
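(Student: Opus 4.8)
The plan is to recast the hypothesis as a one-sided fixed-point inequality and then run a monotone iteration, exactly as in the proof of Theorem~\ref{TH3.4}, upgrading the resulting local comparison to a global one by a continuation argument. Without loss of generality take $s=0$ and write
$$
\Psi_x(u)(t) = T_{(A-\gamma B)_0}(t)x + \left(S_{(A-\gamma B)}\diamond\left(F(\cdot,u(\cdot))+\gamma B u(\cdot)\right)\right)(t),
$$
so that the assumed inequality on $v$ is precisely $v\leq\Psi_x(v)$ on the relevant interval. The map $\Psi_x$ is monotone on $E^\sigma$: for $\varphi\leq\psi$ the difference $\Psi_x(\psi)-\Psi_x(\varphi)$ is the diamond convolution of $(\gamma B+F)(\cdot,\psi)-(\gamma B+F)(\cdot,\varphi)\geq 0$, which is nonnegative by the order-preservation \eqref{eq3.1}.

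First I would fix a closed subinterval $[0,t_1]$ with $t_1<\min(\hat\tau,\tau(0,x))$, choose $\xi$ bounding $\|x\|$, $\|v(\cdot)\|$ and $\|U(\cdot,0)x\|$ on $[0,t_1]$, and then pick $\gamma$ large enough that both the hypothesis and the monotonicity of Assumption~\ref{ASS3.1}(ii) hold, together with $\tau_\gamma$ and a small $\sigma$ so that, by Theorem~\ref{TH2.8}, $\Psi_x$ maps $E^\sigma$ into itself and is a strict contraction there (as in Theorem~\ref{TH3.4}). Since $v\in E^\sigma$ and $v\leq\Psi_x(v)$, monotonicity produces a nondecreasing orbit $v\leq\Psi_x(v)\leq\Psi_x^2(v)\leq\cdots$, which converges uniformly to the unique fixed point of $\Psi_x$, namely $U(\cdot,0)x$ restricted to $[0,\sigma]$. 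Because $X_+$ is closed, passing to the limit in $\Psi_x^n(v)\geq v$ yields $v(t)\leq U(t,0)x$ for all $t\in[0,\sigma]$.

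To globalize, set $t^*=\sup\{\,t\in[0,t_1]:v\leq U(\cdot,0)x\text{ on }[0,t]\,\}$, which is positive by the previous step, and suppose $t^*<t_1$. Continuity and closedness of the cone give $v(t^*)\leq U(t^*,0)x=:w$. Writing $t=t^*+\ell$ and applying the decomposition \eqref{2.5} to both $T_{(A-\gamma B)_0}(t)x$ and the diamond term, the hypothesis becomes
$$
v(t)\leq T_{(A-\gamma B)_0}(\ell)P + \left(S_{(A-\gamma B)}\diamond(\gamma B+F)(t^*+\cdot,v(t^*+\cdot))\right)(\ell),
$$
where $P$ is the original right-hand side evaluated at $t^*$. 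The crucial point is that $P\leq w$: indeed $w$ is given by the same formula with $v$ replaced by $U(\cdot,0)x$, and since $v\leq U(\cdot,0)x$ on $[0,t^*]$, the monotonicity of $\gamma B+F$ together with \eqref{eq3.1} yields the comparison of the two diamond terms. Positivity of $T_{(A-\gamma B)_0}(\ell)$ then shows that $v$ satisfies the lower-solution inequality with base point $w$ at time $t^*$, so the local step reruns on some $[t^*,t^*+\sigma']$ and gives $v\leq U(\cdot,t^*)w=U(\cdot,0)x$ there, contradicting the maximality of $t^*$. Hence $t^*=t_1$, and letting $t_1\uparrow\min(\hat\tau,\tau(0,x))$ finishes the proof. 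I expect this continuation step — verifying that the lower-solution inequality propagates from base $x$ to the shifted base $w$ through \eqref{2.5}, \eqref{eq3.1} and positivity of the semigroup — to be the main obstacle, the local monotone iteration being routine given Theorems~\ref{TH2.8} and~\ref{TH3.4}.
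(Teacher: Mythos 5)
Your proof is correct and follows essentially the same route as the paper's: recast the hypothesis as $v\leq\Psi_x(v)$ and iterate the monotone map $\Psi_x$ to obtain $v\leq\Psi_x(v)\leq\Psi_x^2(v)\leq\cdots\to U(\cdot,0)x$ on a short interval. The continuation argument you spell out --- propagating the lower-solution inequality to the shifted base point $U(t^*,0)x$ via \eqref{2.5}, \eqref{eq3.1} and positivity of the semigroup --- is precisely the content the paper compresses into the phrase ``using the semiflow property,'' so you have merely made explicit a detail the published proof leaves implicit.
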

\begin{proof}Our arguments will be similar to those in Theorem~\ref{TH3.4}.
Without loss of generality we can assume that $s=0$ and $x\in X_{0+}.$
Moreover, using the semiflow property, it is sufficient to prove that there
exists  $\sigma_0 \in \left( 0,\min(\hat{\tau}, \tau \left( 0,x\right)) \right)$ such that
$$
v(t) \leq U(t,0)x, \forall \  t\in \left[ 0,\sigma_0 \right].
$$
Let $x\in X_{0+}$ be fixed; $\tau _{\gamma}$ and $\gamma$ be given in the proof of Theorem~\ref{TH3.4}. For each $\sigma \in \left( 0,\tau _{\gamma
}\right)$, we define
\begin{equation*}
\Psi_x (v)(t):=T_{(A-\gamma B)_{0}}(t)x +  S_{(A-\gamma B)}\diamond ( F(.,v(.))+\gamma
Bv(.)) (t), \ \forall \ t\in [0,\sigma].
\end{equation*}
Then for all $t\in [0,\sigma]$, we have
$$v(t)\leq \Psi_x (v), $$
$$\Psi_x (v)\leq \Psi_x(\Psi_x (v))=\Psi_x^2 (v),$$
and
$$\Psi_x^{(n-1)} (v)\leq \Psi_x(\Psi_x^{(n-1)} (v))=\Psi_x^n (v),\ \forall\ n\in N.$$
That is to say that for all $t\in [0,\sigma]$, it follows that
$$v(t)\leq \Psi_x (v)\leq \Psi_x^2 (v) \leq \Psi_x^3 (v)\leq \ldots \leq \Psi_x^n (v)\rightarrow U(t,0)x\ \mbox{as}\ n\rightarrow \infty. $$
The proof is complete.
\end{proof}
\begin{proposition}[Upper Solution: Integral form]
Let Assumptions \ref{ASS2.2}, \ref{ASS2.3}, \ref{ASS2.10} and \ref{ASS3.1} be satisfied, and $v \in C([s,s+\hat{\tau}], X_{0+})$ with $s \geq 0$ and $\hat{\tau} \geq 0$. Assume that for each $\gamma>0$ large enough and each $t \in [s,s+\hat{\tau}]$,
$$
v(t) \geq T_{(A-\gamma B)_0}(t-s)x+\left(S_{(A-\gamma B)} \diamond (\gamma B+F)(s+.,v(s+.)) \right)(t-s).
$$
Then
$$
\tau(s,x) >\hat{\tau}
$$
and
$$
v(t) \geq U(t,s)x, \forall t \in [s,s+\hat{\tau}].
$$
\end{proposition}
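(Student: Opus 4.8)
The plan is to mirror the proof of Proposition~\ref{pro4.1}, reversing every order inequality, and then to add a continuation argument that upgrades the comparison to the non-blow-up conclusion $\tau(s,x)>\hat{\tau}$. As before I reduce to $s=0$ and $x\in X_{0+}$. I set $\xi:=2(\Vert x\Vert+\sup_{t\in[0,\hat{\tau}]}\Vert v(t)\Vert+1)$, which is finite since $v$ is continuous on a compact interval, I let $\gamma=\gamma(\xi,\hat{\tau})$ be given by Assumption~\ref{ASS3.1}(ii) (enlarged if necessary so that the hypothesised inequality for $v$ also holds), and I fix $\tau_\gamma>0$ with $\gamma\Vert B\Vert_{\mathcal{L}(X_0,X)}\delta(\tau_\gamma)<1$. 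Writing $\Psi_x$ for the fixed-point map from the proof of Theorem~\ref{TH3.4}, the hypothesis reads $v(t)\ge\Psi_x(v)(t)$. Since $v$ lies in the set $E^\sigma$ for $\sigma$ small and $\Psi_x$ is monotone, iterating gives $v\ge\Psi_x(v)\ge\Psi_x^2(v)\ge\cdots\ge\Psi_x^n(v)\to U(\cdot,0)x$, whence $U(t,0)x\le v(t)$ on a short interval $[0,\sigma_0]$; positivity $U(\cdot,0)x\ge 0$ comes from Theorem~\ref{TH3.1}. The same computation, based at any time $a$, yields a local upper comparison for data issued at $a$.

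The heart of the matter is a restart step propagating the comparison. Fix $a$ with $a<\min(\hat{\tau},\tau(0,x))$ and assume $0\le U(t,0)x\le v(t)$ on $[0,a]$. With $g(r):=(\gamma B+F)(r,v(r))$, the semigroup property of $T_{(A-\gamma B)_0}$ together with the cocycle identity \eqref{2.5} for the $\diamond$-term rearranges the hypothesis into
\[
v(a+\cdot)\ge T_{(A-\gamma B)_0}(\cdot)\,p+\big(S_{(A-\gamma B)}\diamond g(a+\cdot)\big)(\cdot),\qquad p:=\Psi_x(v)(a).
\]
That is, $v(a+\cdot)$ is an upper solution issued from the datum $p\in X_{0+}$. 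Because $U(\cdot,0)x\le v$ on $[0,a]$, Assumption~\ref{ASS3.1}(ii) gives $(\gamma B+F)(\cdot,U(\cdot,0)x)\le g$ there, and the order-preservation \eqref{eq3.1} of $S_{(A-\gamma B)}\diamond$ then yields $U(a,0)x\le p$. Applying the local comparison to the datum $p$ gives $U(t,a)p\le v(t)$ for $t$ slightly larger than $a$, and since $U(a,0)x\le p$ the monotone semiflow property (Theorem~\ref{TH3.4}) gives $U(t,0)x=U(t,a)U(a,0)x\le U(t,a)p\le v(t)$, extending the comparison past $a$.

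I would then package this as a connectedness argument on $J:=[0,\min(\tau(0,x),\hat{\tau}))$. The set $S:=\{t\in J:0\le U(r,0)x\le v(r)\ \forall r\in[0,t]\}$ is nonempty (it contains $[0,\sigma_0]$), closed in $J$ by continuity of $U$ and of $v$, and open in $J$ by the restart step applied at any $t\in S$. Since $J$ is an interval, $S=J$, so $0\le U(t,0)x\le v(t)$ for every $t\in J$.

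Finally I obtain $\tau(0,x)>\hat{\tau}$ using normality of $X_+$ (presumably a standing hypothesis here, as in the preceding lemma, since it is exactly what the argument requires). Suppose instead $\tau(0,x)\le\hat{\tau}$; then $J=[0,\tau(0,x))$ and the comparison $0\le U(t,0)x\le v(t)$ holds on all of $J$, with $v$ bounded on $[0,\hat{\tau}]\supseteq J$. The monotone equivalent norm $\Vert\cdot\Vert_1$ then gives $\Vert U(t,0)x\Vert_1\le\Vert v(t)\Vert_1\le\sup_{[0,\hat{\tau}]}\Vert v\Vert_1<+\infty$, so $\Vert U(t,0)x\Vert$ stays bounded as $t\to(\tau(0,x))^-$, contradicting the blow-up alternative of Definition~\ref{DE5.1}(iv). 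Hence $\tau(0,x)>\hat{\tau}$, so $J\supseteq[0,\hat{\tau}]$, and passing to the limit at the endpoint yields $v(t)\ge U(t,0)x$ on all of $[0,\hat{\tau}]$. The two steps I expect to cost real work are the restart identity, namely checking that the rearrangement via \eqref{2.5} genuinely exhibits $v(a+\cdot)$ as an upper solution with the correct datum $p$ and that $U(a,0)x\le p$, and the non-blow-up step, which is precisely where normality of the cone is indispensable since it is what converts the order bound $U\le v$ into a genuine norm bound.
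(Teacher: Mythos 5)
The paper states this proposition without any proof, so there is no argument of the authors' to compare yours against; the intended proof is evidently the order--reversed analogue of Proposition~\ref{pro4.1}, and that is exactly what you construct. Your local step is correct: with $\xi$ controlling both $\Vert x\Vert$ and $\sup_{[s,s+\hat{\tau}]}\Vert v\Vert$, the hypothesis reads $v\geq \Psi_x(v)$, the invariance $\Psi_x(E^{\sigma})\subset E^{\sigma}$ from the proof of Theorem~\ref{TH3.4} keeps every iterate in the ball where $\Psi_x$ is monotone, and the decreasing sequence $\Psi_x^n(v)$ converges to the fixed point $U(\cdot,0)x$, which is therefore below $v$ on a short interval. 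Your restart step is also sound and in fact supplies a detail the paper glosses over even in the lower-solution case, where ``using the semiflow property'' is invoked without verifying that the time-shifted function is again a sub/supersolution with the correct datum: the identity \eqref{2.5} does exhibit $v(a+\cdot)$ as an upper solution issued from $p=\Psi_x(v)(a)$, and \eqref{eq3.1} together with Assumption~\ref{ASS3.1}(ii) gives $U(a,0)x\leq p$, so monotonicity of the semiflow closes the induction; the connectedness argument then yields the comparison on all of $[s,\min(s+\hat{\tau},s+\tau(s,x)))$.

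The one substantive issue is the one you flag yourself. The conclusion $\tau(s,x)>\hat{\tau}$ requires converting the order bound $0\leq U(t,s)x\leq v(t)$ into a norm bound so as to contradict the blow-up alternative of Definition~\ref{DE5.1}(iv), and that step genuinely needs $X_+$ to be normal (the paper itself adds normality as an explicit extra hypothesis in the lemma just before Section~5 for precisely this purpose, but omits it from the present statement). Without normality, or some a priori estimate replacing it, the non-blow-up assertion is not justified; with it, your proof is complete. You should also note, for full rigor, that the constant $\gamma$ must be taken large enough to serve simultaneously for Assumption~\ref{ASS3.1}(ii) on the ball of radius $\xi$ and for the hypothesised integral inequality (which holds for all large $\gamma$), and that the restart datum $p$ satisfies $0\leq p\leq v(a)$, so normality again bounds $\Vert p\Vert$ uniformly along the continuation --- a bookkeeping point rather than a gap.
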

Let $I$ be an interval in $\mathbb{R}$. We recall that $v \in C(I, D(A) )$ if and only if
$$
v(t) \in D(A), \forall t \in I,
$$
and the map $t \to Av(t)$ is continuous from $I$ in $X$.
\begin{proposition}[Lower Solution: Differential form] \label{PROP4.3}
Let Assumptions \ref{ASS2.2}, \ref{ASS2.3}, \ref{ASS2.10} and \ref{ASS3.1} be satisfied.
Assume that $s \geq 0$, $\hat{\tau} \geq 0$, and $v \in C^1([s,s+\hat{\tau}], X) \cap C([s,s+\hat{\tau}], D(A) )$ is a non-negative function that is
$$
v(t) \geq 0, \forall t \in [s,s+\hat{\tau}].
$$
If
$$
v'(t) \leq Av(t)+ F(t,v(t)), \forall t \in [s,s+\hat{\tau}], \text{ and } v(s)=x \in X_{0+},
$$
then
$$
v(t) \leq U(t,s)x, \forall t \in [s,\min(s+\hat{\tau}, s+\tau(s,x))).
$$
\end{proposition}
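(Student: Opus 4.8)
The plan is to reduce this differential subsolution to the integral subsolution of Proposition~\ref{pro4.1} and then invoke that proposition directly. The guiding idea is that the defect by which $v$ fails to solve the equation is nonpositive, and that the positivity-preserving variation-of-constants formula for $A-\gamma B$ turns this nonpositive defect into a term that can only lower the right-hand side.

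Concretely, as in Theorem~\ref{TH3.4} and Proposition~\ref{pro4.1}, I would first fix $\gamma>0$ large enough that $A-\gamma B$ is resolvent positive (Assumption~\ref{ASS3.1}(i)) and that the monotonicity bound of Assumption~\ref{ASS3.1}(ii) used in Proposition~\ref{pro4.1} holds; since $v$ is continuous on the compact interval $[s,s+\hat{\tau}]$, its range is bounded and an admissible $\gamma$ exists. Next I would set
$$
f(t):=v'(t)-Av(t)-F(t,v(t)), \qquad t\in[s,s+\hat{\tau}],
$$
and record two facts: $f$ is continuous (because $v\in C^1$, the condition $v\in C([s,s+\hat{\tau}],D(A))$ makes $t\mapsto Av(t)$ continuous, and $t\mapsto F(t,v(t))$ is continuous), and $f(t)\leq 0$ for all $t$ by hypothesis, i.e. $f(t)\in -X_+$. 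Rewriting the defining identity and using $(A-\gamma B)v+\gamma B v=Av$ gives
$$
v'(t)=(A-\gamma B)v(t)+\big[\gamma B v(t)+F(t,v(t))\big]+f(t),\qquad v(s)=x.
$$

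The key step is to convert this classical identity into the variation-of-constants representation for $A-\gamma B$. By Theorem~\ref{TH2.8} the perturbed operator $A-\gamma B$ still satisfies Assumptions~\ref{ASS2.2} and~\ref{ASS2.3}, so Theorem~\ref{thm2.6} applies to it. Since $v\in C^1([s,s+\hat{\tau}],X)\cap C([s,s+\hat{\tau}],D(A))$, integrating the identity above from $s$ to $t$ and using the closedness of $A$ to pull $A-\gamma B$ out of the integral shows that $v$ is the integrated solution of $w'=(A-\gamma B)w+h$, $w(s)=x$, with continuous forcing $h:=\gamma Bv+F(\cdot,v(\cdot))+f$. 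Hence
$$
v(t)=T_{(A-\gamma B)_0}(t-s)x+\big(S_{(A-\gamma B)}\diamond h(s+\cdot)\big)(t-s).
$$
Splitting $h=g+f$ with $g:=(\gamma B+F)(\cdot,v(\cdot))$, using linearity of the $\diamond$-convolution, and applying the positivity relation \eqref{eq3.1} to the nonpositive forcing $f(s+\cdot)$ gives $\big(S_{(A-\gamma B)}\diamond f(s+\cdot)\big)(t-s)\leq 0$, whence
$$
v(t)\leq T_{(A-\gamma B)_0}(t-s)x+\big(S_{(A-\gamma B)}\diamond (\gamma B+F)(s+\cdot,v(s+\cdot))\big)(t-s).
$$
This is exactly the hypothesis of Proposition~\ref{pro4.1}, which then yields $v(t)\leq U(t,s)x$ on $[s,\min(s+\hat{\tau},s+\tau(s,x)))$.

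I expect the only delicate point to be the middle step: verifying rigorously that the classical solution $v$ of the non-densely defined inhomogeneous problem coincides with the mild (integrated) solution produced by the variation-of-constants formula. This is where non-density matters, and one must use that $v(t)\in D(A)$ with $t\mapsto Av(t)$ continuous together with the closedness of $A$, so that $\int_s^t v(r)\,dr\in D(A)$ and $A\int_s^t v(r)\,dr=\int_s^t Av(r)\,dr$, reducing $v$ to an integrated solution to which the representation of Theorem~\ref{thm2.6} applied to $A-\gamma B$ is available; everything else is bookkeeping with linearity and the positivity relation \eqref{eq3.1}.
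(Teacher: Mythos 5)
Your proposal is correct and follows essentially the same route as the paper: introduce the nonpositive defect $h=v'-Av-F(\cdot,v)$, rewrite $v$ via the variation-of-constants formula for $A-\gamma B$, drop the defect term using the positivity relation \eqref{eq3.1}, and conclude by Proposition~\ref{pro4.1}. Your extra care in justifying that the classical solution coincides with the integrated solution is a point the paper's proof passes over silently, but the argument is the same.
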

\begin{proof}
Let
$$
h(t):=v'(t) -[ Av(t)+ F(t,v(t))], \forall t \in [s,s+\hat{\tau}].
$$
Then $$h(.)\in C([s,s+\hat{\tau}], X)\ \mbox{and}\ h(.)\leq 0 \ \mbox{on}\ [s,s+\hat{\tau}],$$
and
$$
v'(t) = Av(t)+ F(t,v(t))+h(t), \forall t \in [s,s+\hat{\tau}], \text{ and } v(s)=x \in X_{0+}.
$$
Let $\gamma$ be given in the proof of Theorem~\ref{TH3.4}. Then for all $t \in [s,s+\hat{\tau}]$, we have
\begin{eqnarray*}
v(t)&=&T_{(A-\gamma B)_0}(t-s)x+\left(S_{(A-\gamma B)} \diamond (\gamma B v(s+.)+F(s+.,v(s+.)+h(s+.) \right)(t-s),\nonumber\\
&\leq& T_{(A-\gamma B)_0}(t-s)x+\left(S_{(A-\gamma B)} \diamond (\gamma B v(s+.)+F(s+.,v(s+.) \right)(t-s),\nonumber
\end{eqnarray*}
where we have used the fact \eqref{eq3.1}. By using Proposition \ref{pro4.1}, we complete the proof.

\end{proof}

\begin{proposition}[Upper Solution: Differential form] \label{PROP4.4}
Let Assumptions \ref{ASS2.2}, \ref{ASS2.3}, \ref{ASS2.10} and \ref{ASS3.1} be satisfied. Assume that $s \geq 0$, $\hat{\tau} \geq 0$, and $v \in C^1([s,s+\hat{\tau}], X) \cap C([s,s+\hat{\tau}], D(A) )$ is a non-negative function that is
$$
v(t) \geq 0, \forall t \in [s,s+\hat{\tau}].
$$
If
$$
v'(t) \geq A v(t)+ F(t,v(t)), \forall t \in [s,s+\hat{\tau}], \text{ and } v(s)=x \in X_{0+},
$$
then
$$
\tau(s,x) >\hat{\tau}
$$
and
$$
v(t) \geq U(t,s)x, \forall t \in [s,s+\hat{\tau}].
$$
\end{proposition}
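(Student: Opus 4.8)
The plan is to imitate the proof of Proposition \ref{PROP4.3} and reduce the differential inequality to the already-established integral form of the upper-solution result. The whole argument is the ``$\geq$'' mirror of Proposition \ref{PROP4.3}, with the lower-solution Proposition \ref{pro4.1} replaced by its upper-solution counterpart stated just above.

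First I would introduce the continuous defect
$$
h(t):=v'(t)-\left[Av(t)+F(t,v(t))\right],\quad t\in[s,s+\hat{\tau}].
$$
Since $v\in C^1([s,s+\hat{\tau}],X)\cap C([s,s+\hat{\tau}],D(A))$, the map $t\mapsto Av(t)$ is continuous, so $h\in C([s,s+\hat{\tau}],X)$, and the hypothesis $v'(t)\geq Av(t)+F(t,v(t))$ gives $h(t)\geq 0$ on $[s,s+\hat{\tau}]$. Moreover $v(t)\in D(A)\subset X_0$ together with $v(t)\geq 0$ shows $v\in C([s,s+\hat{\tau}],X_{0+})$, so $v$ is admissible for the integral-form proposition. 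Now fix any $\gamma>0$ large enough. Because $\gamma B\in\mathcal{L}(X_0,X)$, Theorem \ref{TH2.8} ensures that $A-\gamma B$ satisfies Assumptions \ref{ASS2.2} and \ref{ASS2.3}, with $D(A-\gamma B)=D(A)$; hence $v$ is a classical solution of
$$
v'(t)=(A-\gamma B)v(t)+\left[\gamma Bv(t)+F(t,v(t))+h(t)\right],\quad v(s)=x,
$$
and the variation-of-constants representation for $A-\gamma B$ yields, for all $t\in[s,s+\hat{\tau}]$,
$$
v(t)=T_{(A-\gamma B)_0}(t-s)x+\left(S_{(A-\gamma B)}\diamond\left(\gamma Bv(s+\cdot)+F(s+\cdot,v(s+\cdot))+h(s+\cdot)\right)\right)(t-s).
$$

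Since $h\geq 0$, the positivity property \eqref{eq3.1} (valid for each $\gamma>0$ by Assumption \ref{ASS3.1}(i)) lets me discard the $h$-term without increasing the right-hand side, giving
$$
v(t)\geq T_{(A-\gamma B)_0}(t-s)x+\left(S_{(A-\gamma B)}\diamond\left(\gamma Bv(s+\cdot)+F(s+\cdot,v(s+\cdot))\right)\right)(t-s)
$$
for all $t\in[s,s+\hat{\tau}]$. As this holds for every large $\gamma$, the function $v$ satisfies exactly the hypothesis of the Upper Solution (integral form) proposition, and applying that proposition delivers both conclusions $\tau(s,x)>\hat{\tau}$ and $v(t)\geq U(t,s)x$ on $[s,s+\hat{\tau}]$.

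The step deserving the most care is the variation-of-constants representation: it is where the differential inequality is converted into the integral identity, and it requires $v$ to be a genuine classical solution of the $(A-\gamma B)$-perturbed equation, which is precisely why the regularity $v\in C^1\cap C(\cdot,D(A))$ and the identity $D(A-\gamma B)=D(A)$ are needed. The subtle payoff $\tau(s,x)>\hat{\tau}$ — that an upper solution forces the semiflow to live at least as long as $\hat{\tau}$, rather than merely bounding it on a common interval — is not reproved here; it is inherited directly from the integral-form proposition, and this is exactly the asymmetry between the present statement and the lower-solution Proposition \ref{PROP4.3}.
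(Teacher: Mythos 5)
Your argument is correct and is exactly the intended one: the paper omits the proof of Proposition \ref{PROP4.4}, leaving it as the mirror of Proposition \ref{PROP4.3}, and your reduction via the non-negative defect $h$, the variation-of-constants representation for $A-\gamma B$, the positivity property \eqref{eq3.1}, and the integral-form upper-solution proposition reproduces that argument with the inequalities reversed. Nothing further is needed.
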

\begin{theorem}[Increasing and Decreasing Solutions] \label{TH5.1} Let Assumptions \ref{ASS2.2}, \ref{ASS2.3}, \ref{ASS2.10} and \ref{ASS3.1} be satisfied.  Assume that $F$ is independent of time $t$, and $U:D_U \subset \left[0,+ \infty \right) \times X_0 \to X_0 $ is the autonomous maximal semiflow generated by the abstract Cauchy problem
$$
u^\prime(t)=Au(t)+F(u(t)), \text{ for }t \geq 0, u(0)=x \in \overline{D(A)}.
$$
Assume in addition that
$$
x \in D(A) \cap X_{+}.
$$
Then we have the following properties
\begin{itemize}
\item[{\rm(i)}]$Ax+F(x) \geq 0, \forall t \geq 0$ implies that the map $t \to U(t)x$ is increasing.
\item[{\rm(ii)}]$Ax+F(x) \leq 0, \forall t \geq 0$ implies that the map $t \to U(t)x$ is decreasing.
\end{itemize}
\end{theorem}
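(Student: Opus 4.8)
The plan is to use the constant function $v\equiv x$ as a lower (resp.\ upper) solution, and then to propagate the resulting order relation in time through the monotonicity of the semiflow. The guiding observation is that the differential inequalities $v'\leq Av+F(v)$ and $v'\geq Av+F(v)$ reduce, for a constant $v\equiv x$, to precisely the sign conditions $Ax+F(x)\geq 0$ and $Ax+F(x)\leq 0$ assumed in (i) and (ii).

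For part (i), I would first check that the constant map $v(t)\equiv x$ satisfies all the hypotheses of Proposition~\ref{PROP4.3} with $s=0$. Since $x\in D(A)\cap X_{+}$, the map is non-negative, belongs to $C^1([0,\hat{\tau}],X)$ (it is constant, so $v'=0$) and to $C([0,\hat{\tau}],D(A))$ (as $t\mapsto Av(t)=Ax$ is continuous), and the differential inequality $v'(t)=0\leq Ax+F(x)=Av(t)+F(v(t))$ holds by hypothesis. Proposition~\ref{PROP4.3}, applied on an arbitrary interval, then yields
$$x=v(t)\leq U(t)x,\quad\forall t\in[0,\tau(0,x)).$$
Next I would upgrade this to monotonicity in $t$ via Theorem~\ref{TH3.4}. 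Fixing $h\geq 0$, the relation $0\leq x\leq U(h)x$ together with the monotonicity of the semiflow gives $U(t)x\leq U(t)\bigl(U(h)x\bigr)$ on the common interval of existence, and the autonomous semiflow property identifies $U(t)\bigl(U(h)x\bigr)=U(t+h)x$. Hence $U(t)x\leq U(t+h)x$, which is exactly the statement that $t\mapsto U(t)x$ is increasing.

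Part (ii) is entirely symmetric, now invoking Proposition~\ref{PROP4.4}: the condition $Ax+F(x)\leq 0$ makes $v\equiv x$ an upper solution, so that proposition yields both $\tau(0,x)=+\infty$ (the constant upper solution exists on every interval $[0,\hat\tau]$, forcing $\tau(0,x)>\hat\tau$ for all $\hat\tau$) and $U(t)x\leq x$ for all $t\geq 0$. Combining $0\leq U(h)x\leq x$, where the positivity $U(h)x\geq 0$ comes from Theorem~\ref{TH3.1}, with Theorem~\ref{TH3.4} gives $U(t+h)x=U(t)\bigl(U(h)x\bigr)\leq U(t)x$, so $t\mapsto U(t)x$ is decreasing.

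The only genuinely delicate points are bookkeeping rather than conceptual. First, one must verify that the constant function meets the regularity demanded by Propositions~\ref{PROP4.3}–\ref{PROP4.4}: this is exactly where the hypothesis $x\in D(A)$, rather than merely $x\in X_0$, is essential, since it guarantees $v\equiv x\in C([0,\hat\tau],D(A))$. Second, one must ensure the comparison supplied by Theorem~\ref{TH3.4} is applied only where both orbits are defined; the semiflow identity in Definition~\ref{DE5.1}(i) gives $\tau(0,U(h)x)=\tau(0,x)-h$, so for $0\leq t_1\leq t_2<\tau(0,x)$ the choice $h=t_2-t_1$ and $t=t_1$ lies in the admissible range $[0,\tau(0,x)-h)$, yielding the desired $U(t_1)x\leq U(t_2)x$ (and the reverse inequality in the decreasing case).
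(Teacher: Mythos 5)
Your proposal is correct and follows essentially the same route as the paper: take the constant function $v\equiv x$ as a lower (resp.\ upper) solution, apply Proposition~\ref{PROP4.3} (resp.\ Proposition~\ref{PROP4.4}) to get $x\leq U(t)x$ (resp.\ $x\geq U(t)x$), and then propagate this through the monotone semiflow of Theorem~\ref{TH3.4} together with the semigroup property to conclude $U(t)x\leq U(t+h)x$ (resp.\ the reverse). Your version is slightly more careful than the paper's about the regularity of the constant function and the domains of existence, but the argument is the same.
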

\begin{proof} Set
$$
v(t)=x, \forall t \geq 0.
$$
Observe that
$$
v'(t)=0 \leq Ax+F(x)=Av(t)+F(v(t)), \forall t \geq 0.
$$
Now by using the differential form of comparison principle in Proposition \ref{PROP4.3}, we obtain
$$
x=v(t) \leq U(t)x, \forall t \geq 0
$$
and by applying $U(t')$ on both side of this last inequality we obtain
$$
U(t')x \leq U(t'+t) x, \forall t ,t' \geq 0,
$$
and the result follows.
\end{proof}

\section{Applications to age structured models}
\subsection{SIR epidemic model with infection age}
In this subsection, we intend to use the comparison principle to determine a positively invariant sub-region for the SIR epidemic model with age of infection
\begin{equation} \label{6.3}
\left\lbrace
\begin{array}{l}
S'(t)=\gamma-\nu_S S(t)-\eta S(t)\int_0^{+\infty} \beta(a)i(t,a)da,\\
\partial_t i(t,a)+\partial_a i(t,a)=-\nu_I(a)i(t,a), \text{{\rm for }} a \in (0,\infty)\\
i(t,0)=\eta S(t)\int_0^{+\infty} \beta(a)i(t,a)da, \\
S(0)=S_0\geq0,\ i(0,.)=i_0 \in \Li^1_+(\left(0,+\infty \right), \mathbb{R}).
\end{array}
\right.
\end{equation}
This model was first proposed by  Kermack and McKendrick  \cite{Kermack-McKendrick} in 1927. {The} global dynamic was studied by Magal, McCluskey and Webb \cite{MMW10}. The existence of {an invariant sub-region turn to be crucial to the investigation of the uniform persistence}. Some results based on comparison principle were {stated} in \cite{MMW10} without proof. In the following, we will explain how to derive these invariant sub {regions} by using the above comparison principle.

\begin{assumption}\label{ASS6.16} We assume that the following conditions are satisfied
\begin{itemize}
\item[{\rm(i)}]$\gamma>0$, $\nu_S>0$, $\eta>0$;
\item[{\rm(ii)}]The function $a\rightarrow \beta(a)$ is bounded and uniformly
continuous from $[0,\infty)$ to $[0,\infty)$;
\item[{\rm(iii)}]The function $ \nu_I(\cdot)\in \Li^{+\infty }_+\left(0,+\infty \right)$, and $\nu_I(a)\geq \delta$ for almost every $a\geq 0$ for some $\delta>0$.
\end{itemize}
\end{assumption}

\noindent \textbf{Volterra formulation:} By integrating along the characteristic the $i$-equation we deduce that
\begin{equation*}
i(t,a)=
\left\lbrace
\begin{array}{l}
e^{-\int_{a-t}^{a} \nu_I(\sigma)d\sigma} i_0(a-t), \text{{\rm if }} a-t \geq 0,\\
e^{-\int_{0}^{a} \nu_I(\sigma)d\sigma} B(t-a), \text{{\rm if }} t-a \geq 0,
\end{array}
\right.
\end{equation*}
and $t \to B(t)$ is the unique continuous function satisfying the following Volterra integral equation  for each $t\geq 0$
$$
B(t)=S(t) \left \lbrace \int_t^\infty \beta(a)  e^{-\int_{a-t}^{a} \nu_I(\sigma)d\sigma} i_0(a-t) da + \int_0^t  \beta(a)  e^{-\int_{0}^{a} \nu_I(\sigma)d\sigma} B(t-a) da \right \rbrace.
$$

\noindent \textbf{Integrated semigroup formulation:}
Treating $S(t)$ as a known function for the $i$-equation, we are going to rewrite the $i$-part of system \eqref{6.3} as an Abstract Cauchy problem. Set
$$
{ X=\mathbb{R} \times \Li^1(\left(0,+\infty \right),\mathbb{R})}
$$
endowed with the usual product norm. Consider the linear operator $A:D(A)\subset X \to X$
$$
A
\left(
\begin{array}{c}
0_{\mathbb{R}}\\
i
\end{array}
\right)
=
\left(
\begin{array}{c}
-i(0)\\
-i^\prime -\nu_I i
\end{array}
\right)
$$
with
$$
D(A)=\left\{ 0_{\mathbb{R}} \right\} \times \W^{1,1}(\left(0, +\infty \right), \mathbb{R}).
$$
Then the closure of the domain of $A$ is
$$
X_0:=\overline{D(A)}=\left\{ 0_{\mathbb{R}} \right\} \times \Li^{1}(\left(0, +\infty \right), \mathbb{R}).
$$
{Define $F:\mathbb{R} \times X_0 \to X$ by}
$$
{ F
\left(S,\left(
\begin{array}{c}
0_{\mathbb{R}}\\
i
\end{array}
\right)\right)}
=
\left(
\begin{array}{c}
\eta S\int_0^{+\infty} \beta(a)i(a)da\\
0_{L^1}
\end{array}
\right).
$$
By identifying $i(t,.)$ with $v(t):=
\left(
\begin{array}{c}
0_{\mathbb{R}}\\
i(t,.)
\end{array}
\right)$ we can rewrite the $i$-equation in \eqref{6.3} as the following abstract Cauchy problem
\begin{equation*}
v^\prime(t)=Av(t)+F(S(t),v(t)), \text{ for } t \geq 0, v(0)=\left(
\begin{array}{c}
0_{\mathbb{R}}\\
i_0
\end{array}
\right) \in X_0.
\end{equation*}

\begin{lemma}\label{PROP6.17}Assume that the initial value $S(0)=S_0>0$. Then there exists $S_{+}>S_{-}>0$ such that
$$S_{-} \leq S(t)\leq S_{+},\ \mbox{for}\ t\geq 0.$$
\end{lemma}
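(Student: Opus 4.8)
The plan is to bound $S$ by two scalar differential inequalities, the upper one being immediate and the lower one requiring an a priori uniform bound on the infected population that comes from a mass-conservation identity. Throughout I rely on the positivity of the semiflow from Theorem~\ref{TH3.1}, which gives $i(t,a)\ge 0$; since moreover $S'(t)=\gamma>0$ whenever $S(t)=0$, the susceptible component stays strictly positive, $S(t)>0$ for all $t\ge 0$. It is convenient to set $J(t):=\eta\int_0^{+\infty}\beta(a)i(t,a)\,da\ge 0$, so that the $S$-equation reads $S'(t)=\gamma-(\nu_S+J(t))S(t)$ and the boundary value is $i(t,0)=S(t)J(t)$.

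For the upper bound, since $J(t)\ge 0$ and $S(t)\ge 0$ one has $S'(t)\le \gamma-\nu_S S(t)$. Comparing with the linear scalar ODE $w'=\gamma-\nu_S w$, $w(0)=S_0$ (elementary: $t\mapsto (w-S)(t)e^{\nu_S t}$ is nondecreasing), I obtain $S(t)\le w(t)\le \max(S_0,\gamma/\nu_S)=:S_+$ for all $t\ge 0$.

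The lower bound requires a uniform bound on $J(t)$, hence on $N(t):=\int_0^{+\infty}i(t,a)\,da$. The structural ingredient is mass balance. Integrating the $S$-equation gives $S(t)=S_0+\gamma t-\nu_S\int_0^t S(r)\,dr-\int_0^t i(r,0)\,dr$, while integrating the $i$-equation in $a$ and $r$ yields the infected balance $N(t)=N(0)+\int_0^t i(r,0)\,dr-\int_0^t\!\int_0^{+\infty}\nu_I(a)i(r,a)\,da\,dr$. Adding these, the boundary fluxes cancel, so $W(t):=S(t)+N(t)$ has a $C^1$ right-hand side and satisfies $W'(t)=\gamma-\nu_S S(t)-\int_0^{+\infty}\nu_I(a)i(t,a)\,da\le \gamma-\mu W(t)$, where $\mu:=\min(\nu_S,\delta)>0$ by Assumption~\ref{ASS6.16}(iii). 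Consequently $W(t)\le \max\big(S_0+\Vert i_0\Vert_{\Li^1},\gamma/\mu\big)$, so with $\bar\beta:=\sup_{a\ge 0}\beta(a)<\infty$ we get $N(t)\le \bar N$ and $J(t)\le \eta\bar\beta\bar N=:\bar J$. Then $S'(t)\ge \gamma-(\nu_S+\bar J)S(t)$, and comparison with $z'=\gamma-(\nu_S+\bar J)z$, $z(0)=S_0>0$, gives $S(t)\ge \min\big(S_0,\gamma/(\nu_S+\bar J)\big)=:S_->0$.

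The main obstacle is to justify the infected mass-balance identity rigorously: for $i_0\in\Li^1_+$ the mild solution need not belong to $\W^{1,1}$, so the formal step $\int_0^{+\infty}\partial_a i\,da=-i(t,0)$ (using $i(t,+\infty)=0$) and the direct differentiation of $N(t)$ are not automatic. I would circumvent this either by deriving the integrated identity straight from the Volterra formula via Fubini and a change of variables, which involves no $a$-derivative, or by first establishing it for $i_0\in\W^{1,1}_+$ (where the solution is classical) and then passing to general $i_0$ by density together with the continuous dependence supplied by Theorem~\ref{TH2.12}. Granting this identity, the two scalar comparison arguments for $S$ are routine.
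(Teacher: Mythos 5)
Your proposal is correct and follows essentially the same route as the paper: the immediate comparison $S'\le\gamma-\nu_S S$ for the upper bound, and a bound on $S(t)+\int_0^\infty i(t,a)\,da$ obtained from the mass-balance identity (justified first for classical solutions and extended by density, exactly as the paper does) to control the incidence term and close the lower bound. Your explicit uniform lower bound $S(t)\ge\min\bigl(S_0,\gamma/(\nu_S+\bar J)\bigr)$ for all $t\ge 0$ is in fact slightly cleaner than the paper's concluding $\liminf$ statement, since it is literally what the lemma asserts.
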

\begin{remark} The fact that we assume that $S(0)=S_0>0$ is not a restriction. Indeed, if $S(0)=S_0=0$, one can prove that
$$
S(t)>0, \forall t >0.
$$
Therefore we always replace $0$ by some small positive time $t>0$ and assume that $S_0>0$.
\end{remark}
\begin{proof}We first establish the upper bound of $S(t)$. It is easy to see that
$$S'(t)\leq \gamma-\nu_S S(t).$$
Thus,
\begin{equation*} \label{6.4}
S(t) \leq e^{-\nu_S t}S_0+\int_0^t\gamma e^{-\nu_S (t-\theta)}d\theta= e^{-\nu_S t}S_0+\frac{\gamma}{\nu_S}[1-e^{-\nu_S t}],\ t\geq 0,
\end{equation*}
which implies that
\begin{equation*} \label{6.5}
S(t) \leq S_0+\frac{\gamma}{\nu_S}:=S_{+},\ t\geq 0.
\end{equation*}
Next, we establish the lower bound of $S(t)$.
Define
$$
I(t)=\int_0^{+\infty}i(t,a)da.
$$
By using classical solutions of \eqref{6.3} we deduce that
$$
 \frac{d}{d t}\int_0^{\infty}i(t,a) da= \int_0^{\infty}[-\partial_a i(t,a)-\nu_I(a)i(t,a)] da.
$$
{Using} integration by parts, we deduce that
\begin{equation*} \label{6.6}
I'(t)=\eta S(t)\int_0^{+\infty} \beta(a)i(t,a)da-\int_0^{+\infty} \nu_I(a)i(t,a)da.
\end{equation*}
The formula is true for any mild solution by continuity of the semi-flow generated by \eqref{6.3}, and by density of initial distribution giving a classical solution.

From the above formula we deduce that
\begin{equation*} \label{6.7}
(S+I)'(t)\leq \gamma-\nu_S S(t)-\delta I(t).
\end{equation*}
This implies that $t\rightarrow S(t)+I(t)$ is bounded above by a constant $M>0$.
On the other hand, we have
\begin{eqnarray*}
S'(t) &\geq& \gamma-\nu_S S(t)-\eta \parallel\beta\parallel_{\infty}S(t)I(t)\\
&\geq & \gamma-[\nu_S +\eta \parallel\beta\parallel_{\infty}M]S(t).
\end{eqnarray*}
This implies that
$$\liminf_{t\rightarrow \infty}S(t)\geq \frac{\gamma}{\nu_S +\eta \parallel\beta\parallel_{\infty}M}.$$
\end{proof}

By Lemma~\ref{PROP6.17} and the comparison principle, we have the following result:

\begin{theorem}\label{cor6.18}Assume that $i_{\pm}(t,a)$ satisfies
\begin{equation*} \label{6.8}
\left\lbrace
\begin{array}{l}
\partial_t i_{\pm}(t,a)+\partial_a i_{\pm}(t,a)=-\nu_I(a)i_{\pm}(t,a), \text{ for } a \geq 0,\ t\geq0,\\
i_{\pm}(t,0)=\eta S_{\pm}\int_0^{+\infty} \beta(a)i_{\pm}(t,a)da, \\
i_{\pm}(0,.)=i_0 \in \Li^1_+(\left(0,+\infty \right), \mathbb{R}).
\end{array}
\right.
\end{equation*}
Then
\begin{equation} \label{6.9}
i_{-}(t,a)\leq i(t,a) \leq i_{+}(t,a),\ \mbox{for}\ t\geq0, \ \mbox{for a. e.}\ a\geq 0.
\end{equation}
\end{theorem}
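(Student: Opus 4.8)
The plan is to read both inequalities in \eqref{6.9} as applications of the integral comparison principle of Proposition~\ref{pro4.1}, the whole point being that the infection term depends \emph{monotonically} on the susceptible level $S$ along the positive cone. First I would record the set-up already prepared above: writing $v(t):=(0_{\mathbb{R}},i(t,\cdot))$, the $i$-part of \eqref{6.3} is the (non-autonomous, since $S(\cdot)$ is a known function) abstract problem $v'(t)=Av(t)+F(S(t),v(t))$, with maximal semiflow $U$, while $i_{\pm}$ correspond to $v_{\pm}(t):=(0_{\mathbb{R}},i_{\pm}(t,\cdot))$, the mild solutions of the \emph{autonomous} problems $v_{\pm}'=Av_{\pm}+F(S_{\pm},v_{\pm})$ with semiflows $U_{\pm}$ and common initial datum $(0_{\mathbb{R}},i_0)$. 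I would then check that Assumptions \ref{ASS2.2}--\ref{ASS2.10} hold for this transport operator (standard, cf. Magal--Ruan \cite{Magal-Ruan2018}) and that Assumption \ref{ASS3.1} holds with the simplest choice $B=0$: solving $(\lambda I-A)(0_{\mathbb{R}},i)=(\alpha,\psi)$ explicitly gives $i(a)=e^{-\lambda a-\int_0^a\nu_I}\alpha+\int_0^a e^{-\lambda(a-\sigma)-\int_\sigma^a\nu_I}\psi(\sigma)\,d\sigma\geq 0$ for $(\alpha,\psi)\in X_{+}=\mathbb{R}_+\times\Li^1_+$, so $A$ is resolvent positive; and since $F(S,(0_{\mathbb{R}},i))=(\eta S\int_0^{\infty}\beta i\,da,\,0_{\Li^1})$ has a nonnegative first component nondecreasing in $i\geq 0$ (because $\beta\geq 0$), the map $F(S,\cdot)$ is itself nonnegative and monotone on $X_{0+}$. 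By Theorem \ref{TH3.1} all of $i,i_+,i_-$ stay nonnegative, and because the $i$-equation is linear for frozen $S$ (with $S$ bounded by Lemma \ref{PROP6.17}) all three solutions are global, so $\tau\equiv+\infty$ and the comparison will hold for every $t\geq 0$.

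The structural observation driving everything is the monotone dependence on $S$: for $v=(0_{\mathbb{R}},i)$ with $i\geq 0$ and $\beta\geq 0$, one has $S_1\leq S_2\Rightarrow F(S_1,v)\leq F(S_2,v)$ in the cone $X_{+}$, since only the scalar first component $\eta S\int_0^\infty\beta i\,da$ is affected and $\int_0^\infty\beta i\,da\geq 0$.

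For the upper bound $i\leq i_+$ I would use $S(t)\leq S_+$. Representing the true solution through variation of constants (with $\gamma B=0$), $v(t)=T_{A_0}(t)(0_{\mathbb{R}},i_0)+\bigl(S_A\diamond F(S(\cdot),v(\cdot))\bigr)(t)$, the monotonicity in $S$ gives $F(S(t),v(t))\leq F(S_+,v(t))$, and then the positivity/monotonicity of the convolution operator \eqref{eq3.1} yields $v(t)\leq T_{A_0}(t)(0_{\mathbb{R}},i_0)+\bigl(S_A\diamond F(S_+,v(\cdot))\bigr)(t)$. This is exactly the lower-solution inequality of Proposition \ref{pro4.1} for the $S_+$-semiflow, so $v(t)\leq U_+(t,0)(0_{\mathbb{R}},i_0)=v_+(t)$, i.e. $i(t,a)\leq i_+(t,a)$. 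The lower bound $i_-\leq i$ is symmetric: now $S_-\leq S(t)$ gives $F(S_-,v_-(t))\leq F(S(t),v_-(t))$, so $v_-$ satisfies the integral lower-solution inequality for the non-autonomous $S(t)$-semiflow $U$, and Proposition \ref{pro4.1} gives $v_-(t)\leq U(t,0)(0_{\mathbb{R}},i_0)=v(t)$, i.e. $i_-(t,a)\leq i(t,a)$. Combining the two inequalities proves \eqref{6.9}.

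The step I expect to be the main obstacle is not the analytic heart --- the monotone dependence of the infection term on $S$ is one line --- but the bookkeeping that makes Proposition \ref{pro4.1} literally applicable to each comparison: confirming that $i$ and $i_{\pm}$ are genuinely the mild (integrated) solutions of the respective abstract Cauchy problems, that the non-autonomous principle applies with the time-dependent nonlinearity $F(S(t),\cdot)$ (uniformly Lipschitz in $v$ because $S$ is bounded on $[0,\infty)$ by Lemma \ref{PROP6.17}), and that the convolution monotonicity \eqref{eq3.1} is invoked on the correct pair of \emph{nonnegative continuous} integrands so that the hypothesis ``$v(t)\leq T\,x+S\diamond(\cdots)$'' of Proposition \ref{pro4.1} is matched verbatim in each direction.
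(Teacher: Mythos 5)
Your proposal is correct and is essentially the route the paper itself intends: the paper states the theorem as a direct consequence of Lemma~\ref{PROP6.17} and the integral comparison principle (Proposition~\ref{pro4.1}), and your argument simply supplies the details that are omitted --- the choice $B=0$ (legitimate here since the mortality term is absorbed into $A$ and the remaining birth/infection term is already nonnegative and monotone), the resolvent positivity of $A$, and the monotone dependence of $F(S,v)$ on $S$ which turns $v$ into a lower solution for the $S_{+}$-problem and $v_{-}$ into a lower solution for the non-autonomous $S(t)$-problem. The only alternative the paper offers is the Volterra-iteration argument sketched in Remark~\ref{REM6.19}, which you do not need.
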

\begin{remark} The above Theorem is general and can be used for example in the context of ecology whenever $i(t,a)$ is the distribution of population and $S(t)$ 
represents a time dependent reproduction rate.
\end{remark}
\begin{remark} \label{REM6.19} One can prove the above result by using a Volterra integral formulation approach. Consider the function $t \to B(t)$ is the unique continuous function satisfying for each $t\geq 0$
$$
B(t)=S(t) \left \lbrace \int_t^\infty \beta(a)  e^{-\int_{a-t}^{a} \nu_I(\sigma)d\sigma} i_0(a-t) da + \int_0^t   \beta(a) e^{-\int_{0}^{a} \nu_I(\sigma)d\sigma} B(t-a) da \right \rbrace.
$$
We can for example derive the following upper estimation
$$
B(t) \leq S^+ \left \lbrace \int_t^\infty \beta(a)  e^{-\int_{a-t}^{a} \nu_I(\sigma)d\sigma} i_0(a-t) da + \int_0^t  e^{-\int_{0}^{a} \nu_I(\sigma)d\sigma} B(t-a) da \right \rbrace.
$$
{By} using a standard iteration procedure
$$
B^n(t)=S^+ \left \lbrace \int_t^\infty \beta(a)  e^{-\int_{a-t}^{a} \nu_I(\sigma)d\sigma} i_0(a-t) da + \int_0^t  e^{-\int_{0}^{a} \nu_I(\sigma)d\sigma} B^{n-1}(t-a) da \right \rbrace,
$$
with
$$
B^0(t)=B(t)
$$
it follows that the sequence $B^n$ is increasing, that is to say
$$
B^n(t)\leq B^{n+1}(t), \forall t \geq 0, \forall n \geq 0.
$$
{Letting $n$ go to $\infty$, the} sequence $B^n$ converges and we have
$$
\lim_{n \to \infty }B^n(t) =B^+(t) \text{ locally uniformly in } t \in \mathbb{R},
$$
where $t \to B^+(t)$ is the unique continuous function satisfying the Volterra integral equation
$$
B^+(t)= S^+ \left \lbrace \int_t^\infty \beta(a)  e^{-\int_{a-t}^{a} \nu_I(\sigma)d\sigma} i_0(a-t) da + \int_0^t  e^{-\int_{0}^{a} \nu_I(\sigma)d\sigma} B^+(t-a) da \right \rbrace.
$$
Now we deduce that
$$
i(t,a) \leq i^+(t,a),
$$
{where}
\begin{equation}
 i^+(t,a)=
\left\lbrace
\begin{array}{l}
e^{-\int_{a-t}^{a} \nu_I(\sigma)d\sigma} i_0(a-t), \text{{\rm if }} a-t \geq 0,\\
e^{-\int_{0}^{a} \nu_I(\sigma)d\sigma} B^+(t-a), \text{{\rm if }} t-a \geq 0.
\end{array}
\right.
\end{equation}

\end{remark}

As a consequence we can determine the invariant sub-region that can serve to describe the uniform persistence properties of the system. Let
$$
\Gamma_I^{\pm}(a)=\eta S_{\pm}\int_{a}^{\infty}e^{-\int_a^\theta[\nu_I(\sigma)+\lambda^{\pm}]d \sigma}\beta(\theta)d\theta,
$$ where $\lambda^{\pm}$ is chosen to satisfy that $\Gamma_I^{\pm}(0)=1$. That is to say that $\lambda^{\pm} \in \mathbb{R}$ satisfies
$$
\eta S_{\pm}\int_{0}^{\infty}e^{-\int_0^\theta[\nu_I(\sigma)+\lambda^{\pm}]d \sigma}\beta(\theta)d\theta=1.
$$
\begin{remark} $\lambda^{\pm} \in \mathbb{R}$ is the dominant eigenvalue of  $v \to  Av+F(S_{\pm},v)$. The above integral equation corresponds to the characteristic equation of this linear operator.
\end{remark}
\begin{lemma} For each $t \geq 0$,
\begin{equation} \label{6.10}
\int_0^{\infty}\Gamma_I^{\pm}(a)i_{\pm}(t,a) da=e^{\lambda^{\pm}t}\int_0^{\infty}\Gamma_I^{\pm}(a)i_0(a) da.
\end{equation}
\end{lemma}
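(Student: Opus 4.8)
The plan is to show that the weighted total
\[
P(t) := \int_0^\infty \Gamma_I^{\pm}(a)\, i_{\pm}(t,a)\,da
\]
satisfies the scalar linear ODE $P'(t)=\lambda^{\pm}P(t)$; integrating this from $0$ to $t$ and recalling $i_{\pm}(0,\cdot)=i_0$ then yields exactly \eqref{6.10}. Here $\Gamma_I^{\pm}$ plays the role of the reproductive-value (left eigenfunction) weight attached to the dominant eigenvalue $\lambda^{\pm}$, so weighting the age distribution by it collapses the transport dynamics to pure exponential growth at rate $\lambda^{\pm}$. As in the derivation of the formula for $I'(t)$ in the proof of Lemma~\ref{PROP6.17}, I would first carry out the computation for classical solutions, i.e.\ for initial data $i_0\in \W^{1,1}$ (with sufficient decay at $+\infty$), for which $a\mapsto i_{\pm}(t,a)$ stays in $\W^{1,1}$ and the manipulations below are licit, and then extend to arbitrary $i_0\in \Li^1_+$ by density together with the continuity of the semiflow, since both sides of \eqref{6.10} depend continuously on $i_0$.

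The core computation runs as follows. Differentiating under the integral and using the $i_{\pm}$-equation $\partial_t i_{\pm}=-\partial_a i_{\pm}-\nu_I i_{\pm}$ gives
\[
P'(t)=-\int_0^\infty \Gamma_I^{\pm}(a)\,\partial_a i_{\pm}(t,a)\,da-\int_0^\infty \Gamma_I^{\pm}(a)\,\nu_I(a)\, i_{\pm}(t,a)\,da.
\]
Integrating the first integral by parts, using $\Gamma_I^{\pm}(0)=1$ and that the boundary term at $+\infty$ vanishes, it equals $i_{\pm}(t,0)+\int_0^\infty (\Gamma_I^{\pm})'(a)\, i_{\pm}(t,a)\,da$. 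The key algebraic input is that $\Gamma_I^{\pm}$ solves the first-order ODE
\[
(\Gamma_I^{\pm})'(a)=[\nu_I(a)+\lambda^{\pm}]\,\Gamma_I^{\pm}(a)-\eta S_{\pm}\,\beta(a),
\]
which one checks by differentiating its defining integral: the lower-limit term produces $-\eta S_{\pm}\beta(a)$ (the exponential equals $1$ at $\theta=a$), and differentiating the exponent produces the factor $\nu_I(a)+\lambda^{\pm}$. Substituting this expression for $(\Gamma_I^{\pm})'$, the two integrals carrying $\nu_I(a)\Gamma_I^{\pm}(a)$ cancel, leaving
\[
P'(t)=i_{\pm}(t,0)-\eta S_{\pm}\int_0^\infty \beta(a)\, i_{\pm}(t,a)\,da+\lambda^{\pm}\int_0^\infty \Gamma_I^{\pm}(a)\, i_{\pm}(t,a)\,da.
\]
Finally the renewal boundary condition $i_{\pm}(t,0)=\eta S_{\pm}\int_0^\infty \beta(a)\, i_{\pm}(t,a)\,da$ cancels the first two terms, producing $P'(t)=\lambda^{\pm}P(t)$, as required.

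The two delicate points, and the main obstacle, are the regularity used to differentiate $P$ and to integrate by parts, together with the vanishing of the boundary term at $a=+\infty$. Restricting to classical solutions is exactly what makes this clean: for $i_0\in \W^{1,1}$ the solution $i_{\pm}(t,\cdot)$ remains in $\W^{1,1}$, and since $\Gamma_I^{\pm}$ is $C^1$ and bounded with bounded derivative (under Assumption~\ref{ASS6.16} the bounds $\nu_I\geq\delta$ and $\beta$ bounded make the defining integral converge), the product $a\mapsto \Gamma_I^{\pm}(a)\, i_{\pm}(t,a)$ lies in $\W^{1,1}(0,\infty)$ and therefore tends to $0$ as $a\to+\infty$. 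The density and continuity passage to general $i_0\in \Li^1_+$ then removes the extra regularity and completes the proof.
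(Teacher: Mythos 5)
Your proof is correct and follows essentially the same route as the paper's: differentiate the weighted integral along classical solutions, integrate by parts, use the ODE $(\Gamma_I^{\pm})'=[\nu_I+\lambda^{\pm}]\Gamma_I^{\pm}-\eta S_{\pm}\beta$ together with $\Gamma_I^{\pm}(0)=1$ and the renewal boundary condition to obtain $P'=\lambda^{\pm}P$, and then extend to general $i_0\in \Li^1_+$ by density of initial data giving classical solutions. Your additional remarks on the vanishing boundary term at $a=+\infty$ and on why $\Gamma_I^{\pm}$ is bounded are details the paper leaves implicit, but they do not change the argument.
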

\begin{proof}
The function $ a \to \Gamma_I^{\pm}(a)$ satisfies
\begin{equation*}
\left\lbrace
\begin{array}{ll}
(\Gamma_I^{\pm})'(a)=[\nu_I(a)+\lambda^{\pm}]\Gamma_I^{\pm}(a)-\eta S_{\pm}\beta(a),\ \mbox{for a. e.}\ a\geq 0,\\
\Gamma_I^{\pm}(0)=1.
\end{array}
\right.
\end{equation*}
By using classical solutions of \eqref{6.8} we deduce that
$$
 \frac{d}{d t}\int_0^{\infty}\Gamma_I^{\pm}(a)i_{\pm}(t,a) da= \int_0^{\infty}\Gamma_I^{\pm}(a)[-\partial_a i_{\pm}(t,a)-\nu_I(a)i_{\pm}(t,a)] da.
$$
{Using} integration by parts, it follows that
\begin{eqnarray*}
 \frac{d}{d t}\int_0^{\infty}\Gamma_I^{\pm}(a)i_{\pm}(t,a) da
&=&\Gamma_I^{\pm}(0) i_{\pm}(t,0)+\int_0^{\infty}(\Gamma_I^{\pm})'(a)i_{\pm}(t,a) da\\
&-&\int_0^{\infty}\Gamma_I^{\pm}(a)\nu_I(a)i_{\pm}(t,a) da.
\end{eqnarray*}
Using the facts $\Gamma_I^{\pm}(0)=1$ and $i_{\pm}(t,0)=\eta S_{\pm}\int_0^{+\infty} \beta(a)i_{\pm}(t,a)da$, we have
\begin{equation*}
\frac{d}{d t}\int_0^{\infty}\Gamma_I^{\pm}(a)i_{\pm}(t,a) da=\lambda^{\pm}\int_0^{\infty}\Gamma_I^{\pm}(a)i_{\pm}(t,a) da,
\end{equation*}
{and} the result follows {from} the fact that the set of initial values giving a classical solution is dense in $L^1$.
\end{proof}

Assume that $\beta \neq 0$. Let
$$
a^\star:=\sup \left\{a>0: \int_a^{\infty}\beta(\sigma) e^{-\sigma} d\sigma >0  \right\} \in (0, \infty].
$$
\begin{remark} In practice, the number $a^\star$ corresponds to the maximal value {at which some new infection can still be produced} (possibly in the future). { This  means} that $a^\star=\infty$ if and only if for each $a \geq 0$ there exists $\widehat{a}>a$ such that
$$
\beta(\widehat{a})>0.
$$
If $a^\star<\infty$ then
$$
\beta(a)=0, \forall a \geq a^\star,
$$
and for each $a \in [0,a^\star)$ there exists $\widehat{a} \in (a, a^\star)$ such that
$$
\beta(\widehat{a})>0.
$$
\end{remark}
Define the interior sub-domain
$$
\widehat{M}_0=\left \{i \in L^1_+(0,+\infty): \int_0^{a^\star} i(a)da>0 \right\},
$$
and the boundary sub-domain
$$
\partial \widehat{M}_0=\left \{i \in L^1_+(0,+\infty): \int_0^{a^\star} i(a)da=0 \right\}.
$$
Actually the boundary domain $\partial \widehat{M}_0$ corresponds to a case where the distribution $i_0$ contains only people that will not produce new infected individuals. {However, some infected people in the interior region} will produce new infected {individuals}. Due to the irreducible structured of the semiflow generated by the $i$-equation  {we can obtain the invariance of $\partial \widehat{M}_0$ and $\widehat{M}_0$.}

The following theorem was stated without proof in Magal et al. \cite[Lemma 2.3]{MMW10}. The {comparison principle was actually applied implicitly to deriving} such a result.
\begin{theorem}\label{Th6.23} The domains $\left[ 0,\infty \right) \times \widehat{M}_0 $ and  $\left[ 0,\infty \right) \times  \partial \widehat{M}_0 $  are positively invariant by the semiflow generated by \eqref{6.3}. {That is} to say that
$$
\int_0^{a^\star} i_0(a)da>0 \Rightarrow \int_0^{a^\star} i(t,a)da>0, \forall t \geq 0
$$
and
$$
\int_0^{a^\star} i_0(a)da=0 \Rightarrow \int_0^{a^\star} i(t,a)da=0, \forall t \geq 0.
$$
Moreover if $\int_0^{a^\star} i_0(a)da=0$ then
$$
\int_0^{\infty} \beta(a)i(t,a)da=0, \forall t \geq 0
$$
and the solution is explicitly given by
\begin{equation}
i(t,a)=
\left\lbrace
\begin{array}{l}
e^{-\int_{a-t}^{a} \nu_I(\sigma)d\sigma} i_0(a-t), \text{{\rm if }} a-t \geq 0,\\
0, \text{{\rm if }} t-a \geq 0,
\end{array}
\right.
\end{equation}
therefore
$$
\lim_{t \to \infty} \Vert i(t,.) \Vert_{L^1}=0
$$
\end{theorem}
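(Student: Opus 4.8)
The plan is to reduce both invariance statements to the spectral identity \eqref{6.10} applied to the comparison solutions $i_\pm$, and then to obtain the ``moreover'' part from the boundary case by an explicit integration along the characteristics. Throughout I use Lemma~\ref{PROP6.17} to guarantee $S_- \leq S(t) \leq S_+$ with $S_\pm>0$, so that the comparison $i_-(t,a) \leq i(t,a) \leq i_+(t,a)$ of Theorem~\ref{cor6.18} is available.

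First I would record the support properties of the weights $\Gamma_I^\pm$. Using the remark characterizing $a^\star$ together with the uniform continuity of $\beta$, I claim $\Gamma_I^\pm(a) > 0$ for every $a \in [0,a^\star)$ and $\Gamma_I^\pm(a)=0$ for every $a \geq a^\star$: the first holds because for $a<a^\star$ there is $\widehat{a}\in(a,a^\star)$ with $\beta(\widehat{a})>0$, so by continuity $\beta$ is positive on a neighborhood and the defining integral of $\Gamma_I^\pm(a)$ is strictly positive; the second holds because $\beta$ vanishes on $[a^\star,\infty)$. I would also note, using $\nu_I\geq\delta$, that $\Gamma_I^\pm$ is bounded on $[0,a^\star)$. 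Consequently $\int_0^\infty \Gamma_I^\pm(a) i_0(a)\,da = \int_0^{a^\star}\Gamma_I^\pm(a)i_0(a)\,da$, and since $i_0\geq 0$ this quantity is strictly positive exactly when $\int_0^{a^\star} i_0(a)\,da>0$ and equals zero exactly when $\int_0^{a^\star} i_0(a)\,da=0$.

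Next I would dispatch both invariances simultaneously. For the interior case, identity \eqref{6.10} applied to $i_-$ gives $\int_0^{a^\star}\Gamma_I^-(a)i_-(t,a)\,da = e^{\lambda^- t}\int_0^{a^\star}\Gamma_I^-(a)i_0(a)\,da>0$; since $\Gamma_I^-$ is bounded on $[0,a^\star)$ and $i_-\geq 0$, this forces $\int_0^{a^\star} i_-(t,a)\,da>0$, and then $i\geq i_-\geq 0$ yields $\int_0^{a^\star} i(t,a)\,da>0$. For the boundary case, identity \eqref{6.10} applied to $i_+$ gives $\int_0^{a^\star}\Gamma_I^+(a)i_+(t,a)\,da = e^{\lambda^+ t}\cdot 0 = 0$; since $\Gamma_I^+>0$ on $[0,a^\star)$ and $i_+\geq 0$, this forces $i_+(t,\cdot)=0$ a.e. on $[0,a^\star)$, whence $0\leq i\leq i_+$ gives $\int_0^{a^\star} i(t,a)\,da=0$. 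This establishes the positive invariance of both $[0,\infty)\times\widehat{M}_0$ and $[0,\infty)\times\partial\widehat{M}_0$.

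Finally, for the ``moreover'' part I would use the boundary conclusion $i(t,\cdot)=0$ a.e. on $[0,a^\star)$. Because $\beta$ is supported in $[0,a^\star)$ while $i(t,\cdot)$ vanishes there, the product $\beta(a)i(t,a)$ is zero a.e., so $\int_0^\infty \beta(a)i(t,a)\,da=0$ for all $t\geq 0$; hence $B(t)=i(t,0)=\eta S(t)\int_0^\infty\beta i=0$. Inserting $B\equiv 0$ into the characteristic representation of $i$ produces exactly the stated explicit formula, and the lower bound $\nu_I\geq\delta$ then gives $\|i(t,\cdot)\|_{L^1}=\int_t^\infty e^{-\int_{a-t}^{a}\nu_I(\sigma)d\sigma}i_0(a-t)\,da\leq e^{-\delta t}\|i_0\|_{L^1}\to 0$. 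I expect the main obstacle to be the spectral/support step of the second paragraph, namely proving cleanly that $\Gamma_I^\pm$ is strictly positive precisely on $[0,a^\star)$ and vanishes beyond, so that the weighted integral genuinely detects the mass on $[0,a^\star)$, together with the measure-theoretic passage from ``weighted integral positive (resp. zero)'' to ``unweighted mass positive (resp. zero)''. Once this is in place, the comparison inequalities and the characteristic formula are routine.
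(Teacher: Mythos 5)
Your proof is correct and follows essentially the same route as the paper: both reduce the two invariance statements to the exponential identity \eqref{6.10} for the comparison solutions $i_\pm$ of Theorem~\ref{cor6.18}, using that $\Gamma_I^\pm$ is strictly positive on $[0,a^\star)$ and vanishes on $[a^\star,\infty)$. You in fact supply more detail than the paper (the support analysis of $\Gamma_I^\pm$ and the entire ``moreover'' clause are left implicit there); the only step you omit is the paper's one-line reduction of the case $S_0=0$, where one replaces the initial time by a small $t^\star>0$ so that $S(t^\star)>0$ before invoking Lemma~\ref{PROP6.17}.
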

\begin{proof}Assume first that $S_0>0$. {In case where} $i_0 \in \partial \widehat{M}_0$. Then
$$
\int_0^{\infty}\Gamma_I^{+}(a)i_0(a) da=0.
$$
Moreover
$$
\int_0^{\infty}\Gamma_I^{+}(a)i(t,a) da \leq \int_0^{\infty}\Gamma_I^{+}(a)i^+(t,a) da =e^{\lambda^{+}t}\int_0^{\infty}\Gamma_I^{\pm}(a)i_0(a) da=0
$$
therefore
$$
\int_0^{\infty}\Gamma_I^{+}(a)i(t,a) da =0, \forall t \geq 0.
$$
Hence
$$
i(t,.) \in \partial \widehat{M}_0, \forall t \geq 0.
$$
{In case where $i_0 \in  \widehat{M}_0$. Then it follows that}
$$
\int_0^{\infty}\Gamma_I^{-}(a)i_0(a) da>0,
$$
and
$$
\int_0^{\infty}\Gamma_I^{-}(a)i(t,a) da \geq \int_0^{\infty}\Gamma_I^{-}(a)i^-(t,a) da =e^{\lambda^{-}t}\int_0^{\infty}\Gamma_I^{-}(a)i_0(a) da>0.
$$
Thus,
 $$
i(t,.) \in \widehat{M}_0, \forall t \geq 0.
$$
If $S_0=0$ and $i_0 \in \widehat{M}_0$ we can replace the initial time by any $t^\star>0$ small enough and we will have $S(t^\star)>0$ and $i(t^\star,.) \in  \widehat{M}_0$.
\end{proof}
\subsection{HIV infection model with infection age}
In this subsection, we will use {the} comparison principle to {determine} invariant sub-regions that can serve to deal with the asymptotic properties of {the following model presented in \cite{Huang-Takeuchi}:}
\begin{equation}\label{y.6}
\left\lbrace
\begin{array}{l}
T'(t)=s-d T(t)-k T(t)V(t),\\
\partial_t i(t,a)+\partial_a i(t,a)=-\delta(a)i(t,a), \text{{\rm for }} a \in (0,\infty)\\
i(t,0)=k T(t)V(t), \\
V'(t)=\int_0^{+\infty} p(a)i(t,a)da-cV(t)\\
T(0)=T_0\geq0,\ i(0,.)=i_0 \in \Li^1_+(\left(0,+\infty \right), \mathbb{R}), V(0)=V_0 \geq 0.
\end{array}
\right.
\end{equation}

\begin{assumption}\label{ASSy.11} We assume that the following conditions are satisfied
\begin{itemize}
\item[{\rm(i)}]$s>0$, $d>0$, $c>0$ and $k>0$;
\item[{\rm(ii)}]The function $a\rightarrow p(a)$ is bounded, uniformly
continuous from $[0,\infty)$ to $[0,\infty)$ and not identically zero;
\item[{\rm(iii)}]The function $ \delta(\cdot)\in \Li^{\infty }_+(\left(0,+\infty \right),\mathbb{R})$, and $\delta(a)\geq \delta_0$ for almost every $a\geq 0$ for some $\delta_0>0$.
\end{itemize}
\end{assumption}

\noindent \textbf{Volterra formulation:} Integrating the $i$-equation of {system \eqref{y.6} along} the characteristic lines gives
\begin{equation*}
i(t,a)=
\left\lbrace
\begin{array}{llll}
e^{-\int_{a-t}^{a} \delta(l)dl} i_0(a-t), & \text{if} &  a-t\geq 0,\\
e^{-\int_{0}^{a} \delta(l)dl} k T(t-a) V(t-a), & \text{if} & t-a> 0.
\end{array}
\right.
\end{equation*}
with $t\in \mathbb{R}_+ \mapsto (T(t),V(t),B(t))$ the unique continuous function satisfying for each $t\geq 0$ the following Volterra integral system
\begin{equation} \label{y.7}
\left\lbrace
\begin{array}{l}
\displaystyle T(t)=e^{-d t}T_0+\int_0^t e^{-d a} [s-k T(t-a)V(t-a)] da \\
\displaystyle V(t)=e^{-ct }V_0+\int_0^t e^{-ca} B(t-a)da\\
\displaystyle B(t)=\int_t^\infty p(a)  e^{-\int_{a-t}^{a} \delta(l)dl} i_0(a-t) da + \int_0^t  p(a) e^{-\int_{0}^{a} \delta(l)dl} B(t-a) da.
\end{array}
\right.
\end{equation}
\noindent \textbf{Integrated semigroup formulation:}
We will rewrite the $V$-equation and the $i$-equation as an abstract Cauchy problem. To do so we will consider $t\rightarrow T(t)$ as a known function. Set
$$
X=\mathbb{R} \times \mathbb{R}\times \Li^1(\left(0,+\infty \right),\mathbb{R})
$$
endowed with the usual product norm. Let $A:D(A)\subset X \to X$ be the linear operator defined by
$$
A
\left( \begin{array}{c}
V\\
\left(
\begin{array}{c}
0_{\mathbb{R}}\\
i
\end{array}
\right)
\end{array}\right)
=
\left( \begin{array}{c}
-cV\\
\left(
\begin{array}{c}
-i(0)\\
-i^\prime -\delta  i
\end{array}
\right)
\end{array}\right)
$$
and
$$
D(A)=\mathbb{R}\times \left\{ 0_{\mathbb{R}} \right\} \times \W^{1,1}(\left(0, +\infty \right), \mathbb{R}).
$$
Observe that
$$
\overline{D(A)}=\mathbb{R}\times\left\{ 0_{\mathbb{R}} \right\} \times \Li^{1}(\left(0, +\infty \right), \mathbb{R}) \varsubsetneq  X
$$
and  we set
$$
X_0:=\overline{D(A)}.
$$
To account the boundary condition and the non linearity we consider the map $F:\mathbb{R}\times X_0 \to X$ defined by
$$
F
\left(T,\left( \begin{array}{c}
V\\
\left(
\begin{array}{c}
0_{\mathbb{R}}\\
i
\end{array}
\right)
\end{array}\right)
\right)
=
\left( \begin{array}{c}
\int_0^{+\infty} p(a)i(a)da\\
\left(
\begin{array}{c}
k T V\\
0_{\Li^1}
\end{array}
\right)
\end{array}\right).
$$
Hence identifying $(V(t),i(t,.))$ and $(V_0,i_0)$ respectively with
$$
u(t):=
\left( \begin{array}{c}
V(t)\\
\left(
\begin{array}{c}
0_{\mathbb{R}}\\
i(,.)
\end{array}
\right)
\end{array}\right)\ \text{ and } \ u_0:=
\left( \begin{array}{c}
V_0\\
\left(
\begin{array}{c}
0_{\mathbb{R}}\\
i_0
\end{array}
\right)
\end{array}\right),
$$
we can rewrite the $V$-equation and the $i$-equation  in  \eqref{y.7}  as the following abstract Cauchy problem
\begin{equation*}
u^\prime(t)=Au(t)+F(T(t),u(t)), \text{ for } t \geq 0, \ u(0)=u_0 \in X_0.
\end{equation*}
\textbf{Boundedness property and invariant sub-regions:}
 Define
$$
I(t):=\int_0^{+\infty} i(t,a) da,\ \forall t>0 \ \text{ and } \ I(0)=I_0:=\int_{0}^{+\infty} i_0(a) da.
$$
Then by using the classical solutions we obtain that  the map $t\rightarrow (T(t),I(t),V(t))$ satisfies the following system
$$
\left\lbrace
\begin{array}{lll}
T'(t)=s-d T(t)-k T(t)V(t),\ t>0\\
I'(t)=k T(t)V(t)-\int_0^{+\infty}\delta(a)i(t,a)da,  \ t>0\\
V'(t)=\int_0^{+\infty} p(a)i(t,a)da-cV(t),\ t>0\\
T(0)=T_0\geq0,\ i(0,.)=i_0 \in \Li^1_+(\left(0,+\infty \right), \mathbb{R}), V(0)=V_0 \geq 0.
\end{array}
\right.
$$
By {adding up the equations of $T$ and $I$, we get}
$$
(T(t)+I(t))'=s-d T(t)-\int_0^{+\infty}\delta(a)i(t,a)da\leq s-d T(t)-\delta_0 I(t),\ \forall t>0.
$$
Hence by setting
$$
d_0:=\min(d,\delta_0)
$$
we obtain for each $t\geq 0$  that
$$
T(t)+I(t) \leq e^{-d_0 t}(T_0+I_0)+\int_0^t e^{-d_0 (t-a)} s da=e^{-d_0 t}(T_0+I_0)+\dfrac{s}{d_0} \left(1-e^{-d_0 t}\right)
$$
which implies that
\begin{equation}\label{y.8}
T(t)+I(t)\leq \max\left(T_0+I_0,\dfrac{s}{d_0}\right),\ \forall t\geq 0.
\end{equation}
{In view of \eqref{y.8}, it follows from the $V$-equation that}
$$
V'(t)\leq \Vert p \Vert_{\infty} \max\left(T_0+I_0,\dfrac{s}{d_0}\right) -cV(t),\ \forall t>0,\ V(0)=V_0.
$$
{By the same arguments as above, we have}
\begin{equation}\label{y.9}
V(t)\leq \max\left\lbrace V_0, \dfrac{\Vert p \Vert_{\infty}}{c}\max\left(T_0+I_0,\dfrac{s}{d_0}\right) \right\rbrace,\ \forall t\geq 0.
\end{equation}
{Since the set of initial conditions giving classical solutions is dense in $L^1$, we see} that \eqref{y.8} and \eqref{y.9} still hold true for any given non negative initial conditions of \eqref{y.6}. This will allow us to prove the next result.
\begin{lemma}\label{PROPy.12a} Assume that the initial value $T(0)=T_0>0$. Then there exists $T_{+}>T_{-}>0$ such that
$$T_{-} \leq T(t)\leq T_{+},\ \mbox{for}\ t\geq 0.$$
\end{lemma}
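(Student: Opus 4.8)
The plan is to follow the structure of the proof of Lemma~\ref{PROP6.17}, establishing the upper and lower bounds for $T(t)$ separately from the scalar differential inequalities carried by the $T$-equation in \eqref{y.6}, and feeding in the a priori bound \eqref{y.9} on $V(t)$ that has already been obtained. As in the passage preceding this lemma, I would argue first for classical solutions, for which the pointwise manipulations are legitimate, and then transfer the conclusion to mild solutions by the density of initial data giving classical solutions together with the continuity of the semiflow.

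First I would derive the upper bound. Since the solutions are nonnegative, so that $T(t)\geq 0$ and $V(t)\geq 0$, the term $-kT(t)V(t)$ is nonpositive and the $T$-equation gives $T'(t)\leq s-dT(t)$ for $t\geq 0$. Comparing with the linear equation $w'=s-dw$, $w(0)=T_0$, then yields
$$
T(t)\leq e^{-dt}T_0+\frac{s}{d}\left(1-e^{-dt}\right)\leq T_0+\frac{s}{d}=:T_{+},\quad t\geq 0.
$$

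For the lower bound I would use that $V$ is bounded: by \eqref{y.9} there is a constant $V_{+}>0$ with $V(t)\leq V_{+}$ for all $t\geq 0$. Using $T(t)\geq 0$ and $V(t)\leq V_{+}$ in the $T$-equation gives $T'(t)\geq s-\left(d+kV_{+}\right)T(t)$, and comparing with $w'=s-(d+kV_{+})w$, $w(0)=T_0$, produces
$$
T(t)\geq e^{-(d+kV_{+})t}T_0+\frac{s}{d+kV_{+}}\left(1-e^{-(d+kV_{+})t}\right)\geq \min\left(T_0,\frac{s}{d+kV_{+}}\right)=:T_{-}.
$$
Because the right-hand side is a convex combination of the two positive quantities $T_0$ and $s/(d+kV_{+})$, it exceeds their minimum for every $t\geq 0$; since $T_0>0$ and $s>0$, the bound $T_{-}>0$ is uniform in $t$, so here there is no need to pass through a $\liminf$ argument as in Lemma~\ref{PROP6.17}.

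The estimates themselves are routine; the points requiring care are the inputs rather than the computations. I would need the nonnegativity of $(T,I,V)$, which signs $-kTV$ in the upper estimate and licenses replacing $V$ by its bound $V_{+}$ in the lower estimate, and which follows from the positivity of the semiflow; and the a priori upper bound \eqref{y.9} on $V$, whose derivation is already in place. The only genuine subtlety, shared with Lemma~\ref{PROP6.17}, is the transfer of the two differential inequalities from classical solutions to general mild solutions by density and continuity. Once this is granted, the chain $T_{-}\leq T(t)\leq T_{+}$ holds for all $t\geq 0$ with $T_{+}>T_{-}>0$, completing the proof.
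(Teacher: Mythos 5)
Your proof is correct and follows essentially the same route as the paper's: the lower bound is obtained exactly as in the paper by feeding the a priori bound \eqref{y.9} on $V$ into the $T$-equation to get $T'(t)\geq s-(d+kV_+)T(t)$ and taking $T_-=\min\left(T_0,\frac{s}{d+kV_+}\right)$. The only cosmetic difference is the upper bound, which the paper reads off from the bound \eqref{y.8} on $T+I$ while you derive it directly from $T'(t)\leq s-dT(t)$; both are immediate.
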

\begin{proof}
The upper bound of $t\rightarrow T(t)$ follows from \eqref{y.8}. {We are in a position to determine the lower bound of $t\rightarrow T(t)$. From \eqref{y.9}, one has}
$$
0\leq V(t) \leq \dfrac{\Vert p \Vert_{\infty}}{c}\max\left(T_0+I_0,\dfrac{s}{d_0}\right),\ \forall t\geq 0.
$$
{By using the $T$-equation in \eqref{y.6}, and setting}
$$
d_1:=d+k \dfrac{\Vert p \Vert_{\infty}}{c}\max\left(T_0+I_0,\dfrac{s}{d_0}\right),
$$
we get
$$
T'(t)\geq s-d_1T(t),\ t>0,\ T(0)=T_0>0.
$$
Hence
$$
T(t)\geq e^{-d_1 t} T_0+\int_0^t e^{-d_1 (t-a)} s da=e^{-d_1 t} T_0+\dfrac{s}{d_1}\left( 1-e^{-d_1 t}\right),\ \forall t\geq 0.
$$
{Then} the result follows by setting $T_-:=\min\left( T_0, \dfrac{s}{d_1}\right)$.
\end{proof}

As a consequence of  Lemma \ref{PROPy.12a} and the comparison principle, we have the following result:
\begin{theorem}\label{THy.13} Assume that $i_{\pm}(t,a)$ satisfies
\begin{equation}\label{y.10}
\left\lbrace
\begin{array}{l}
\partial_t i_{\pm}(t,a)+\partial_a i_{\pm}(t,a)=-\delta(a)i_{\pm}(t,a), \text{ for } a \geq 0,\ t\geq0,\\
i_{\pm}(t,0)=k T_{\pm}V_\pm (t), \\
V_\pm'(t)=\int_0^{+\infty} p(a)i_\pm (t,a)da-cV_\pm (t)\\
i_{\pm}(0,.)=i_0 \in \Li^1_+(\left(0,+\infty \right), \mathbb{R}),\ V_\pm(0)=V_0.
\end{array}
\right.
\end{equation}
Then
\begin{equation}
i_{-}(t,a)\leq i(t,a) \leq i_{+}(t,a),\ \mbox{for}\ t\geq0, \ \mbox{for a. e.}\ a\geq 0
\end{equation}
and
\begin{equation}
V_-(t)\leq V(t) \leq V_+(t),\ \forall t\geq 0.
\end{equation}
\end{theorem}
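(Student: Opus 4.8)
The plan is to recast the two comparison systems \eqref{y.10} as abstract Cauchy problems in the same space $X=\mathbb{R}\times\mathbb{R}\times\Li^1$ used in the integrated semigroup formulation above, and then to exhibit $u_+:=(V_+,(0_{\mathbb{R}},i_+))$ and $u_-:=(V_-,(0_{\mathbb{R}},i_-))$ as, respectively, an upper and a lower solution of the genuine Cauchy problem $u'(t)=Au(t)+F(T(t),u(t))$, $u(0)=u_0$. Writing the time-dependent nonlinearity $\widetilde F(t,u):=F(T(t),u)$, where $t\mapsto T(t)$ is the known bounded function from Lemma~\ref{PROPy.12a}, each $u_\pm$ solves the problem obtained by freezing $T$ at $T_\pm$. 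First I would check that $A$ and $\widetilde F$ meet Assumptions~\ref{ASS2.2}, \ref{ASS2.3} and \ref{ASS2.10} (standard for this age-structured operator, cf. Magal and Ruan \cite{Magal-Ruan2018}) and, crucially, Assumption~\ref{ASS3.1}.

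I expect the verification of Assumption~\ref{ASS3.1} to be the key structural point, but it turns out to be benign: since $p\geq 0$ and $T(t)\geq T_->0$, the map $(V,(0_{\mathbb{R}},i))\mapsto F(T,(V,(0_{\mathbb{R}},i)))=(\int_0^\infty p(a)i(a)\,da,(kTV,0))$ is already nonnegative and monotone increasing on the cone $X_+=\mathbb{R}_+\times\mathbb{R}_+\times\Li^1_+$, so Assumption~\ref{ASS3.1} holds with $B=0$ once one knows $A$ is resolvent positive (immediate for the mortality operator with absorbing boundary appearing here). Consequently Theorem~\ref{TH3.1} applies to all three problems, giving $u(t)\geq 0$, $u_\pm(t)\geq 0$, and in particular $V_\pm(t)\geq 0$ and $i_\pm(t,\cdot)\geq 0$ for all $t\geq 0$.

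The comparison itself rests on the sign of $\widetilde F(t,u_\pm)-F(T_\pm,u_\pm)$. Using $F(T_+,u_+)-F(T(t),u_+)=(0,(k(T_+-T(t))V_+(t),0))$ together with $T(t)\leq T_+$ and $V_+(t)\geq 0$ from the previous step gives $F(T(t),u_+)\leq F(T_+,u_+)$, which makes $u_+$ an upper solution of the $\widetilde F$-problem; symmetrically $T(t)\geq T_-$ and $V_-(t)\geq 0$ yield $F(T(t),u_-)\geq F(T_-,u_-)$, making $u_-$ a lower solution. Then I would invoke the integral form of the comparison principle, namely Proposition~\ref{pro4.1} for the lower solution and its upper-solution counterpart, to conclude $u_-(t)\leq u(t)\leq u_+(t)$ on the common interval of existence; reading off the coordinates gives exactly $i_-(t,a)\leq i(t,a)\leq i_+(t,a)$ a.e. and $V_-(t)\leq V(t)\leq V_+(t)$.

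Two points deserve care at the end. First, the systems \eqref{y.10} are linear renewal systems with frozen coefficient $T_\pm$, hence global, so $\hat\tau$ may be taken arbitrarily large; the upper-solution conclusion then forces $\tau(0,u_0)>\hat\tau$ for every $\hat\tau$, i.e. the genuine solution $u$ is global, which is what lets the inequalities hold for all $t\geq 0$. Second, if one prefers the differential forms (Propositions~\ref{PROP4.3} and \ref{PROP4.4}), which require $u_\pm\in C^1\cap C(\cdot,D(A))$, I would first argue for data $i_0\in\W^{1,1}$ giving classical solutions and pass to general $i_0\in\Li^1_+$ by density and continuity of the three semiflows, the same device already used in the SIR example. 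The only genuine obstacle I anticipate is bookkeeping the product order on $X$ correctly, separating the $V$-coordinate, the boundary $\mathbb{R}$-coordinate and the $\Li^1$-coordinate, so that the upper/lower solution property is checked coordinate-by-coordinate rather than in a single lumped inequality.
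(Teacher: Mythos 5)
Your proposal is correct and follows exactly the route the paper intends (the theorem is stated without a written proof, as a consequence of Lemma~\ref{PROPy.12a} and the comparison principle of Section~5): freeze $T(t)$ as a known function, note that $F(T,\cdot)$ is already nonnegative and monotone on the cone so that Assumption~\ref{ASS3.1} holds, use $T_-\leq T(t)\leq T_+$ and the positivity of $V_\pm$ to make $u_\pm$ lower/upper solutions, and apply Propositions~\ref{pro4.1}--\ref{PROP4.4} (with the usual density argument for classical solutions). Your write-up in fact supplies the details the paper leaves implicit.
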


The foregoing Theorem \ref{THy.13} will allow us to determine the invariant sub-regions that can serve to {study the uniform persistence of  system \eqref{y.6}.} Let
$$
\Gamma^{\pm}_I(a)=\dfrac{kT_\pm}{\lambda^\pm +c} \int_a^{+\infty} e^{-\int_a^\theta [\delta(l)+\lambda^\pm]} p(\theta) d\theta,\ \forall a\geq 0,
$$
where $\lambda^{\pm}$ is chosen to satisfy $\Gamma_I^{\pm}(0)=1$. That is to say that $\lambda^{\pm} \in \mathbb{R}$ satisfies
$$
\dfrac{kT_\pm}{\lambda^\pm +c} \int_0^{+\infty} e^{-\int_0^\theta [\delta(l)+\lambda^\pm]} p(\theta) d\theta=1.
$$
Next we define the bounded linear operators $\Gamma^\pm : \mathbb{R} \times \Li^1((0,+\infty),\mathbb{R}) \rightarrow  \mathbb{R}$ by
\begin{equation}\label{y.13}
\Gamma^\pm\left(\begin{array}{cc}
V\\
i
\end{array}\right)=\dfrac{kT_\pm}{\lambda^\pm +c}V+ \int_0^{+\infty} \Gamma_I^\pm(a) i(a) da.
\end{equation}
\begin{lemma} For each $t \geq 0$,
$$
\Gamma^\pm\left(\begin{array}{cc}
V_\pm(t)\\
i_\pm(t,\cdot)
\end{array}\right)=e^{\lambda^\pm t} \Gamma^\pm\left(\begin{array}{cc}
V_0\\
i_0
\end{array}\right).
$$
\end{lemma}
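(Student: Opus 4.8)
The plan is to adapt the differentiation-along-characteristics computation used for the SIR model (the argument leading to \eqref{6.10}); the one genuinely new feature is that the conserved functional must couple the age density $i_\pm$ with the scalar component $V_\pm$. First I would record the ordinary differential equation satisfied by the weight $a\mapsto\Gamma_I^\pm(a)$. Differentiating its integral definition with respect to the lower limit and under the integral sign gives
\[
(\Gamma_I^\pm)'(a)=[\delta(a)+\lambda^\pm]\,\Gamma_I^\pm(a)-\frac{kT_\pm}{\lambda^\pm+c}\,p(a),\qquad \Gamma_I^\pm(0)=1,
\]
where the normalisation $\Gamma_I^\pm(0)=1$ is exactly the defining relation for $\lambda^\pm$.

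Next I would set $W(t):=\Gamma^\pm\bigl(V_\pm(t),i_\pm(t,\cdot)\bigr)=\frac{kT_\pm}{\lambda^\pm+c}V_\pm(t)+\int_0^\infty\Gamma_I^\pm(a)i_\pm(t,a)\,da$ and compute $W'(t)$ along a classical solution of \eqref{y.10}. Using the transport equation $\partial_t i_\pm=-\partial_a i_\pm-\delta(a)i_\pm$ and integrating the $-\partial_a i_\pm$ term by parts, the flux at $a=\infty$ vanishes (since $i_\pm(t,\cdot)\in \Li^1_+$ decays thanks to $\delta(a)\ge\delta_0>0$) while the boundary term at $a=0$ equals $\Gamma_I^\pm(0)\,i_\pm(t,0)=i_\pm(t,0)=kT_\pm V_\pm(t)$. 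Substituting the ODE for $\Gamma_I^\pm$ makes the $\delta(a)\Gamma_I^\pm i_\pm$ contributions cancel, leaving
\[
\frac{d}{dt}\int_0^\infty\Gamma_I^\pm i_\pm\,da=kT_\pm V_\pm(t)+\lambda^\pm\int_0^\infty\Gamma_I^\pm i_\pm\,da-\frac{kT_\pm}{\lambda^\pm+c}\int_0^\infty p\,i_\pm\,da.
\]
Adding $\frac{kT_\pm}{\lambda^\pm+c}V_\pm'(t)=\frac{kT_\pm}{\lambda^\pm+c}\bigl(\int_0^\infty p\,i_\pm\,da-cV_\pm(t)\bigr)$ then cancels the two $\int_0^\infty p\,i_\pm\,da$ terms.

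The decisive step is the algebraic identity $kT_\pm-c\,\frac{kT_\pm}{\lambda^\pm+c}=\lambda^\pm\,\frac{kT_\pm}{\lambda^\pm+c}$, which holds precisely because of the coefficient $\frac{kT_\pm}{\lambda^\pm+c}$ built into the definition \eqref{y.13} of $\Gamma^\pm$; it collapses the surviving $V_\pm$-contribution together with the integral into $W'(t)=\lambda^\pm W(t)$, whence $W(t)=e^{\lambda^\pm t}W(0)$, which is the claim. Finally, since the computation presupposes $i_\pm(t,\cdot)\in\W^{1,1}$, I would first carry it out on the dense set of initial data yielding classical solutions and then pass to arbitrary $i_0\in \Li^1_+$ by density together with the continuity of the semiflow, exactly as in the SIR case. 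I expect the only real obstacle to be bookkeeping: justifying the integration by parts through the vanishing flux at infinity and tracking the two scalar cancellations; the coupling through $V_\pm$ is what forces the weight to carry the extra $\frac{kT_\pm}{\lambda^\pm+c}V$ term rather than being a pure age-integral as in \eqref{6.10}.
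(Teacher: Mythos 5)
Your proposal is correct and follows essentially the same route as the paper: derive the ODE for $\Gamma_I^\pm$, differentiate the functional along classical solutions of \eqref{y.10}, integrate by parts, use the boundary condition $i_\pm(t,0)=kT_\pm V_\pm(t)$ together with the normalisation $\Gamma_I^\pm(0)=1$ to obtain $\frac{d}{dt}\Gamma^\pm=\lambda^\pm\Gamma^\pm$, and conclude by density of classical solutions and boundedness of $\Gamma^\pm$. Your explicit statement of the cancellation identity $kT_\pm-c\,\frac{kT_\pm}{\lambda^\pm+c}=\lambda^\pm\frac{kT_\pm}{\lambda^\pm+c}$ and of the vanishing flux at $a=\infty$ only makes explicit what the paper leaves implicit.
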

\begin{proof}
The function $ a \to \Gamma_I^{\pm}(a)$ satisfies
\begin{equation*}
\left\lbrace
\begin{array}{ll}
(\Gamma_I^{\pm})'(a)=[\delta(a)+\lambda^{\pm}]\Gamma_I^{\pm}(a)-\dfrac{k T_{\pm}}{\lambda^\pm+c} p(a),\ \mbox{for a.e.}\ a\geq 0,\\
\Gamma_I^{\pm}(0)=1.
\end{array}
\right.
\end{equation*}
Hence by using classical solution of \eqref{y.10}, { for each $t>0$, we have}
$$
\begin{array}{lllll}
 \displaystyle \frac{d}{d t}\Gamma^\pm\left(\begin{array}{cc}
V_\pm(t)\\
i_\pm(t,\cdot)
\end{array}\right)&=& \displaystyle \dfrac{kT_\pm}{\lambda^\pm+c}\left[\int_0^{+\infty} p(a) i_\pm(t,a)da-cV_\pm(t)\right]\\
& &  \displaystyle +\int_0^{+\infty} \Gamma^\pm_I(a)\left[-\partial_ai(t,a)-\delta(a) i(t,a)\right]da\\
&=& \displaystyle \dfrac{kT_\pm}{\lambda^\pm+c}\left[\int_0^{+\infty} p(a) i_\pm(t,a)da-cV_\pm(t)\right]\\
&  &  \displaystyle +\Gamma^\pm_I(0)i(t,0) +\int_0^{+\infty} (\Gamma^\pm_I)'(a)i(t,a)da\\
& & \displaystyle  -\int_0^{+\infty} \Gamma^\pm_I(a) \delta(a) i(t,a)da\\
&=& \displaystyle -\dfrac{kT_\pm}{\lambda^\pm+c}cV_\pm(t)\\
&  &  \displaystyle +\Gamma^\pm_I(0)i(t,0) +\lambda^\pm \int_0^{+\infty} \Gamma^\pm_I(a)i(t,a)da.
\end{array}
$$
Recalling that $\Gamma^\pm_I(0)=1$ and $i_\pm(t,0)=kT_\pm V_\pm(t)$, we obtain that for each $t\geq 0$
$$
\frac{d}{d t}\Gamma^\pm\left(\begin{array}{cc}
V_\pm(t)\\
i_\pm(t,\cdot)
\end{array}\right)=\lambda^\pm \dfrac{kT_\pm}{\lambda^\pm+c} V(t)+\lambda^\pm \int_0^{+\infty} \Gamma^\pm_I(a)i(t,a)da=\lambda^\pm \Gamma^\pm\left(\begin{array}{cc}
V_\pm(t)\\
i_\pm(t,\cdot)
\end{array}\right)
$$
which implies that
$$
\Gamma^\pm\left(\begin{array}{cc}
V_\pm(t)\\
i_\pm(t,\cdot)
\end{array}\right)=e^{\lambda^\pm t}\Gamma^\pm \left(\begin{array}{cc}
V_\pm(0)\\
i_\pm(0,\cdot)
\end{array}\right),\ \forall t\geq 0.
$$
The result follows by using the density of the initial conditions giving classical solution combined together with the fact that $\Gamma^\pm$ is a bounded linear operator.
\end{proof}

In order to deal with the persistence property of system \eqref{y.6}  we need to assume {that $p \not\equiv 0$ and} let
$$
a^\star:=\sup \left\{a>0: \int_a^{\infty}p(\sigma) e^{-\sigma} d\sigma >0  \right\} \in (0, \infty].
$$
Define the interior sub-domain
$$
\widehat{M}_0=\left \{\left(\begin{array}{c}
V\\
i
\end{array}\right) \in \mathbb{R}_+\times \Li^1_+((0,+\infty),\mathbb{R}): V+\int_0^{a^\star} i(a)da>0 \right\},
$$
and the boundary sub-domain
$$
\partial \widehat{M}_0=\left \{\left(\begin{array}{c}
V\\
i
\end{array}\right) \in \mathbb{R}\times \Li^1_+((0,+\infty),\mathbb{R}): V+\int_0^{a^\star} i(a)da=0 \right\}.
$$
\begin{theorem} The domains $\left[ 0,\infty \right) \times \widehat{M}_0 $ and  $\left[ 0,\infty \right) \times  \partial \widehat{M}_0 $  are positively invariant by the semiflow generated by \eqref{y.6}. {That is to say} that
$$
V_0+\int_0^{a^\star} i_0(a)da>0 \Rightarrow V(t)+\int_0^{a^\star} i(t,a)da>0, \forall t \geq 0
$$
and
$$
V_0+\int_0^{a^\star} i_0(a)da=0 \Rightarrow V(t)+ \int_0^{a^\star} i(t,a)da=0, \forall t \geq 0.
$$
Moreover if $V_0+\int_0^{a^\star} i_0(a)da=0$ then
$$
V(t)=0, \forall t \geq 0
$$
and the {solution of the} $i$-equation is explicitly given by
\begin{equation}
i(t,a)=
\left\lbrace
\begin{array}{l}
e^{-\int_{a-t}^{a} \delta(l)di} i_0(a-t), \text{{\rm if }} a-t \geq 0,\\
0, \text{{\rm if }} t-a \geq 0.
\end{array}
\right.
\end{equation}
Therefore
$$
\lim_{t \to \infty} \Vert i(t,.) \Vert_{L^1}=0.
$$
\end{theorem}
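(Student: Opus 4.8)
The plan is to reproduce the proof of Theorem \ref{Th6.23}, replacing the scalar weight $\Gamma_I^+$ by the linear functional $\Gamma^\pm$ of \eqref{y.13}, which now couples the viral load $V$ with the infection-age density $i$. The argument rests on two preliminary properties of $\Gamma^\pm$ that I would establish first. Since $\Gamma_I^\pm(0)=1$ and $p\not\equiv 0$, the normalising constant $\frac{kT_\pm}{\lambda^\pm+c}$ is strictly positive; and the first-order ODE for $\Gamma_I^\pm$ recalled in the preceding Lemma, once integrated, gives $\Gamma_I^\pm(a)=\frac{kT_\pm}{\lambda^\pm+c}\int_a^\infty e^{-\int_a^\theta[\delta(l)+\lambda^\pm]\,dl}p(\theta)\,d\theta$, which is strictly positive on $[0,a^\star)$ and vanishes on $[a^\star,\infty)$ directly from the definition of $a^\star$ (for $a<a^\star$ the tail of $p$ carries positive mass, whereas $p=0$ a.e. beyond $a^\star$). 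Hence for every $(V,i)\in\mathbb{R}_+\times \Li^1_+$ one has $\Gamma^\pm\binom{V}{i}=\frac{kT_\pm}{\lambda^\pm+c}V+\int_0^{a^\star}\Gamma_I^\pm(a)i(a)\,da\ge 0$, with equality if and only if $V=0$ and $\int_0^{a^\star}i(a)\,da=0$. Thus $\Gamma^\pm$ is a strictly positive (monotone) functional that vanishes on the cone exactly on $\partial\widehat{M}_0$ and is strictly positive exactly on $\widehat{M}_0$.

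With this in hand I would argue as in Theorem \ref{Th6.23}. Assume first $T_0>0$, so Lemma \ref{PROPy.12a} gives $0<T_-\le T(t)\le T_+$ and the comparison system \eqref{y.10} of Theorem \ref{THy.13} is available. If $(V_0,i_0)\in\partial\widehat{M}_0$ then $\Gamma^+\binom{V_0}{i_0}=0$, and combining the upper comparison $0\le i(t,\cdot)\le i_+(t,\cdot)$, $0\le V(t)\le V_+(t)$ with the monotonicity of $\Gamma^+$ and the eigenrelation of the preceding Lemma yields
$$0\le \Gamma^+\binom{V(t)}{i(t,\cdot)}\le \Gamma^+\binom{V_+(t)}{i_+(t,\cdot)}=e^{\lambda^+t}\,\Gamma^+\binom{V_0}{i_0}=0,$$
so $(V(t),i(t,\cdot))\in\partial\widehat{M}_0$ for all $t\ge0$. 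Symmetrically, if $(V_0,i_0)\in\widehat{M}_0$ then $\Gamma^-\binom{V_0}{i_0}>0$ and the lower comparison gives $\Gamma^-\binom{V(t)}{i(t,\cdot)}\ge e^{\lambda^-t}\Gamma^-\binom{V_0}{i_0}>0$, i.e. $(V(t),i(t,\cdot))\in\widehat{M}_0$. When $T_0=0$ the interior case is handled by restarting at a small $t^\star>0$, where $T(t^\star)>0$ (as $T'(0)=s>0$) and the open condition defining $\widehat{M}_0$ survives by continuity; the boundary case needs no restart, since the upper bound $T(t)\le T_+$ coming from \eqref{y.8} holds for every $T_0\ge0$, so the upper comparison and the chain above go through verbatim.

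For the last assertion, take $(V_0,i_0)\in\partial\widehat{M}_0$. The boundary invariance just proved gives $V(t)+\int_0^{a^\star}i(t,a)\,da=0$ with $V(t),i(t,\cdot)\ge0$, forcing $V(t)=0$ for all $t\ge0$. The boundary condition then reads $i(t,0)=kT(t)V(t)=0$, so the $i$-equation is a pure transport equation with no inflow; integrating along characteristics (the Volterra formulation \eqref{y.7} with $B\equiv0$) produces the stated explicit formula. Finally, setting $\sigma=a-t$,
$$\|i(t,\cdot)\|_{\Li^1}=\int_t^\infty e^{-\int_{a-t}^a\delta(l)\,dl}i_0(a-t)\,da=\int_0^\infty e^{-\int_\sigma^{\sigma+t}\delta(l)\,dl}i_0(\sigma)\,d\sigma\le e^{-\delta_0 t}\,\|i_0\|_{\Li^1},$$
which tends to $0$ as $t\to\infty$ since $\delta(\cdot)\ge\delta_0$ by Assumption \ref{ASSy.11}(iii).

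The main obstacle is the first paragraph: verifying that $\Gamma^\pm$ is a genuinely positive functional vanishing on the cone precisely on $\partial\widehat{M}_0$. This requires the strict positivity of $\frac{kT_\pm}{\lambda^\pm+c}$ together with the sharp support identity $\{\Gamma_I^\pm>0\}=[0,a^\star)$, both of which come from carefully unwinding the definition of $a^\star$; once this characterisation is secured, the invariance statements and the $\Li^1$-decay follow routinely from the comparison principle and the eigenrelation.
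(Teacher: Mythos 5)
Your proposal follows essentially the same route as the paper: apply the monotone functional $\Gamma^\pm$ to the comparison bounds of Theorem \ref{THy.13}, use the eigenrelation $\Gamma^\pm(V_\pm(t),i_\pm(t,\cdot))=e^{\lambda^\pm t}\Gamma^\pm(V_0,i_0)$ to propagate vanishing (resp.\ strict positivity) of $\Gamma^+$ (resp.\ $\Gamma^-$), and restart at a small $t^\star>0$ when $T_0=0$. Your added details --- the support identity $\{\Gamma_I^\pm>0\}=[0,a^\star)$, the observation that the boundary case only needs the upper bound $T(t)\le T_+$ valid for all $T_0\ge0$, and the explicit $e^{-\delta_0 t}$ decay estimate --- are correct and merely make explicit what the paper leaves implicit.
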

\begin{proof} We first prove that $[0,+\infty)\times \partial \widehat{M}_0$ is positively invariant.  Let  $\left(\begin{array}{c}
V_0\\
i_0
\end{array}\right) \in \partial \widehat{M}_0$. Then we have
$$
V_0=0 \ \text{ and } i_0(a)=0,\  \mbox{for a.e.} \ a\in (0,a^\star)
$$
so that
$$
\Gamma^{+}\left(\begin{array}{c}
V_0\\
i_0
\end{array}\right)=0.
$$
By using  Theorem \ref{THy.13} and the definition of $\Gamma^\pm$ in \eqref{y.13} we have
$$
0\leq \Gamma^{+}\left(\begin{array}{c}
V(t)\\
i(t,.)
\end{array}\right)\leq \Gamma^{+}\left(\begin{array}{c}
V_+(t)\\
i_+(t,.)
\end{array}\right)=e^{\lambda^+ t} \Gamma^{+}\left(\begin{array}{c}
V_0\\
i_0
\end{array}\right)=0.
$$
{Then} we deduce, for each $t\geq 0$, that
$$
\dfrac{kT_+}{\lambda^+ +c}V(t)+ \int_0^{+\infty} \Gamma_I^+(a) i(t,a) da=0.
$$
Hence
$$
\left(\begin{array}{c}
V(t)\\
i(t,.)
\end{array}\right)\in \partial \widehat{M}_0,\ \forall t\geq 0.
$$
Next we prove that $[0,+\infty)\times \widehat{M}_0$ is positively invariant.  Let  $\left(\begin{array}{c}
V_0\\
i_0
\end{array}\right) \in \widehat{M}_0$ be given.
If  $T_0>0$ then we have
$$
\Gamma^{-}\left(\begin{array}{c}
V_0\\
i_0
\end{array}\right)>0.
$$
Combining  Theorem \ref{THy.13} and the definition of $\Gamma^\pm$ in \eqref{y.13}  we obtain
$$
0< e^{\lambda^- t} \Gamma^{-}\left(\begin{array}{c}
V_0\\
i_0
\end{array}\right)=\Gamma^{-}\left(\begin{array}{c}
V_-(t)\\
i_-(t,.)
\end{array}\right)\leq  \Gamma^{-}\left(\begin{array}{c}
V(t)\\
i(t,.)
\end{array}\right),\ \forall t\geq 0.
$$
Hence
$$
0<\dfrac{kT_-}{\lambda^- +c}V(t)+ \int_0^{+\infty} \Gamma_I^-(a) i(t,a) da,\ t\geq 0,
$$
and we deduce that
$$
\left(\begin{array}{c}
V(t)\\
i(t,.)
\end{array}\right)\in \widehat{M}_0,\ \forall t\geq 0.
$$
If $T_0=0$  we can replace the initial time by any $t^\star>0$ small enough and we will have $T(t^\star)>0$ and $\left(\begin{array}{c}
V(t^\star)\\
i(t^\star,.)
\end{array}\right)\in \widehat{M}_0$.
\end{proof}
\subsection{Age-Structured Population Dynamics Models}
Let $p \in [1, +\infty)$ and $q \in (1, +\infty]$ with $1/p+1/q=1$.
In this subsection we consider the following class of age structured model
\begin{equation} \label{6.2}
\left\lbrace
\begin{array}{l}
\partial_t u(t,a)+\partial_a u(t,a)=\mu(\G(t),a)u(t,a), \text{ for } a \geq 0\\
u(t,0)=\int_0^{+\infty} \beta(\Sigma(t),a)u(t,a)da \\
u(0,.)=u_0 \in \Li^p_+(\left(0,+\infty \right), \mathbb{R}^n)
\end{array}
\right.
\end{equation}
with
$$
\G(t):=\int_0^{+\infty}  \alpha(a)u(t,a)da \in \mathbb{R}^n
$$
and
$$
\Sigma(t):=\int_0^{+\infty}  \sigma(a)u(t,a)da \in \mathbb{R}^n.
$$

\noindent \textbf{Abstract Cauchy problem reformulation: }
Set
$$
 { X=\mathbb{R}^n \times \Li^p(\left(0,+\infty \right),\mathbb{R}^n)}
$$
endowed with the usual product norm.

Consider the linear operator $A:D(A)\subset X \to X$
$$
A
\left(
\begin{array}{c}
0_{\mathbb{R}^n}\\
\varphi
\end{array}
\right)
=
\left(
\begin{array}{c}
-\varphi(0)\\
-\varphi^\prime
\end{array}
\right)
$$
and
$$
D(A)=\left\{ 0_{\mathbb{R}^n} \right\} \times \W^{1,p}(\left(0, +\infty \right), \mathbb{R}^n).
$$
Then the closure of the domain of $A$ is
$$
X_0:=\overline{D(A)}=\left\{ 0_{\mathbb{R}^n} \right\} \times \Li^{p}(\left(0, +\infty \right), \mathbb{R}^n).
$$
Consider $F:X_0 \to X$ given by
$$
F
\left(
\begin{array}{c}
0_{\mathbb{R}^n}\\
\varphi
\end{array}
\right)
=
\left(
\begin{array}{c}
\C(\varphi)\\
\D(\varphi)
\end{array}
\right)
$$
 where we have set
$$
\C(\varphi):=\int_0^{+\infty} \beta(\int_0^{+\infty}  \sigma(r)\varphi(r)dr,a)\varphi(a)da,\ \varphi\in \Li^{p}(\left(0, +\infty \right), \mathbb{R}^n)
$$
and
$$
\D(\varphi)(a)=\mu(\int_0^{+\infty}  \alpha(r)\varphi(r)dr,a)\varphi(a),\ \varphi\in \Li^{p}(\left(0, +\infty \right), \mathbb{R}^n).
$$
By identifying $u(t,.)$ with $v(t):=
\left(
\begin{array}{c}
0_{\mathbb{R}^n}\\
u(t,.)
\end{array}
\right)$ we can rewrite the partial differential equation \eqref{6.2} as the following abstract Cauchy problem
\begin{equation*}
v^\prime(t)=Av(t)+F(v(t)), \text{ for } t \geq 0, v(0)=\left(
\begin{array}{c}
0_{\mathbb{R}^n}\\
u_0
\end{array}
\right) \in X_0.
\end{equation*}
Age structured problem in $\Li^p$ have been studied specifically in Magal and Ruan \cite{Magal-Ruan07} and Assumptions \ref{ASS2.2} and \ref{ASS2.3} are satisfied for this specific class of examples.

{Set
$$
X_+=\mathbb{R}_+^n \times \Li_+^p(\left(0,+\infty \right),
$$
and we define} $B:X_0 \to X$ by
$$
B
\left(
\begin{array}{c}
0_{\mathbb{R}^n}\\
\varphi
\end{array}
\right)
=
\left(
\begin{array}{c}
0_{\mathbb{R}^n}\\
\varphi
\end{array}
\right).
$$
The next results show that the linear operator $A-\gamma B$ is resolvent positive in the sense that
$$
\left(\lambda I-(A- \gamma B) \right)^{-1} X_+ \subset X_+.
$$
That is to say Assumption~\ref{ASS3.0}-(i) and Assumption~\ref{ASS3.1}-(i) {are} satisfied.
\begin{lemma} \label{LE6.11}
For each $\lambda >-\gamma$, $\lambda$ belongs to the resolvent set of $\rho(A-\gamma B)$, and we have the following explicit formula for the resolvent of $A-\gamma B$:
$$
\left(\lambda I-(A- \gamma B) \right)^{-1}
\left(
\begin{array}{c}
\alpha\\
\psi
\end{array}
\right)
=
\left(
\begin{array}{c}
0_{\mathbb{R}^n}\\
\varphi
\end{array}
\right),
$$
where
$$\varphi(a)=e^{-(\lambda +\gamma)a}\alpha+\int_0^a e^{-(\lambda +\gamma)(a-l)}\psi(l)dl,\ \text{ for almost every } a\in [0,+\infty).$$
\end{lemma}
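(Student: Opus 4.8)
The plan is to solve the resolvent equation explicitly. Fix $\lambda>-\gamma$ and $\begin{pmatrix}\alpha\\ \psi\end{pmatrix}\in X$, and look for $\begin{pmatrix}0_{\mathbb{R}^n}\\ \varphi\end{pmatrix}\in D(A)$, i.e.\ $\varphi\in\W^{1,p}((0,+\infty),\mathbb{R}^n)$, solving $(\lambda I-(A-\gamma B))\begin{pmatrix}0_{\mathbb{R}^n}\\ \varphi\end{pmatrix}=\begin{pmatrix}\alpha\\ \psi\end{pmatrix}$. Using the definitions of $A$ and $B$ one has $(A-\gamma B)\begin{pmatrix}0_{\mathbb{R}^n}\\ \varphi\end{pmatrix}=\begin{pmatrix}-\varphi(0)\\ -\varphi'-\gamma\varphi\end{pmatrix}$, so the resolvent equation is equivalent to the boundary value problem
$$\varphi(0)=\alpha,\qquad \varphi'(a)+(\lambda+\gamma)\varphi(a)=\psi(a)\ \text{ for a.e. }a\geq 0.$$

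First I would solve this linear first-order ODE by the integrating-factor method: multiplying by $e^{(\lambda+\gamma)a}$ turns the left-hand side into $\frac{d}{da}\big(e^{(\lambda+\gamma)a}\varphi(a)\big)$, and integrating from $0$ to $a$ together with $\varphi(0)=\alpha$ produces exactly the asserted formula
$$\varphi(a)=e^{-(\lambda+\gamma)a}\alpha+\int_0^a e^{-(\lambda+\gamma)(a-l)}\psi(l)\,dl.$$
Since an absolutely continuous solution of the ODE with prescribed value at $a=0$ is unique, this simultaneously shows that $\lambda I-(A-\gamma B)$ is injective on $D(A)$ and that any preimage must have this form.

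The main obstacle — and the only point where the hypothesis $\lambda>-\gamma$ is used — is to verify that the $\varphi$ produced by the formula genuinely lies in $\W^{1,p}$, so that $\begin{pmatrix}0_{\mathbb{R}^n}\\ \varphi\end{pmatrix}\in D(A)$ and the formula defines a bounded operator on $X$. For the $\Li^p$ estimate I would split $\varphi$ into its two terms: the homogeneous part $e^{-(\lambda+\gamma)a}\alpha$ lies in $\Li^p$ because $\lambda+\gamma>0$ forces exponential decay, while the inhomogeneous part is the convolution $k\ast\psi$ of $\psi$ with the kernel $k(a):=e^{-(\lambda+\gamma)a}\mathbbm{1}_{\{a\geq 0\}}$; since $k\in\Li^1$ with $\|k\|_{\Li^1}=(\lambda+\gamma)^{-1}$, Young's convolution inequality gives $\|k\ast\psi\|_{\Li^p}\leq(\lambda+\gamma)^{-1}\|\psi\|_{\Li^p}$. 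Hence $\varphi\in\Li^p$, and then the ODE itself gives $\varphi'=\psi-(\lambda+\gamma)\varphi\in\Li^p$, so $\varphi\in\W^{1,p}$. Collecting the two estimates yields a bound of the form $\|\varphi\|_{\Li^p}\leq C(|\alpha|+\|\psi\|_{\Li^p})$, which shows the inverse is bounded; therefore $\lambda\in\rho(A-\gamma B)$ and the displayed expression is its resolvent. As a bonus, the nonnegativity of $e^{-(\lambda+\gamma)a}$ and of the convolution kernel makes positivity of the resolvent immediate, which is the property invoked in the surrounding discussion.
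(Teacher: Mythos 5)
Your proof is correct: the resolvent equation reduces exactly to the boundary value problem $\varphi(0)=\alpha$, $\varphi'+(\lambda+\gamma)\varphi=\psi$, the integrating-factor formula solves it, uniqueness of the absolutely continuous solution gives injectivity, and Young's inequality with the kernel $e^{-(\lambda+\gamma)a}\mathds{1}_{\{a\geq 0\}}\in \Li^1$ (which is where $\lambda>-\gamma$ is genuinely needed) gives $\varphi\in \W^{1,p}$ and boundedness of the inverse. The paper states this lemma without proof, and your argument is the standard computation that is evidently intended, including the observation that positivity of the resolvent follows from the nonnegativity of the kernel.
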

\begin{assumption}[{Positivity}]\label{ASS6.9.0} We assume that the following {conditions} are satisfied
\begin{itemize}
\item[{\rm(i)}]
$$
\alpha, \sigma \in \Li^{q}_+(\left(0,+\infty \right), \M_n(\mathbb{R}) ),
$$
$$
\mu \in \C^1 \left(  \mathbb{R}^n, \Li^\infty(\left(0,+\infty \right), \M_n(\mathbb{R}) {)} \right),
$$
$$
\beta \in \C^1 \left(  \mathbb{R}^n, \Li^q(\left(0,+\infty \right), \M_n(\mathbb{R}){)} \right);
$$

\item[{\rm(ii)}]the birth function $\C:\Li^p(\left(0,+\infty \right), \mathbb{R}^n) \to \mathbb{R}^n$
$$
\C({\varphi}):=\int_0^{+\infty} \beta(\int_0^{+\infty}  \sigma(r){\varphi}(r)dr,a){\varphi}(a)da
$$
is non-negative (i.e. $\C(\Li^p_+) \subset \mathbb{R}^n_+$).
\item[{\rm(iii)}] Consider $\D: \Li^p(\left(0,+\infty \right), \mathbb{R}^n) \to \Li^p(\left(0,+\infty \right), \mathbb{R}^n)$ the map defined by
$$
\D({\varphi})(a)=\mu(\int_0^{+\infty}  \alpha(r){\varphi}(r)dr,a){\varphi}(a).
$$
We assume that for each $\rm{M}>0$ there exists $\gamma=\gamma(\rm{M})>0$ such that the map ${\varphi} \to \gamma {\varphi}+\D({\varphi})$ is non-negative on $ \left\{{\varphi} \in \Li^p_+ : \Vert {\varphi} \Vert_{L^p} \leq M \right\} $.
\end{itemize}
\end{assumption}
The first part of Assumption \ref{ASS3.0} is now clearly satisfied and the property (ii) of Assumption \ref{ASS3.0} {can be readily} checked. {In fact, by using Assumptpion \ref{ASS6.9.0}, we} deduce that for $M>0$ we can find $\gamma>0$ such that
$$ (F+\gamma B)
\left(
\begin{array}{c}
0_{\mathbb{R}^n}\\
\varphi
\end{array}
\right)
\geq 0
$$
whenever $\Vert \varphi \Vert_{L^p} \leq M$.

Therefore, by Theorem \ref{TH3.1}, we obtain the positivity of the semiflow.

\begin{theorem}[Positive Semiflow] ~\\  \label{TH6.11}
Let Assumption \ref{ASS6.9.0} be satisfied. Then the semiflow generated by the age structured problem \eqref{6.2} is non negative on $\Li^p_+(\left(0,+\infty \right), \mathbb{R}^n)$. That is to say if $u_\varphi(t,a)$ is the solution of system \eqref{6.2} with initial value $\varphi$ then
$$
 \varphi \geq 0 \Rightarrow u_\varphi(t,a) \geq 0 , \forall t \in \left[0, \tau_\varphi \right),
$$
where $\tau_\varphi$ is the maximal time of existence of the solution $u_\varphi(t,a)$.

\end{theorem}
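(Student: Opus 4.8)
The plan is to verify, for the abstract reformulation $v'(t)=Av(t)+F(v(t))$ of \eqref{6.2}, each hypothesis of the abstract positivity result Theorem \ref{TH3.1} --- namely Assumptions \ref{ASS2.2}, \ref{ASS2.3}, \ref{ASS2.10} and \ref{ASS3.0} --- and then to invoke that theorem directly. Two of these are already in hand: as recalled in the text, age structured problems in $\Li^p$ fall within the framework of Magal and Ruan \cite{Magal-Ruan07}, so Assumptions \ref{ASS2.2} and \ref{ASS2.3} hold for the operator $A$ defined above. It therefore remains to check the Lipschitz condition on $F$ (Assumption \ref{ASS2.10}) and the two positivity conditions (Assumption \ref{ASS3.0}).

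For Assumption \ref{ASS2.10} I would first observe that, since $\alpha,\sigma\in\Li^q_+$, the mean--field functionals $\varphi\mapsto\int_0^{+\infty}\sigma(r)\varphi(r)\,dr$ and $\varphi\mapsto\int_0^{+\infty}\alpha(r)\varphi(r)\,dr$ are bounded linear maps from $\Li^p(\left(0,+\infty\right),\mathbb{R}^n)$ into $\mathbb{R}^n$ by Hölder's inequality. Combining this with the $\C^1$ regularity of $\beta$ and $\mu$ assumed in \ref{ASS6.9.0}(i), a direct estimate shows that the birth map $\C$ and the death map $\D$ are Lipschitz on bounded subsets of $\Li^p$, and hence so is $F$. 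This verification --- carrying the nonlinear dependence of $\beta$ and $\mu$ through $\Sigma$ and $\G$ and controlling it in the $\Li^p$ norm --- is where the analytic work lies, and I expect it to be the main technical step.

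The positivity conditions then follow cleanly. Assumption \ref{ASS3.0}(i) is immediate from Lemma \ref{LE6.11}: for $\lambda>-\gamma$ the explicit resolvent formula writes $\varphi(a)$ as a superposition, against the strictly positive kernels $e^{-(\lambda+\gamma)a}$ and $e^{-(\lambda+\gamma)(a-l)}$, of the non-negative data $\alpha$ and $\psi$, so $\left(\lambda I-(A-\gamma B)\right)^{-1}$ maps $X_+$ into $X_+$ for all $\lambda$ large enough. For Assumption \ref{ASS3.0}(ii), given $\xi>0$ I would put $M=\xi$ and take the corresponding $\gamma=\gamma(M)$ from \ref{ASS6.9.0}(iii); then for $\varphi\in\Li^p_+$ with $\Vert\varphi\Vert_{\Li^p}\le\xi$ one computes
$$
(F+\gamma B)\left(\begin{array}{c}0_{\mathbb{R}^n}\\ \varphi\end{array}\right)=\left(\begin{array}{c}\C(\varphi)\\ \D(\varphi)+\gamma\varphi\end{array}\right),
$$
whose first component is non-negative by \ref{ASS6.9.0}(ii) and whose second is non-negative by the choice of $\gamma$ in \ref{ASS6.9.0}(iii).

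Finally, with Assumptions \ref{ASS2.2}, \ref{ASS2.3}, \ref{ASS2.10} and \ref{ASS3.0} all verified, Theorem \ref{TH3.1} applies and yields that the maximal semiflow generated by the abstract Cauchy problem is non-negative. Translating back through the identification of $u(t,\cdot)$ with $v(t)$ gives $u_\varphi(t,a)\ge 0$ for all $t\in\left[0,\tau_\varphi\right)$ whenever $\varphi\ge 0$, which is the assertion.
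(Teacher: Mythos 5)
Your proposal is correct and follows essentially the same route as the paper: verify Assumptions \ref{ASS2.2} and \ref{ASS2.3} by appealing to the $\Li^p$ age-structured framework of Magal and Ruan, obtain Assumption \ref{ASS3.0}(i) from the explicit resolvent formula of Lemma \ref{LE6.11}, deduce Assumption \ref{ASS3.0}(ii) from Assumption \ref{ASS6.9.0}(ii)--(iii), and conclude by Theorem \ref{TH3.1}. The only difference is that you spell out the verification of the local Lipschitz condition (Assumption \ref{ASS2.10}) via H\"older's inequality and the $\C^1$ regularity of $\beta$ and $\mu$, which the paper leaves implicit.
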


\begin{assumption}[{Monotonicity}]\label{ASS6.9} We assume that the following {conditions} are satisfied
\begin{itemize}
\item[{\rm(i)}]
$$
\alpha, \sigma \in \Li^{q}_+(\left(0,+\infty \right), \M_n(\mathbb{R}) ),
$$
$$
\mu \in \C^1 \left(  \mathbb{R}^n, \Li^\infty(\left(0,+\infty \right), \M_n(\mathbb{R}) {)}\right),
$$
$$
\beta \in \C^1 \left(  \mathbb{R}^n, \Li^q(\left(0,+\infty \right), \M_n(\mathbb{R}) {)}\right);
$$

\item[{\rm(ii)}]the birth function $\C:\Li^p(\left(0,+\infty \right), \mathbb{R}^n) \to \mathbb{R}^n$
$$
\C({\varphi}):=\int_0^{+\infty} \beta(\int_0^{+\infty}  \sigma(r){\varphi}(r)dr,a){\varphi}(a)da
$$
is non-negative (i.e. $\C(\Li^p_+) \subset \mathbb{R}^n_+$) and is a monotone increasing function on $\Li^p_+$.
\item[{\rm(iii)}] Consider $\D: \Li^p(\left(0,+\infty \right), \mathbb{R}^n) \to \Li^p(\left(0,+\infty \right), \mathbb{R}^n)$ the map defined by
$$
\D({\varphi})(a)=\mu(\int_0^{+\infty}  \alpha(r){\varphi}(r)dr,a){\varphi}(a).
$$
We assume that for each $\rm{M}>0$ there exists $\gamma=\gamma(\rm{M})>0$ such that the map ${\varphi} \to \gamma {\varphi}+\D({\varphi})$ is non-negative and monotone increasing on $ \left\{{\varphi} \in \Li^p_+ : \Vert {\varphi} \Vert_{L^p} \leq M \right\} $.
\end{itemize}
\end{assumption}
The first part of Assumption \ref{ASS3.1} is now clearly satisfied by Lemma \ref{LE6.11} and the property (ii) of Assumption \ref{ASS3.1} {can be readily} checked. Indeed, by using Assumptpion \ref{ASS6.9} we deduce that for $M>0$ we can find $\gamma>0$ such that
$$\varphi \geq \psi  \Rightarrow (F+\gamma B)
\left(
\begin{array}{c}
0_{\mathbb{R}^n}\\
\varphi
\end{array}
\right)
\geq
(F+\gamma B)
\left(
\begin{array}{c}
0_{\mathbb{R}^n}\\
\psi
\end{array}
\right)
\geq 0
$$
whenever $\Vert \varphi \Vert_{L^p} \leq M$  and $\Vert \psi \Vert_{L^p} \leq M$.

By applying Theorem \ref{TH3.4} we obtain the following result.
\begin{theorem}[Monotone Semiflow] ~\\  \label{TH6.15}
Let Assumption \ref{ASS6.9} be satisfied. Then the semiflow generated by the age structured problem \eqref{6.2} is non negative and monotone increasing on $\Li^p_+(\left(0,+\infty \right), \mathbb{R}^n)$. That is to say if $u_\varphi(t,a)$ (respectively $u_\phi(t,a)$) is the solution of system \eqref{6.2} with initial value $\varphi$ (respectively $\phi$) then
$$
0 \leq \varphi \leq \phi \Rightarrow 0 \leq u_\varphi(t,a) \leq u_\phi(t,a), \forall t \in \left[0, \min(\tau_\varphi,\tau_\phi) \right),
$$
where $\tau_\varphi$ (respectively $\tau_\phi$) is the maximal time of existence of the solution $u_\varphi(t,a)$ (respectively $u_\phi(t,a)$).
\end{theorem}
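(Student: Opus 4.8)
The plan is to obtain the result as a direct application of the abstract monotone semiflow result Theorem~\ref{TH3.4} to the reformulated Cauchy problem $v'(t)=Av(t)+F(v(t))$ on $X_0$. It therefore suffices to check that the four hypotheses of Theorem~\ref{TH3.4} hold for the present data $(A,F,B,X_+)$. Three of them are already in place from the discussion preceding the statement: Assumptions~\ref{ASS2.2} and~\ref{ASS2.3} hold for the $\Li^p$ age-structured operator $A$ by the cited results of Magal and Ruan; Assumption~\ref{ASS3.1}(i), the resolvent positivity of $A-\gamma B$, is exactly the content of Lemma~\ref{LE6.11}, whose explicit kernel $e^{-(\lambda+\gamma)(a-l)}$ is non-negative so that $(\lambda I-(A-\gamma B))^{-1}$ maps $X_+$ into $X_+$ for every $\lambda>-\gamma$; and Assumption~\ref{ASS3.1}(ii) is precisely the monotonicity encoded in Assumption~\ref{ASS6.9}(ii)--(iii), which guarantees that $\varphi\mapsto (F+\gamma B)(0_{\mathbb{R}^n},\varphi)$ is non-negative and increasing on each ball $\{\Vert\varphi\Vert_{L^p}\le M\}$ for a suitable $\gamma=\gamma(M)$.

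The one hypothesis still requiring genuine verification is the local Lipschitz condition, Assumption~\ref{ASS2.10}, which I would establish for the two components $\C$ and $\D$ of $F$ separately on a ball $\{\Vert\varphi\Vert_{L^p}\le\xi\}$. For the birth term I would split the difference as $\C(\varphi)-\C(\psi)=\int_0^\infty\beta(\Sigma_\varphi,a)[\varphi(a)-\psi(a)]\,da+\int_0^\infty[\beta(\Sigma_\varphi,a)-\beta(\Sigma_\psi,a)]\psi(a)\,da$, where $\Sigma_\varphi:=\int_0^\infty\sigma(r)\varphi(r)\,dr$. The first piece is bounded by $\Vert\beta(\Sigma_\varphi,\cdot)\Vert_{L^q}\Vert\varphi-\psi\Vert_{L^p}$ via H\"older's inequality, using $1/p+1/q=1$ and $\beta(\Sigma_\varphi,\cdot)\in\Li^q$. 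For the second piece I would use that $\beta$ is of class $\C^1$ in its first variable, hence Lipschitz in $\Sigma$ on the bounded set $\{|\Sigma|\le\Vert\sigma\Vert_{L^q}\xi\}$, together with $|\Sigma_\varphi-\Sigma_\psi|\le\Vert\sigma\Vert_{L^q}\Vert\varphi-\psi\Vert_{L^p}$ (again H\"older), to control it by a constant times $\Vert\varphi-\psi\Vert_{L^p}$. The death term $\D$ is treated identically, with $\mu\in\C^1(\mathbb{R}^n,\Li^\infty)$ and $\alpha\in\Li^q$ playing the roles of $\beta$ and $\sigma$; here the $\Li^\infty$ values of $\mu$ pair with $\varphi\in\Li^p$ to keep $\D(\varphi)\in\Li^p$ with a locally uniform Lipschitz constant.

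The main obstacle is exactly this Lipschitz estimate: the nonlinearity is a product of a modulating factor $\beta(\Sigma_\varphi,\cdot)$ or $\mu(\G_\varphi,\cdot)$ with $\varphi$, and that factor itself depends on $\varphi$ through a weighted integral, so the argument hinges on pairing the conjugate exponents $p,q$ correctly and on upgrading the $\C^1$ regularity of $\beta,\mu$ to Lipschitz bounds on bounded sets. Once Assumption~\ref{ASS2.10} is secured, Theorem~\ref{TH3.4} produces a non-negative, monotone maximal semiflow $U(t,0)$ on $X_0$.

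Finally I would translate back to the partial differential equation. Writing $x=(0_{\mathbb{R}^n},\varphi)$ and $y=(0_{\mathbb{R}^n},\phi)$ in $X_{0+}$, the partial order on $X_0=\{0_{\mathbb{R}^n}\}\times\Li^p$ restricts to the pointwise almost-everywhere order on $\Li^p_+$, and the identification $U(t,0)x=(0_{\mathbb{R}^n},u_\varphi(t,\cdot))$ means that the abstract inequality $0\le U(t,0)x\le U(t,0)y$ valid on $[0,\min(\tau_\varphi,\tau_\phi))$ reads in the second component as $0\le u_\varphi(t,\cdot)\le u_\phi(t,\cdot)$, which is the assertion.
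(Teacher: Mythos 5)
Your proposal is correct and follows essentially the same route as the paper: the paper likewise obtains Theorem~\ref{TH6.15} as a direct application of Theorem~\ref{TH3.4}, citing Magal and Ruan for Assumptions~\ref{ASS2.2}--\ref{ASS2.3}, Lemma~\ref{LE6.11} for the resolvent positivity in Assumption~\ref{ASS3.1}(i), and Assumption~\ref{ASS6.9}(ii)--(iii) for Assumption~\ref{ASS3.1}(ii). Your explicit H\"older-based verification of the local Lipschitz condition (Assumption~\ref{ASS2.10}) for $\C$ and $\D$ is a detail the paper leaves implicit, but it does not change the structure of the argument.
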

\textbf{Comparison principle:} Let $\varphi \in \W^{1,p}(\left(0,+\infty \right), \mathbb{R}^n)$ with $\varphi \geq 0$.
Let $w\in C^{1} ([0,\tau], \Li^p(\left(0,+\infty \right), \mathbb{R}^n) ) \cap C([0,\tau], \W^{1,p}(\left(0,+\infty \right), \mathbb{R}^n) )$ such that
$$
w(0,a)=\varphi(a), \text{ for almost every } a\geq 0.
$$
Then by setting
$$
v(t)=
\left(
\begin{array}{l}
0_{\mathbb{R}^n}\\
w (t,.)
\end{array}
\right)
$$
we have
$$
v'(t) \leq Av(t)+ F(v(t)), \ \forall t \in [0,\tau],
$$
{which} is equivalent to
$$
\left(
\begin{array}{c}
0_{\mathbb{R}^n}\\
\dfrac{\partial w (t,.)}{\partial t}
\end{array}
\right)
\leq
\left(
\begin{array}{c}
-w (t,0)\\
-\dfrac{\partial w (t,.)}{\partial a}
\end{array}
\right)
+
\left(
\begin{array}{c}
\C \left( w  (t,.)\right)\\
\D \left( w (t,.)\right)
\end{array}
\right), \ \forall t \in [0,\tau].
$$
By using Proposition  \ref{PROP4.3} and Proposition \ref{PROP4.4} we obtain the following result.

\begin{theorem}[Comparison Principle]~\\ \label{TH6.12} Let Assumption \ref{ASS6.9} be satisfied, $\varphi \in \W^{1,p}(\left(0,+\infty \right), \mathbb{R}^n)$ with $\varphi \geq 0$, and
$u_\varphi(t,a)$ be the solution of system \eqref{6.2} with initial value $\varphi$. Let $$
w \in C^{1} ([0,\tau], \Li^p(\left(0,+\infty \right), \mathbb{R}^n) ) \cap C([0,\tau], \W^{1,p}(\left(0,+\infty \right), \mathbb{R}^n) ).
$$

 \begin{itemize}
\item[{\rm(i)}] Assume in addition that $w(t,a)$ satisfies the following inequality for each $ t \in [0,\tau]$,
\begin{equation*}
\left\lbrace
\begin{array}{l}
\dfrac{\partial w(t,a)}{\partial t}+\dfrac{\partial w(t,a)}{\partial a}\leq \D \left( w(t,.)\right)(a)\\
w(t,0)\leq \C\left(w(t,.)\right),
\end{array}
\right.
\end{equation*}
$$
w(t,.) \geq 0, \forall t \in [0, \tau],
$$
and
$$
w(0,.) \leq \varphi.
$$
{Then}
$$
w(t,.)\leq u_\varphi(t,.), \ \forall t \in [0,\tau].
$$
\item[{\rm(ii)}] Assume in addition that $w(t,a)$ satisfies the following inequality
\begin{equation*}
\left\lbrace
\begin{array}{l}
\dfrac{\partial w(t,a)}{\partial t}+\dfrac{\partial w(t,a)}{\partial a}\geq \D\left(w(t,.)\right)(a)\\
w(t,0)\geq \C\left(w(t,.)\right), \ \forall t \in [0,\tau].
\end{array}
\right.
\end{equation*}
$$
w(t,.) \geq 0, \forall t \in [0, \tau],
$$
and
$$
w(0,.) \geq \varphi.
$$
Then
$$
w(t,.)\geq u_\varphi(t,.), \forall t \in [0,\tau].
$$
\end{itemize}
\end{theorem}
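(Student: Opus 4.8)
The plan is to reduce both assertions to the abstract differential-form comparison principles, Proposition~\ref{PROP4.3} and Proposition~\ref{PROP4.4}, applied to the reformulation already set up just before the statement. Recall that Assumption~\ref{ASS6.9} has been shown to imply Assumption~\ref{ASS3.1} for the operators $A$, $B$ and the nonlinearity $F$ of this model, so that Theorem~\ref{TH6.15} (monotonicity of the semiflow) and Propositions~\ref{PROP4.3}--\ref{PROP4.4} are all available, with positive cone $X_+=\mathbb{R}^n_+\times \Li^p_+$. Throughout I would identify $w(t,\cdot)$ with
$$
v(t):=\begin{pmatrix} 0_{\mathbb{R}^n}\\ w(t,\cdot)\end{pmatrix},
$$
and the initial datum $\varphi$ with $(0_{\mathbb{R}^n},\varphi)\in X_{0+}$.

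First I would check the regularity and sign requirements needed to invoke the propositions. Since $w\in C^{1}([0,\tau],\Li^p)\cap C([0,\tau],\W^{1,p})$ and $D(A)=\{0_{\mathbb{R}^n}\}\times \W^{1,p}$, the curve $v$ lies in $C^{1}([0,\tau],X)\cap C([0,\tau],D(A))$, and the hypothesis $w(t,\cdot)\ge 0$ gives $v(t)\ge 0$ in the product order. Next I would translate the boundary/PDE inequalities into a single abstract inequality. Using the definitions of $A$ and $F$,
$$
Av(t)+F(v(t))=\begin{pmatrix} -w(t,0)+\C(w(t,\cdot))\\ -\partial_a w(t,\cdot)+\D(w(t,\cdot))\end{pmatrix},\qquad v'(t)=\begin{pmatrix}0_{\mathbb{R}^n}\\ \partial_t w(t,\cdot)\end{pmatrix}.
$$
Comparing componentwise in $X_+$ shows that $v'(t)\le Av(t)+F(v(t))$ is exactly equivalent to the pair of hypotheses $\partial_t w+\partial_a w\le \D(w(t,\cdot))$ and $w(t,0)\le \C(w(t,\cdot))$ of part~(i); the reversed abstract inequality corresponds to the pair in part~(ii).

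For part~(i), I would apply Proposition~\ref{PROP4.3} with $x=v(0)=(0_{\mathbb{R}^n},w(0,\cdot))$ to obtain $v(t)\le U(t,0)v(0)$ on the common interval $[0,\min(\tau,\tau(0,v(0))))$ of existence. Because $w(0,\cdot)\le\varphi$ gives $0\le v(0)\le(0_{\mathbb{R}^n},\varphi)$, the monotonicity of the semiflow (Theorem~\ref{TH6.15}) yields $U(t,0)v(0)\le U(t,0)(0_{\mathbb{R}^n},\varphi)=u_\varphi(t,\cdot)$ on that interval, and chaining the two gives $w(t,\cdot)=v(t)\le u_\varphi(t,\cdot)$. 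For part~(ii), the same reformulation makes $v$ a supersolution with $v(0)\ge(0_{\mathbb{R}^n},\varphi)$; Proposition~\ref{PROP4.4} gives both $\tau(0,v(0))>\tau$ and $v(t)\ge U(t,0)v(0)$. The maximal-time comparison lemma, valid because the product cone $\mathbb{R}^n_+\times\Li^p_+$ is normal, together with $(0_{\mathbb{R}^n},\varphi)\le v(0)$, then yields $\tau_\varphi\ge\tau(0,v(0))>\tau$, so $u_\varphi$ is defined on all of $[0,\tau]$; monotonicity gives $U(t,0)v(0)\ge U(t,0)(0_{\mathbb{R}^n},\varphi)=u_\varphi(t,\cdot)$, and chaining produces $w(t,\cdot)=v(t)\ge u_\varphi(t,\cdot)$.

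The verification that $A$ and $F$ turn the PDE data into the abstract inequality is routine bookkeeping. The only genuinely delicate point is the treatment of the initial-data gap: Propositions~\ref{PROP4.3}--\ref{PROP4.4} compare $v$ with the semiflow started at $v(0)$, whereas the conclusion compares $w$ with $u_\varphi$, started at $\varphi$. Bridging this gap is precisely where the monotonicity of the semiflow is indispensable, and for the existence-time claim in part~(ii) one additionally needs the normality of the cone to invoke the maximal-time comparison lemma. I would therefore be careful to record that $\mathbb{R}^n_+\times\Li^p_+$ is normal, so that the full interval $[0,\tau]$ in part~(ii) is legitimate.
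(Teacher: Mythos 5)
Your proof is correct and follows essentially the same route as the paper, which likewise reduces the statement to Propositions~\ref{PROP4.3} and \ref{PROP4.4} via the identification of $w(t,\cdot)$ with $v(t)=(0_{\mathbb{R}^n},w(t,\cdot))$ and the componentwise translation of the abstract inequality. Your explicit handling of the initial-data gap ($w(0,\cdot)\leq\varphi$ rather than equality) via Theorem~\ref{TH6.15}, and your remark on the normality of $\mathbb{R}^n_+\times \Li^p_+$ for the existence-time claim in part~(ii), simply spell out steps the paper leaves implicit.
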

\textbf{Increasing and Decreasing solutions:} Let $\varphi \in \W^{1,p}(\left(0,+\infty \right), \mathbb{R}^n)$ with $\varphi \geq 0$. Then
$$
x=
\left(
\begin{array}{c}
0_{\mathbb{R}^n} \\
\varphi
\end{array}
\right)
\in D(A),
$$
and we have
$$
Ax+F(x) \geq 0 \Leftrightarrow
\left(
\begin{array}{c}
-\varphi(0)\\
-\varphi^\prime
\end{array}
\right)
+
\left(
\begin{array}{c}
\C\left(\varphi\right)\\
\D\left(\varphi\right)
\end{array}
\right) \geq 0
$$
therefore by applying Theorem \ref{TH5.1}  we obtain the following result.

\begin{theorem}[Increasing and Decreasing Solutions] ~\\ \label{TH6.13}
Let Assumption \ref{ASS6.9} be satisfied; $\varphi \in \W^{1,p}(\left(0,+\infty \right), \mathbb{R}^n)$ with $\varphi \geq 0$, and $u_\varphi(t,a)$ be the solution of system \eqref{6.2} with initial value $\varphi$. Then we have the following alternative
 \begin{itemize}
\item[{\rm(i)}] Assume in addition that $\varphi^\prime \leq \D\left(\varphi\right)$ and $\varphi(0) \leq \C\left(\varphi\right)$ then $t \to u_\varphi(t,a)$ is increasing.
 \item[{\rm(ii)}] Assume in addition that $\varphi^\prime \geq \D\left(\varphi\right)$ and $\varphi(0) \geq \C\left(\varphi\right)$ then $t \to u_\varphi(t,a)$ is decreasing.
 \end{itemize}
\end{theorem}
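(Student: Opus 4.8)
The plan is to deduce both statements directly from the abstract result on increasing and decreasing solutions, Theorem~\ref{TH5.1}, so the real work is reduced to verifying that theorem's hypotheses in the present concrete setting and to translating its abstract sign condition into the stated pointwise inequalities. First I would record that the structural assumptions required by Theorem~\ref{TH5.1} all hold here: Assumptions~\ref{ASS2.2} and \ref{ASS2.3} hold for this $L^p$ age-structured operator $A$ (as recalled just above for this class of examples), Assumption~\ref{ASS2.10} is the Lipschitz property of $F$, and Assumption~\ref{ASS3.1} was already checked via Lemma~\ref{LE6.11} together with Assumption~\ref{ASS6.9} in the discussion preceding Theorem~\ref{TH6.15}. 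Since $F$ is time-independent, the generated semiflow is autonomous, which is exactly the framework of Theorem~\ref{TH5.1}.

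Next I would set $x := \left(\begin{array}{c} 0_{\mathbb{R}^n} \\ \varphi \end{array}\right)$ and verify the standing hypothesis $x \in D(A) \cap X_+$. Because $\varphi \in \W^{1,p}$ we have $x \in D(A) = \{0_{\mathbb{R}^n}\} \times \W^{1,p}$, and because $\varphi \geq 0$ we have $x \in X_+ = \mathbb{R}_+^n \times \Li^p_+$; this is precisely why the $\W^{1,p}$ regularity is imposed on $\varphi$ rather than mere $\Li^p$ membership. Under the identification $v(t) = \left(\begin{array}{c} 0_{\mathbb{R}^n} \\ u_\varphi(t,\cdot) \end{array}\right)$, the abstract orbit $U(t)x$ coincides with the solution $u_\varphi(t,\cdot)$ of \eqref{6.2}, so monotonicity of $t \mapsto U(t)x$ is exactly monotonicity of $t \mapsto u_\varphi(t,\cdot)$.

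The heart of the argument is then the coordinate-wise translation of the abstract condition. Computing
$$
Ax + F(x) = \left(\begin{array}{c} -\varphi(0) \\ -\varphi' \end{array}\right) + \left(\begin{array}{c} \C(\varphi) \\ \D(\varphi) \end{array}\right) = \left(\begin{array}{c} \C(\varphi) - \varphi(0) \\ \D(\varphi) - \varphi' \end{array}\right),
$$
and recalling that the positive cone is $X_+ = \mathbb{R}_+^n \times \Li^p_+$, the condition $Ax + F(x) \geq 0$ is equivalent to the two inequalities $\varphi(0) \leq \C(\varphi)$ in $\mathbb{R}^n$ and $\varphi' \leq \D(\varphi)$ in $\Li^p$, which are exactly the hypotheses of part~(i); applying Theorem~\ref{TH5.1}(i) then gives that $t \mapsto u_\varphi(t,\cdot)$ is increasing. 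Part~(ii) follows identically with all inequalities reversed, using $Ax + F(x) \leq 0$ equivalent to $\{\varphi(0) \geq \C(\varphi),\ \varphi' \geq \D(\varphi)\}$ together with Theorem~\ref{TH5.1}(ii).

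I expect no genuine obstacle here, since the decisive content is already packaged in Theorem~\ref{TH5.1} (which itself rests on the differential-form comparison Proposition~\ref{PROP4.3}). The only points requiring care are bookkeeping ones: reading the scalar birth inequality and the $\Li^p$ transport inequality off the correct coordinates of the product cone, and confirming that the $\W^{1,p}$ hypothesis does place $x$ in $D(A)$ so that the differential-form comparison machinery underlying Theorem~\ref{TH5.1} is indeed applicable.
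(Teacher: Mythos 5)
Your proposal is correct and follows essentially the same route as the paper: the paper likewise sets $x=\left(\begin{smallmatrix}0_{\mathbb{R}^n}\\ \varphi\end{smallmatrix}\right)\in D(A)$, observes that $Ax+F(x)\geq 0$ is equivalent to the pair of coordinate inequalities $\varphi(0)\leq \C(\varphi)$ and $\varphi'\leq \D(\varphi)$ in the product cone, and then invokes Theorem~\ref{TH5.1} (with the hypotheses of that theorem already verified for this $\Li^p$ age-structured setting via Lemma~\ref{LE6.11} and Assumption~\ref{ASS6.9}). Your additional bookkeeping about $x\in D(A)\cap X_+$ and the identification of $U(t)x$ with $u_\varphi(t,\cdot)$ is exactly what the paper leaves implicit.
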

\begin{example}\label{EX6.3} Assume that $n=1$ and assume for simplicity that
$$
\mu(\G,a)=-\mu_0(a)\;  \chi(\G),
$$
with
$$
\mu_0 \in \Li^{\infty}_+(\left(0,+\infty \right), \mathbb{R}), \text{ and } h(\mathbb{R}_+) \subset \mathbb{R}_+.
$$
Then the map
$$
(\gamma I+\D)(u)(a)= \left[\gamma -\mu_0(a) \chi(\int_0^{+\infty}  \alpha(r)u(r)dr)\right]u(a)
$$
will {be} monotone increasing for $\gamma>0$ large enough if the map $\chi$  is decreasing on $[0, +\infty)$. Indeed whenever $\chi$ is decreasing on $[0, +\infty)$ the condition (iii) in {Assumption~\ref{ASS6.9.0}
and Assumption~\ref{ASS6.9} will} be satisfied for each $\gamma> \Vert \mu_0 \Vert_{L^\infty}\chi(0)$.
\end{example}

\begin{remark} To clarify the presentation,  we did not {consider the non-autonomous case in this subsection.} But our results can be {applied, for example, to the} following non-autonomous age structured model
\begin{equation*}
\left\lbrace
\begin{array}{l}
\partial_t u(t,a)+\partial_a u(t,a)=\mu(\G(t) , t ,a)u(t,a), \text{{\rm for }} a \in (0,\infty) \\
u(t,0)=\int_0^{+\infty} \beta(\Sigma(t), t ,a)u(t,a)da \\
\frac{d \V(t)}{dt}= \F(\V(t),\G(t))\\
u(0,.)=u_0 \in \Li^p_+(\left(0,+\infty \right), \mathbb{R}^n) \text{ and }V(0)=V_0 \in \mathbb{R}^n
\end{array}
\right.
\end{equation*}
where the quantities $\G(t)$ and $\Sigma(t)$ are defined as above.
\end{remark}

\end{document}